\newtheorem{thm}{Theorem}[section]
\newtheorem{cor}[thm]{Corollary}
\newtheorem{lem}[thm]{Lemma}
\newtheorem{prop}[thm]{Proposition}
\theoremstyle{definition}
\newtheorem{defn}[thm]{Definition}
\newtheorem{qn}[thm]{Question}
\theoremstyle{remark}
\newtheorem{rem}[thm]{Remark}
\newtheorem{ex}[thm]{Example}
\numberwithin{equation}{section}
\newtheorem*{idea}{Idea of the proof}
\newcommand{\spn}[1]{\langle#1\rangle}
\newcommand{\To}{\longrightarrow}
\DeclareMathOperator{\im}{im}
\DeclareMathOperator{\PD}{PD} 
\newcommand{\ol}{\overline}
\newcommand{\Bl}{\text{Bl}}
\newcommand{\SFH}{\mathit{SFH}}
\newcommand{\HFh}{\widehat{\mathit{HF}}}
\newcommand{\HFKh}{\widehat{\mathit{HFK}}}
\newcommand{\CFKh}{\widehat{\mathit{CFK}}}
\newcommand{\de}{\partial}
\newcommand{\CFh}{\widehat{\mathit{CF}}}
\newcommand{\A}{\mathbf{A}}
\newcommand{\gr}{\mathit{gr}}
\newcommand{\agr}{\td\gr}
\renewcommand{\bar}[1]{\overline{#1}}
\renewcommand{\hat}[1]{\widehat{#1}}
\def\a{\alpha}
\def\b{\beta}
\def\g{\gamma}
\def\d{\delta}
\def\e{\varepsilon}
\def\S{\Sigma}
\def\s{\mathfrak{s}}
\def\rs{\s^\circ}
\def\bolda{\boldsymbol{\alpha}}
\def\boldb{\boldsymbol{\beta}}
\def\boldd{\boldsymbol{\delta}}
\def\ab{{\bolda,\boldb}}
\def\W{\mathcal{W}}
\def\Ws{\mathcal{W}^s}
\def\Wb{\mathcal{W}^b}
\def\X{\mathcal{X}}
\def\Int{\text{Int}}
\def\A{\mathcal{A}}
\def\Z{\mathbb{Z}}
\def\Q{\mathbb{Q}}
\def\R{\mathbb{R}}
\def\T{\mathbb{T}}
\def\F{\mathcal{F}}
\def\FF{\mathbb{F}}
\def\CC{\mathcal{C}}
\def\NN{\mathbb{N}}
\def\D{\mathcal{D}}
\def\P{\mathcal{P}}
\def\cT{\mathcal{T}}
\def\L{\mathbb{L}}
\def\H{\mathcal{H}}
\def\BSut{\mathbf{BSut}}
\def\Vect{\mathbf{Vect}_{\mathbb{F}_2}}
\def\x{\mathbf{x}}
\def\y{\mathbf{y}}
\def\spinc{\text{Spin}^c}
\def\bP{\mathbb{P}}
\def\bS{\mathbb{S}}
\def\Id{\text{Id}}
\def\td{\widetilde}
\def\de{\partial}
\def\mb{\mathbb}
\def\mc{\mathcal}
\def\bz{\mathbf{z}}
\newcommand{\vsimeq}{\rotatebox[origin=c]{-90}{\footnotesize $\backsimeq$}}
\begin{document}

\title{Concordance maps in knot Floer homology}%

\author{Andr\'as Juh\'asz}%
\address{Mathematical Institute, University of Oxford, Andrew Wiles Building,
Radcliffe Observatory Quarter, Woodstock Road, Oxford, OX2 6GG, UK}%
\email{juhasza@maths.ox.ac.uk}%
\thanks{AJ was supported by a Royal Society Research Fellowship.}

\author{Marco Marengon}%
\address{Department of Mathematics, Imperial College London,
180 Queen's Gate, London SW7 2AZ, UK}%
\email{m.marengon13@imperial.ac.uk}%
\thanks{MM was supported by an EPSRC Doctoral Training Award.}

\subjclass[2010]{57M27; 57R58}%
\keywords{Concordance; Knot Floer homology; Genus}

\date{\today}%
\begin{abstract}
We show that a decorated knot concordance~$\CC$ from~$K$ to~$K'$ induces a homomorphism~$F_\CC$
on knot Floer homology that preserves the Alexander and Maslov gradings.
Furthermore, it induces a morphism of the spectral sequences to $\HFh(S^3) \cong \Z_2$
that agrees with $F_\CC$ on the $E^1$ page and is the identity on the $E^\infty$ page.
It follows that $F_\CC$ is non-vanishing on $\HFKh_0(K, \tau(K))$.
We also obtain an invariant of slice disks in homology 4-balls bounding~$S^3$.

If~$\CC$ is invertible, then~$F_\CC$ is injective,
hence
\[
\dim \HFKh_j(K,i) \le \dim \HFKh_j(K',i)
\]
for every~$i$, $j \in \Z$.
This implies an unpublished result of Ruberman
that if there is an invertible concordance from the knot~$K$ to~$K'$,
then $g(K) \le g(K')$, where~$g$ denotes the Seifert genus.
Furthermore, if~$g(K) = g(K')$ and~$K'$ is fibred, then so is~$K$.
\end{abstract}

\maketitle
\section{Introduction}

Knot Floer homology was introduced independently by Ozsv\'ath-Szab\'o~\cite{OSz3} and Rasmussen~\cite{Ras},
and the first author~\cite{cob} defined maps induced on it by decorated knot cobordisms.
Given a knot~$K$ in~$S^3$, its knot Floer homology with $\Z_2$ coefficients is a finite dimensional bigraded $\Z_2$ vector space
\[
\bigoplus_{i, j \in \Z} \HFKh_j(K,i),
\]
well-defined up to isomorphism, where~$i$ is called the Alexander grading and~$j$ is the homological grading.
The Euler characteristic of $\HFKh_*(K,i)$ is the $i$-th coefficient of the symmetrized Alexander polynomial of~$K$,
and hence knot Floer homology can be viewed as a categorification of the Alexander polynomial.
First, we recall \cite[Definition~4.1]{cob}.

\begin{defn} \label{defn:link}
For $i \in \{0,1\}$, let $Y_i$ be a connected, oriented 3-manifold, and let~ $L_i$ be a non-empty link in~$Y_i$.
Then a \emph{link cobordism} from $(Y_0,L_0)$ to $(Y_1,L_1)$ is a pair~$(X,F)$, where
\begin{enumerate}
\item $X$ is a connected, oriented cobordism from $Y_0$ to $Y_1$,
\item $F$ is a properly embedded, compact, orientable surface in $X$, and
\item $\partial F = L_0 \cup L_1$.
\end{enumerate}
\end{defn}

Knots~$K_0$ and~$K_1$ in~$S^3$ are said to be \emph{concordant} if there is a cobordism~$(X,F)$
from~$(S^3,K_0)$ to~$(S^3,K_1)$ such that~$X = S^3 \times I$ and~$F$ is diffeomorphic to~$S^1 \times I$.
In this case, we call~$(X,F)$ a \emph{concordance} from~$K_0$ to~$K_1$.
In this paper, we also allow more general concordances where $X$
is a cobordism from $S^3$ to $S^3$ such that $H_1(X) = H_2(X) = 0$.

In this paper, a \emph{decorated knot} is a pair $(K,P)$ such that $K$ is a knot,
$P$ is a pair of points in~$K$, and we are given a decomposition of~$K$ into
compact $1$-manifolds~$R_+(P)$ and~$R_-(P)$ such that $R_+(P) \cap R_-(P) = P$.
Given decorated knots $(K_0,P_0)$ and $(K_1,P_1)$ in~$S^3$,
a \emph{decorated concordance} from $(K_0,P_0)$ to $(K_1,P_1)$
is a triple $(X,F,\sigma)$ such that $(X,F)$ is a concordance from $K_0$
to $K_1$, and $\sigma$ consists of two disjoint, properly embedded arcs in~$F$,
one connecting $R_+(K_0)$ and $R_+(K_1)$, the other $R_-(K_0)$ and $R_-(K_1)$.

Dylan Thurston and the first author~\cite{naturality} showed that knot Floer homology is
natural for decorated knots, and Sarkar~\cite{basepoint} proved that moving the basepoints~$P$
around the knot induces a non-trivial automorphism in many cases.
Hence only decorated concordances induce maps on knot Floer homology.

Recall that \cite[Lemma~3.6]{OSz3},
for every decorated knot~$(K,P)$ in~$S^3$, there is a corresponding spectral
sequence
\[
\HFKh(K,P) \Rightarrow \HFh(S^3) \cong \Z_2.
\]
Given an admissible doubly-pointed Heegaard diagram $(\S,\bolda,\boldb,w,z)$ for $(K,P)$,
the singly-pointed diagram $(\S,\bolda,\boldb,w)$ represents $(S^3,w)$,
and~$z$ gives rise to the knot filtration on $\CFh(\S,\bolda,\boldb,w)$.
The spectral sequence arises from this filtered complex.
The $E^0$ page is the associated graded complex $\CFKh(\S,\bolda,\boldb,w,z)$, whose
homology is $\HFKh(K,P)$, the $E^1$ page. The spectral sequence limits
to the homology of~$\CFh(\S,\bolda,\boldb,w)$, which is $\HFh(S^3) \cong \Z_2$.
The filtration level of the generator of~$\Z_2$ in the $E^\infty$ page is the
Ozsv\'ath-Szab\'o $\tau$ invariant~\cite{OSz14}, denoted by $\tau(K)$.

The main result of this paper is that a decorated concordance~$\CC$ induces
a non-vanishing homomorphism~$F_\CC$ on knot Floer homology that preserves the Alexander
and homological gradings, and also induces a morphism of the corresponding spectral
sequences.
The map $F_\CC$ is functorial and depends only on the decorated concordance $\CC$,
while the chain map $f_\CC$ (or even its filtered homotopy type)
need not be functorial, and it can depend on auxiliary data other than $\CC$.

\begin{thm} \label{thm:splitting}
Let $(K_0,P_0)$ and $(K_1,P_1)$ be decorated knots in~$S^3$, and let~$\CC = (X,F,\sigma)$ be a
decorated concordance between them such that $H_1(X) = H_2(X) = 0$.
Then
\[
F_\CC \left(\HFKh_j(K_0,P_0,i) \right) \le \HFKh_j(K_1,P_1,i)
\]
for every $i$, $j \in \Z$.

Furthermore, given an admissible diagram $(\S_r,\bolda_r,\boldb_r,w_r,z_r)$ of $(K_r,P_r)$ for $r \in \{0,1\}$,
there is a filtered chain map
\[
f_\CC \colon \CFh(\S_0,\bolda_0,\boldb_0,w_0) \to \CFh(\S_1,\bolda_1,\boldb_1,w_1)
\]
of homological degree zero such that the induced morphism of spectral sequences agrees with $F_\CC$ on the $E^1$
page and with $\Id_{\Z_2}$ on the total homology and on the $E^\infty$ page.
\end{thm}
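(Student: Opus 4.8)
The plan is to build the concordance map $F_\CC$ by decomposing the concordance into elementary pieces, using the knot cobordism maps from \cite{cob}. The key observation is that a decorated concordance $(X,F,\sigma)$ with $H_1(X) = H_2(X) = 0$, after puncturing, gives rise to a decorated link cobordism in $(S^3 \times I)$ (or a homology cobordism) that can be presented by a sequence of elementary moves: births, deaths, band attachments, and stabilizations/destabilizations of the Heegaard data. Since $F$ is an annulus $S^1 \times I$ and $X$ has trivial $H_1$ and $H_2$, every Spin$^c$ structure on $X$ restricts trivially, so the cobordism map is a sum over a single Spin$^c$ structure (or we take the total map); this is what makes the Alexander and Maslov gradings behave well.

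**First I would** recall from \cite{cob} that a decorated knot cobordism induces a filtered chain map, and verify that for a concordance the induced map has homological degree zero and preserves the Alexander grading on the associated graded level—this follows from the grading-shift formula in \cite{cob} together with the vanishing of the relevant characteristic-class and Euler-characteristic terms, since $F$ is an annulus and $[F]$ is trivial in $H_2(X, \partial X)$. Then I would choose admissible diagrams $(\S_r, \bolda_r, \boldb_r, w_r, z_r)$ for $r \in \{0,1\}$ and realize $f_\CC$ as the composite of the elementary maps associated to a movie presentation of $\CC$, each of which is a filtered chain map on $\CFh$ (forgetting $z$) that respects the $z$-induced filtration. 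The induced map on the $E^1 = \HFKh$ page is then $F_\CC$ by definition/construction.

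**The main obstacle** will be identifying the map on the total homology $\HFh(S^3) \cong \Z_2$ and on the $E^\infty$ page with the identity. For this I would argue that the underlying (unfiltered) cobordism map $\CFh(\S_0,\bolda_0,\boldb_0,w_0) \to \CFh(\S_1,\bolda_1,\boldb_1,w_1)$ is, up to filtered chain homotopy, the cobordism map associated to the 4-manifold $X$ in the hat-version of Heegaard Floer homology—here one must check that forgetting the knot $F$ and the decoration recovers the ordinary $\CFh$ cobordism map, which uses that $F$ is unknotted in each cross-section and that the band moves and births/deaths assemble correctly. Since $X$ is a homology cobordism from $S^3$ to $S^3$, the induced map $\HFh(S^3) \to \HFh(S^3)$ is an isomorphism of one-dimensional $\Z_2$ vector spaces, hence the identity; compatibility of the spectral sequence morphism with this then forces the $E^\infty$ map to be the identity as well, and in particular $F_\CC$ is non-vanishing on the subquotient $\HFKh_0(K_0, \tau(K_0))$ that survives to $E^\infty$.

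**Finally,** functoriality and independence of $F_\CC$ from the auxiliary data (choice of diagrams, movie presentation, Heegaard moves) would be reduced to the corresponding naturality statements for the elementary cobordism maps in \cite{cob} and the naturality of knot Floer homology under change of diagrams from \cite{naturality}; the point is that while $f_\CC$ and even its filtered homotopy type may depend on choices, the induced map on homology of the associated graded—which is canonically identified with $\HFKh(K_r, P_r)$—does not. I expect the grading computation and the identification with the ordinary $\CFh$ cobordism map to be where the real work lies; the rest is assembling known functoriality.
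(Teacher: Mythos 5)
Your overall strategy (decompose the complementary cobordism, define $f_\CC$ as a composite of elementary filtered chain maps, identify the $E^1$ map with $F_\CC$ and the total homology map with the map induced by $X$ on $\HFh(S^3)$) is the same as the paper's, but the two places you defer to known results are exactly where the paper has to do its real work, and as written both steps have genuine gaps. First, there is no ``grading-shift formula in \cite{cob}'' that gives Alexander-filtration and Maslov-degree preservation for the chain-level handle maps; proving this is the technical core of the paper. The Alexander grading is governed by \emph{relative} $\spinc$ structures on the complement of $F$, and $\spinc(\W)$ is an affine space over $H^2(W,Z)\cong\Z$, so the uniqueness of the $\spinc$ structure on $X$ is not what makes the grading behave: one needs that every relative $\spinc$ structure on $\W$ evaluates equally on Seifert surfaces at the two ends (Lemma~\ref{lem:samespinc}, which uses the annulus structure of $F$ to see that $[S_0]=[S_1]$ in $H_2(W,Z)$), and, at the chain level for the $2$-handle and naturality triangle maps, that the grading shift is constant and zero. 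The latter requires showing that every triply periodic domain is a sum of doubly periodic ones (Proposition~\ref{prop:triplyperiodic}, which uses $H_1(X)=H_2(X)=0$), realizing suitable $\spinc$ structures by explicit small triangles after isotoping the $\a$-curves (Lemma~\ref{lem:H1Sigma} and Lemma~\ref{lem:replacepsi}), and then invoking Lemma~\ref{lem:samespinc}; see Lemma~\ref{lem:2handles1}. Your appeal to ``vanishing of characteristic-class and Euler-characteristic terms because $F$ is a null-homologous annulus'' is the right heuristic but is not a proof available by citation in this setting. (Also, the maps of \cite{cob} are defined via the contact gluing map and a handle decomposition of $\Bl_F(X)$, not via a movie of $F$; if you build $f_\CC$ from a movie presentation you incur the extra task of identifying its $E^1$ map with the $F_\CC$ of \cite{cob}, and for $X\neq S^3\times I$ a movie of the surface alone does not capture the handles of $X$.)

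Second, the step ``the total homology map is an isomorphism, hence the $E^\infty$ map is the identity'' is precisely the fallacy the paper warns against after Theorem~\ref{thm:splitting}: a filtered map inducing an isomorphism on total homology can induce the zero map on $E^\infty$ (e.g.\ an isomorphism $\Z_2\to\Z_2$ shifting the filtration level down). To conclude the $E^\infty$ statement one needs the surviving generators to sit in the same filtration level, i.e.\ $\tau(K_0)=\tau(K_1)$, which the paper gets from \cite[Theorem~1.1]{OSz14} (applicable because $K_0\#\ol{K_1}$ bounds a disk in a homology $4$-ball), and then an argument as in Lemma~\ref{lem:review}. Relatedly, that the homology cobordism $X$ induces an isomorphism on $\HFh(S^3)$ is not automatic from dimension count (the map could a priori vanish); it is \cite[Lemma~3.4]{OSz14}. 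Finally, degree-zero-ness of $f_\CC$ is also not free: the paper proves each handle map preserves the relative Maslov grading and then pins the absolute shift to zero using that the total homology map is $\Id_{\HFh(S^3)}$, which is degree-preserving. So the skeleton of your argument matches the paper, but the filtration/grading computations and the $\tau$-based $E^\infty$ identification must be supplied rather than cited.
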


Note that the fact that the map induced by a filtered map $f$
on the total homology is an isomorphism in general does not imply that the map $f^\infty$
induced between the $E^\infty$ pages is also an isomorphism. As an example, consider
a complex $C \cong \Z_2$ in filtration level one, and a complex $\bar C \cong \Z_2$
in filtration level zero. If $f \colon C \to \bar C$ is an isomorphism, then $H(f)$
is an isomorphism but $f^\infty$ is not.

In the case of the filtered map $f_\CC$ induced by a decorated concordance $\CC$,
the fact that $f_\CC^\infty$ is an isomorphism follows from the fact that $\tau(K_0) = \tau(K_1)$,
which was shown by Ozsv\'ath and Szab\'o~\cite[Theorem~1.1]{OSz14}.
An alternative proof of this can be given by observing that a decorated concordance
gives filtered maps both ways that induce isomorphisms on the total homology,
as in the proofs of \cite[Theorem~1]{s-invariant} and \cite[Theorem~3.4]{gridtau}.

The invariant $\tau(K)$ can also be defined as the smallest Alexander grading
of an element of $\HFKh(K,P)$ that represents a cycle on each page
of the spectral sequence, and whose homology class in the $E^\infty$ page is~$1$.
We denote the set of such elements by $A_1(K)$. Then we have the following
non-vanishing result for the knot concordance maps:

\begin{cor} \label{cor:non-vanishing}
Let $(K_0,P_0)$ and $(K_1,P_1)$ be decorated knots in~$S^3$, and suppose that $\CC = (X,F,\sigma)$ is a
decorated concordance between them. Let $\tau=\tau(K_0) = \tau(K_1)$. Then, the map
\[
F_\CC \colon \HFKh_0(K_0,P_0,\tau) \to \HFKh_0(K_1,P_1,\tau)
\]
is non-zero, and $F_\CC(A_1(K_0)) \subseteq A_1(K_1)$.
\end{cor}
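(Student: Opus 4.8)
The plan is to deduce the corollary directly from Theorem~\ref{thm:splitting}, whose hypotheses hold automatically here: since $\CC$ is a decorated concordance, its underlying cobordism $X$ satisfies $H_1(X) = H_2(X) = 0$ by definition. Thus Theorem~\ref{thm:splitting} supplies a filtered chain map $f_\CC$ of homological degree zero whose induced morphism of spectral sequences $\{f_\CC^r\}$ agrees with $F_\CC$ on the $E^1$ page $\HFKh(K_0,P_0)$ and with $\Id_{\Z_2}$ on the $E^\infty$ page. The only extra input I would use is the shape of the target spectral sequence: it abuts to $\HFh(S^3) \cong \Z_2$, which is supported in Maslov grading $0$, so the surviving class on $E^\infty$ lives in Maslov grading $0$ and, by the definition of $\tau$, in Alexander filtration level $\tau$. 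Since every differential $d^r$ lowers the Maslov grading by $1$ and lowers the Alexander grading, a permanent cycle whose $E^\infty$-class is the generator must itself lie in Maslov grading $0$ and Alexander grading $\tau$; hence $A_1(K_r) \subseteq \HFKh_0(K_r,P_r,\tau)$ for $r \in \{0,1\}$. In particular the restriction $F_\CC \colon \HFKh_0(K_0,P_0,\tau) \to \HFKh_0(K_1,P_1,\tau)$ in the statement is well defined, by the grading part of Theorem~\ref{thm:splitting}.

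For the core argument I would fix $a \in A_1(K_0)$, regarded as a class in $E^1(K_0) = \HFKh(K_0,P_0)$ that survives to every page and whose image in $E^\infty(K_0) \cong \Z_2$ is the generator. Because $\{f_\CC^r\}$ is a morphism of spectral sequences, each $f_\CC^r$ commutes with $d^r$ and the maps are compatible from one page to the next; so, inductively, $F_\CC(a) = f_\CC^1(a)$ is again a permanent cycle, and its image in $E^\infty(K_1)$ is $f_\CC^\infty$ applied to the image of $a$ in $E^\infty(K_0)$, namely $\Id_{\Z_2}(1) = 1$. Thus $F_\CC(a)$ is a permanent cycle with $E^\infty$-class equal to $1$, which is exactly the condition defining membership in $A_1(K_1)$; this proves $F_\CC(A_1(K_0)) \subseteq A_1(K_1)$. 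Moreover, since its $E^\infty$-class is nonzero, $F_\CC(a)$ is already nonzero in $\HFKh_0(K_1,P_1,\tau)$.

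It then remains only to observe that $A_1(K_0)$ is non-empty — this is precisely what the characterisation of $\tau$ as the smallest Alexander grading carrying a permanent cycle with $E^\infty$-class $1$ asserts — so the previous paragraph exhibits an element of $\HFKh_0(K_0,P_0,\tau)$ on which $F_\CC$ does not vanish, completing the proof. I do not expect a serious obstacle here: the substantive work is entirely contained in Theorem~\ref{thm:splitting}, and what is left is formal spectral-sequence bookkeeping. The one point that needs care is verifying that "being a permanent cycle" is preserved by a morphism of spectral sequences and that this morphism genuinely induces the stated map on $E^\infty$, so that the identity $f_\CC^\infty = \Id_{\Z_2}$ can legitimately be fed the class of $a$; everything else is formal.
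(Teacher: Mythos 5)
The step that fails is the claim that a permanent cycle whose $E^\infty$-class is the generator must itself be concentrated in Maslov grading $0$ and Alexander grading $\tau$, so that $A_1(K) \subseteq \HFKh_0(K,P,\tau)$. This is false in general: $A_1(K)$ is an affine translate of $A_0(K)$, and $A_0(K)$ typically contains non-zero elements in other bidegrees, so adding such an element to a bihomogeneous representative produces a non-homogeneous element of $A_1(K)$. Concretely, for the right-handed trefoil ($\tau = 1$) the generator of $\HFKh_{-2}(K,-1)$ is a cycle on every page whose class dies at $E^2$ (it is hit by $d^1$ from the generator in Alexander grading $0$), hence it lies in $A_0(K)$; adding it to the generator of $\HFKh_0(K,1)$ gives an element of $A_1(K)$ that does not lie in $\HFKh_0(K,1)$. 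The fact that every $d^r$ lowers the Maslov grading and the filtration only shows that $E^\infty$ is concentrated in bidegree $(\tau,0)$; it forces the \emph{limit classes} of the other components of a permanent cycle to vanish, not the components themselves. This gap matters for your non-vanishing argument: to show the restricted map $\HFKh_0(K_0,P_0,\tau) \to \HFKh_0(K_1,P_1,\tau)$ is non-zero you must feed it an element of its actual domain, and from an arbitrary $a \in A_1(K_0)$ you cannot conclude that $a$ (or $F_\CC(a)$) lies in the Alexander-$\tau$, Maslov-$0$ summand.

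The repair is small and is exactly what the paper does. Since each $d^r$ with $r \ge 1$ is homogeneous of Maslov degree $-1$ and strictly decreases the Alexander filtration, the bihomogeneous components of a permanent cycle are again permanent cycles, and because the generator of $\HFh(S^3)$ has Maslov grading $0$ and filtration level $\tau$, the component of $a \in A_1(K_0)$ in bidegree $(\tau,0)$ still has limit class $1$. Hence $A_1'(K_0) = A_1(K_0) \cap \HFKh_0(K_0,P_0,\tau) \neq \emptyset$, which is Equation~\eqref{eqn:Aandtau} of the paper; applying your second paragraph to $x \in A_1'(K_0)$, together with Proposition~\ref{prop:homogeneous} and the preservation of the homological grading from Theorem~\ref{thm:splitting}, gives $F_\CC(x) \in A_1(K_1) \cap \HFKh_0(K_1,P_1,\tau)$, which is non-zero. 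With this substitution your argument coincides with the paper's proof of Corollary~\ref{cor:nonvanishing}: the containment $F_\CC(A_1(K_0)) \subseteq A_1(K_1)$ via the morphism of spectral sequences of Theorem~\ref{thm:spectral} (your second paragraph is correct and just spells out what the paper cites), plus grading preservation, plus the non-emptiness of $A_1'$.
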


In fact, for any decorated knot $(K,P)$ in $S^3$, we shall see that
\[
A_1'(K) := A_1(K) \cap \HFKh_0(K,P,\tau(K)) \neq \emptyset,
\]
and the map $F_\CC \colon A_1'(K_0) \to A_1'(K_1)$ is non-zero.

Let $B$ be an integral homology 4-ball with boundary~$S^3$.
Suppose that $S \subset B$ is a slice disk for the decorated knot~$(K,P)$ in~$S^3$.
If we remove a ball from $B$ about a point of $S$, we obtain a concordance~$\CC(S)$
from the unknot $U$ to $K$. By Lemma~\ref{lem:U}, the element
\[
t_{S,P} := F_{\CC(S)}(1) \in \HFKh_0(K,P,0)
\]
is independent of what decoration we choose on~$\CC(S)$.
It is non-zero by Corollary~\ref{cor:non-vanishing},
and is an invariant of the surface~$S$ up to isotopy in $B$ fixing $K$.

\begin{qn}
Can $t_{S,P}$ distinguish different slice disks?
More precisely, is there a decorated knot $(K,P)$ in $S^3$ that has two different slice disks $S$ and $S'$
in $D^4$ such that $t_{S,P} \neq t_{S',P}$?
\end{qn}

Note that, given different decorations $P$ and $P'$ on $K$, the basepoint moving map of Sarkar~\cite{basepoint}
takes $t_{S,P}$ to $t_{S,P'}$, so the answer is independent of the choice of basepoints.

We can use the above viewpoint to refine the approach of Freedman, Gompf, Morrison, and Walker~\cite{mnm}
for disproving the smooth 4-dimensional Poincar\'e conjecture (SPC4).
Suppose that we are given a counterexample to SPC4 with no 3-handles and a single 4-handle.
Removing the 4-handle, we obtain an exotic 4-ball $B$ with boundary homeomorphic to $S^3$.
The belt circles of the 2-handles give a link $L \subset \partial B$, and the cocores
of the 2-handles give a collection of disks $C \subset B$ with boundary~$L$.
If we band sum the components of~$L$ in some way, we obtain a knot~$K \subset \partial B$,
together with a disk $D \subset B$ obtained from $C$.
Hence $D$ induces an element $t_{D,P} \in \HFKh(K,P)$ for any decoration~$P$.
If $t_{D,P} \neq t_{S,P}$ for~$S$ an arbitrary slice disk of $K$, then this implies that $B$ is indeed exotic.

The approach of Freedman et al.~only works if $K$ is not slice in the standard 4-ball,
but it is in the homotopy 4-ball~$B$.
By the work of Ozsv\'ath and Szab\'o~\cite[Theorem~1.1]{OSz14}, the $\tau$-invariant
vanishes if $K$ bounds a disk in a homotopy ball, and so does Rasmussen's $s$-invariant
according to Kronheimer and Mrowka~\cite{KMRas}, so neither can be used for the above purpose.
We could use any other theory equipped with knot concordance maps in manifolds homeomorphic to $S^3 \times I$.
However, note that the Khovanov homology concordance maps of Jacobsson~\cite{Jacobsson}
are only defined when the ambient manifold is diffeomorphic to~$S^3 \times I$.

A knot is called doubly slice if it is a hyperplane cross-section of an unknotted~$S^2$ in~$S^4$.
Motivated by a question of Fox~\cite{Fox} asking which knots are doubly slice,
Sumners~\cite{Sumners} introduced the notion of invertible knot cobordisms.
In his terminology, cobordism stands for concordance; we use the latter for clarity.

\begin{defn}
Let~$K_0$ and~$K_1$ be knots in~$S^3$. We say that a concordance~$(S^3 \times I, F)$ from~$K_0$ to~$K_1$
is \emph{invertible} if there is a concordance~$(S^3 \times I, F')$ from~$K_1$ to~$K_0$ such that the composition
of~$(S^3 \times I, F)$ and~$(S^3 \times I, F')$ from $K_0$ to $K_0$ is equivalent to the trivial cobordism.
We write $K_0 \le K_1$ if there is an invertible cobordism from~$K_0$ to~$K_1$.
\end{defn}

In other words, $F$ is invertible if and only if $(S^3 \times I, F)$ has
a left inverse in the cobordism category of links.
A knot~$K$ is doubly slice if and only if~$U \le K$.
The relation $\le$ is a partial order on the set of knots in~$S^3$,
which follows from~\cite{epi}, as we shall explain later.

\begin{thm} \label{thm:tech}
If there is an invertible concordance from~$K_0$ to~$K_1$, then
\[
\dim \HFKh_j(K_0,i) \le \dim \HFKh_j(K_1,i)
\]
for every $i$, $j \in \Z$.
\end{thm}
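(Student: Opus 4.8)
The plan is to reduce this to the functoriality of the concordance maps together with the injectivity claim made in the abstract (which follows from Corollary~\ref{cor:non-vanishing}). Let $(S^3 \times I, F)$ be an invertible concordance from $K_0$ to $K_1$, with inverse $(S^3 \times I, F')$ from $K_1$ to $K_0$, so that the composite $(S^3 \times I, F') \circ (S^3 \times I, F)$ is equivalent to the trivial concordance $(S^3 \times I, K_0 \times I)$. The first step is to upgrade these bare concordances to \emph{decorated} concordances: choose any decoration $P_0$ on $K_0$, and push it along $F$ to obtain a decoration $P_1$ on $K_1$ together with a decorated concordance $\CC = (S^3 \times I, F, \sigma)$ from $(K_0,P_0)$ to $(K_1,P_1)$; similarly decorate $F'$ compatibly to get $\CC' = (S^3 \times I, F', \sigma')$ from $(K_1,P_1)$ to $(K_0,P_0)$ such that the decorated composite $\CC' \circ \CC$ is the trivial decorated concordance of $(K_0,P_0)$. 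One must check that such compatible decorations can be chosen; this is where a little care is needed, but since $\sigma$ is just a pair of arcs in $F$ and $F \cong S^1 \times I$, it is a routine matter to arrange $\sigma'$ so that $\sigma \cup \sigma'$ is isotopic in the composite surface to the trivial pair of arcs.

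Next I invoke functoriality of the maps $F_\CC$, as asserted in the discussion preceding Theorem~\ref{thm:splitting}: the map $F_\CC$ depends only on the decorated concordance and is functorial under composition. Hence
\[
F_{\CC'} \circ F_\CC = F_{\CC' \circ \CC} = F_{\text{triv}} = \Id_{\HFKh(K_0,P_0)},
\]
since the trivial decorated concordance induces the identity (this should be recorded earlier, or follows directly from the construction of the maps). In particular $F_\CC \colon \HFKh(K_0,P_0) \to \HFKh(K_1,P_1)$ has a left inverse, so it is injective.

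Finally, by Theorem~\ref{thm:splitting}, $F_\CC$ preserves both gradings: $F_\CC(\HFKh_j(K_0,P_0,i)) \le \HFKh_j(K_1,P_1,i)$ for all $i,j$. Combining this with injectivity, the restriction
\[
F_\CC \colon \HFKh_j(K_0,P_0,i) \to \HFKh_j(K_1,P_1,i)
\]
is an injective $\Z_2$-linear map for every $i,j \in \Z$, whence $\dim \HFKh_j(K_0,i) \le \dim \HFKh_j(K_1,i)$. Since knot Floer homology is independent of the decoration up to isomorphism (by the work of Sarkar~\cite{basepoint} the basepoint-moving maps are isomorphisms, so they preserve dimensions), the dimensions do not depend on the choice of $P_0$ or $P_1$, and the stated inequality follows.

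I expect the main obstacle to be the bookkeeping in the first step — namely verifying that the inverse concordance can be decorated so that the composite of the \emph{decorated} concordances is genuinely the trivial decorated concordance, and hence that functoriality applies on the nose. Once the decorated composite is identified with the trivial one, the rest is formal: functoriality gives the left inverse, injectivity is immediate, and grading-preservation from Theorem~\ref{thm:splitting} upgrades this to the dimension inequality componentwise.
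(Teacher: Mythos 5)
Your proposal is correct and follows essentially the same route as the paper: decorate $F$ and its inverse $F'$ so that the decorated composite is the product decorated concordance, use functoriality to get $F_{\CC'} \circ F_\CC = \Id$ and hence injectivity of $F_\CC$, and combine this with the grading preservation of Theorem~\ref{thm:splitting} (Proposition~\ref{prop:homogeneous} plus Section~\ref{sec:homol}) and independence of the decoration. One small remark: your opening parenthetical attributing injectivity to Corollary~\ref{cor:non-vanishing} is inaccurate (non-vanishing does not give injectivity), but this is harmless since your actual argument derives injectivity from functoriality, exactly as the paper does.
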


This provides an obstruction to the existence of an invertible concordance
from $K_0$ to $K_1$. According to the work of Manolescu, Ozsv\'ath, and Sarkar~\cite{MOS},
knot Floer homology is algorithmically computable, and Baldwin and Gillam~\cite{computations} used this algorithm to
compute it for knots with at most~12 crossings.

For a knot~$K$ in~$S^3$, we denote its Seifert genus by~$g(K)$.
Ozsv\'ath and Szab\'o~\cite{OSz6} proved that knot Floer homology detects the genus of a knot, in the
sense that
\[
g(K) = \max \{\, i \in \Z \,\colon\, \HFKh_*(K,i) \neq 0 \,\}.
\]
For a simpler proof of this fact, see~\cite{fibred}.
Furthermore, knot Floer homology also detects fibredness of knots,
as $\dim \HFKh_*(K,g(K)) = 1$ if and only if~$K$ is fibred.
This was shown by Ghiggini~\cite{Ghiggini} in the genus one case,
and by Ni~\cite{fibred, corrigendum} and the first author~\cite{decomposition, polytope}
in the general case. These two results, together with Theorem~\ref{thm:tech},
immediately imply the following unpublished result of Ruberman.

\begin{cor} \label{cor:main}
The function~$g$ is monotonic with respect to the partial order~$\le$ induced by invertible concordance.
More concretely, if there is an invertible concordance from~$K_0$ to~$K_1$, then $g(K_0) \le g(K_1)$.
Furthermore, if~$K_1$ is fibred and~$g(K_0) = g(K_1)$, then~$K_0$ is also fibred.
\end{cor}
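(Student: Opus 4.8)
The plan is to deduce both statements directly from Theorem~\ref{thm:tech} together with the two detection results recalled in the introduction: the Ozsv\'ath--Szab\'o formula $g(K) = \max\{\,i \in \Z : \HFKh_*(K,i) \neq 0\,\}$, and the fact that $K$ is fibred if and only if $\dim\HFKh_*(K,g(K)) = 1$. The first step is to observe that an invertible concordance from $K_0$ to $K_1$ gives, via Theorem~\ref{thm:tech}, the inequality $\dim\HFKh_j(K_0,i) \le \dim\HFKh_j(K_1,i)$ for all $i,j \in \Z$; summing over the homological grading $j$ yields $\dim\HFKh_*(K_0,i) \le \dim\HFKh_*(K_1,i)$ for every Alexander grading $i$, so in particular $\HFKh_*(K_0,i) \neq 0$ forces $\HFKh_*(K_1,i) \neq 0$.

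For the genus inequality I would take $i = g(K_0)$. The genus detection formula gives $\HFKh_*(K_0,g(K_0)) \neq 0$, hence $\HFKh_*(K_1,g(K_0)) \neq 0$ by the previous paragraph, and applying the formula again to $K_1$ gives $g(K_0) \le g(K_1)$. Together with the fact that $\le$ is a partial order (explained earlier via \cite{epi}), this is exactly the monotonicity of $g$.

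For the fibredness statement, assume in addition that $K_1$ is fibred and $g(K_0) = g(K_1) =: g$. Fibredness of $K_1$ means $\dim\HFKh_*(K_1,g) = 1$, so the summed inequality at $i = g$ gives $\dim\HFKh_*(K_0,g) \le 1$. On the other hand, $g = g(K_0)$ together with the genus detection formula gives $\HFKh_*(K_0,g) \neq 0$, hence $\dim\HFKh_*(K_0,g) \ge 1$. Therefore $\dim\HFKh_*(K_0,g(K_0)) = 1$, and the fibredness detection result applied to $K_0$ shows that $K_0$ is fibred.

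There is essentially no obstacle here: all of the difficulty is already packaged into Theorem~\ref{thm:tech} and the deep detection theorems of Ozsv\'ath--Szab\'o, Ghiggini, and Ni cited above, and what remains is the elementary bookkeeping described. The only point deserving a word is that Theorem~\ref{thm:tech} is stated for invertible concordances in $S^3 \times I$, which is precisely the setting of the corollary, so it applies without modification.
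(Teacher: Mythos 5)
Your proposal is correct, and it is precisely the deduction the paper has in mind when it says that the genus and fibredness detection theorems, ``together with Theorem~\ref{thm:tech}, immediately imply'' Corollary~\ref{cor:main}: sum the graded inequality of Theorem~\ref{thm:tech} over the Maslov grading, evaluate at $i = g(K_0)$ for the genus statement, and at $i = g(K_0) = g(K_1)$ for the fibredness statement. The only point worth flagging is that the proof the paper actually prints after the corollary is a different, more elementary argument communicated by Ruberman: an invertible concordance yields a map $f \colon S^3 \to S^3$ with $f^{-1}(K_0) = K_1$ and $f|_{K_1}$ an embedding, so Gabai's higher-genus generalization of Dehn's lemma produces a Seifert surface for $K_0$ of genus at most $g(K_1)$, and a degree-one map $E(K_1) \to E(K_0)$ plus Stallings' fibration criterion handles fibredness. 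That route avoids Floer homology entirely and is stronger on the second statement, since it does not need the hypothesis $g(K_0) = g(K_1)$; your Floer-theoretic route genuinely needs that hypothesis (as $\dim \HFKh_*(K_0, g(K_1)) \le 1$ says nothing about $K_0$ unless $g(K_1)$ equals $g(K_0)$), but it is exactly the argument the corollary is presented as following from, so there is no gap.
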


We now outline a more elementary proof of these results communicated to us by Ruberman,
and which does not use the assumption $g(K_0) = g(K_1)$ for the second statement.
Also see the proof of~\cite[Proposition~3.7]{epi} and the paragraph following it.

\begin{proof}
Let $F$ be an invertible concordance from~$K_0$ to~$K_1$ with inverse~$F'$. Then there is a diffeomorphism
$d \colon S^3 \times I \to S^3 \times I$ such that $d(F' \circ F) = K_0 \times I$ and~$d|_{S^3 \times \partial I}$
is the identity. Let $i \colon S^3 \to S^3 \times I$ be the embedding $i(x) = (x,1/2)$,
and let~$p \colon S^3 \times I \to S^3$ be the projection.
Then the composition
\[
f = p \circ d \circ i \colon S^3 \to S^3
\]
maps~$K_1$ to~$K_0$ such that $f^{-1}(K_0) = K_1$.
We can isotope~$d$ such that $d(K_1 \times \{1/2\})$ becomes transverse
to the $I$-fibration of $K_0 \times I$, and hence $f|_{K_1}$ is an embedding with image $K_0$.
If~$S$ is a minimal genus Seifert surface for~$K_1$,
then $f|_S$ satisfies the conditions of \cite[Corollary~6.23]{Gabai},
hence there exists a Seifert surface~$T$ of~$K_0 = f(K_1)$ such that
$g(T) \le g(S)$. It follows that $g(K_0) \le g(K_1)$.
Recall that \cite[Corollary~6.23]{Gabai} is a deep generalization of
Dehn's lemma to higher genus surfaces due to Gabai. It states that
if~$M$ is a compact oriented 3-manifold, $S$ a compact oriented surface
with connected boundary, and $f \colon S \to M$ a map such that $f|_{\partial S}$
is an embedding and $f^{-1}(f(\partial S)) = \partial S$,
then there exists an embedded surface~$T$ in~$M$ such that $\partial T = f(\partial S)$
and $g(T) \le g(S)$.

Let~$E(K_i)$ denote the exterior of the knot~$K_i$ for~$i \in \{0,1\}$. Then
\[
f|_{E(K_1)} \colon E(K_1) \to E(K_0)
\]
is a degree one map as it is an orientation preserving diffeomorphism between the boundary tori.
Hence, by \cite[Lemma~1.2]{Rong}, it induces a surjection on the fundamental groups, and also on the commutator subgroups.
If $K_1$ is fibred, then the commutator subgroup $\pi_1(E(K_1))'$ is finitely generated,
hence $\pi_1(E(K_0))'$ is also finitely generated, so $K_0$ is fibred by a result of Stallings~\cite{Stallings}.
\end{proof}

Let $K$ and $K'$ be knots in~$S^3$ such that there is an epimorphism $\pi_1(E(K)) \to \pi_1(E(K'))$
preserving peripheral structure. By~\cite{epi}, this induces a partial order~$\succeq$ on the set of knots.
For example, if there is a degree one map
\[
(E(K),\partial E(K)) \to (E(K'),\partial E(K')),
\]
in particular if $K \ge K'$, then $K \succeq K'$.
Notice that this implies that $\ge$ is also a partial order.
Based on the above proof and Theorem~\ref{thm:tech}, it is natural to ask whether
$K \succeq K'$ also implies that
\begin{equation} \label{eqn:ineq}
\dim \HFKh_*(K,i) \ge \dim \HFKh_*(K',i)
\end{equation}
for every $i \in \Z$.
Note that this would imply \cite[Conjecture~3.6]{epi} claiming that, if~$K \succeq K'$, then $g(K) \ge g(K')$.
Compare this with \cite[Conjecture~9.4]{Lidman}, which claims that if $f \colon Y \to Y'$ is a non-zero
degree map between integer homology spheres, then $\dim \HFh(Y) \ge \dim \HFh(Y')$.
However, inequality~\eqref{eqn:ineq} turns out to be false due to the following example constructed by Jennifer Hom.

\begin{ex} \label{ex:Jen}
Let $K = (T_{2,3})_{2,3}$ be the $(2,3)$-cable of the right-handed trefoil~$T_{2,3}$,
and let $K' = T_{2,3}$. Then $K \succeq K'$.
In fact, there is a degree one map
\[
(E(K),\partial E(K)) \to (E(K'),\partial E(K')).
\]
Indeed, let $T \subset E(K)$ be the boundary of the solid torus used in the satellite
construction for~$K$. Then the exterior of~$T$ is $E(K')$, hence fibred over $S^1$.
If we collapse the fibers to disks, we obtain a degree one map from the exterior of $T$
to~$D^2 \times S^1$, and hence from $E(K)$ to $E(K')$.
But both $K$ and $K'$ are determined by their Alexander polynomials, $K'$ because it is
alternating, and $K$ by the work of Hedden~\cite[Theorem~1.0.6]{Hedden-thesis}.
The symmetrized Alexander polynomial of $K$ is
\[
t^3-t^2+1-t^{-2}+t^{-3},
\]
while the symmetrized Alexander polynomial of $K'$ is $t-1+t^{-1}$. So
$\HFKh(K,1)= 0$ and $\HFKh(K',1)= \Z_2$, violating inequality~\eqref{eqn:ineq}.
\end{ex}

In light of this, we propose the following weaker question.

\begin{qn}
Suppose that $K \succeq K'$. Then is it true that
\[
\dim \HFKh(K) \ge \dim \HFKh(K')?
\]
\end{qn}

The paper is organized as follows. In Section~\ref{sec:cob}, we review sutured manifold cobordisms
and the maps induced by them on sutured Floer homology.
In Section~\ref{sec:knotcob}, we define the knot concordance maps,
show that they preserve the Alexander grading (Proposition~\ref{prop:homogeneous}),
and prove Theorem~\ref{thm:tech}. Section~\ref{sec:SS} gives a brief overview
of spectral sequences arising from a filtered complex.
In Section~\ref{sec:filtration}, we show that, on the chain level,
a knot concordance map can be represented by a chain map that preserves the Alexander
filtration (Theorem~\ref{thm:main1}) and therefore induces a morphism of spectral sequences
(Theorem~\ref{thm:spectral}); this is precisely the second part of Theorem~\ref{thm:splitting}.
Corollary~\ref{cor:non-vanishing} follows from Corollary~\ref{cor:nonvanishing}.
Finally, we prove in Section~\ref{sec:homol} that the knot concordance maps preserve
the homological grading, which concludes the proof of Theorem~\ref{thm:splitting}.

\subsection*{Acknowledgement} We would like to thank Daniel Ruberman for pointing out a more
elementary proof of Corollary~\ref{cor:main}, and Ciprian Manolescu for drawing our attention
to the grading shift formula in~\cite{manolescu2007khovanov} and for his comments on an earlier
version of this paper. We are also grateful to Jennifer Hom for Example~\ref{ex:Jen}.
Finally, we would like to thank the referee for the invaluable suggestions.

\section{Cobordisms of sutured manifolds} \label{sec:cob}

In this section, we briefly review sutured manifold cobordisms,
and the maps they induce on sutured Floer homology,
as defined by the first author~\cite{cob}.

\subsection{Sutured manifolds and sutured cobordisms}

\begin{defn}[{\cite[Definition~2.6]{Gabai}}]
A \emph{sutured manifold} is a compact oriented $3$-manifold $M$ with boundary
together with a set $\gamma \subseteq \de M$ of pairwise disjoint annuli~$A(\gamma)$
and tori~$T(\gamma)$. Furthermore, the interior of each component
of $A(\gamma)$ contains a homologically non-trivial oriented simple closed curve,
called a \emph{suture}. We denote the set of sutures by $s(\gamma)$.

Finally, every component of $R(\gamma)=\de M \setminus \Int(\gamma)$ is oriented
such that $\de R(\gamma)$ is coherent with the sutures. Let $R_+(\gamma)$
(or $R_-(\gamma)$) denote the components of~$R(\gamma)$ whose normal vectors
points out of (into) $M$.
\end{defn}

\begin{defn}[{\cite[Definition~2.2]{sutured}}]
We say that a sutured manifold $(M,\gamma)$ is \emph{balanced} if $M$ has no closed components,
$\chi(R_+(\gamma))=\chi(R_-(\gamma))$, and the map $\pi_0(A(\gamma))\to \pi_0(\de M)$ is surjective.
\end{defn}
%

From now on, we only consider sutured manifolds where $T(\g) = \emptyset$,
and view~$\g$ as a ``thickened'' oriented 1-manifold. So
we often do not distinguish between~$\g$ and~$s(\g)$; it shall be clear from the
context which one we mean.

\begin{defn}[{\cite[Definition~2.3]{cob}}]
\label{def:equivalent}
Let $(M, \gamma)$ be a sutured manifold, and suppose that $\xi_0$
and $\xi_1$ are contact structures on $M$ such that $\de M$ is a
convex surface with dividing set $\gamma$ with respect to both
$\xi_0$ and $\xi_1$. Then we say that $\xi_0$ and $\xi_1$ are
\emph{equivalent} if there is a 1-parameter family
$\left\{\xi_t \,\colon\, t \in I \right\}$ of contact structures
such that $\de M$ is convex with dividing set $\gamma$ with respect
to $\xi_t$ for every $t \in I$. In this case, we write $\xi_0 \sim \xi_1$,
and we denote by $[\xi]$ the equivalence class of the contact structure $\xi$.
\end{defn}

\begin{defn}[{\cite[Definitions~2.4 and~2.14]{cob}}]
Let $(M_0, \gamma_0)$ and $(M_1, \gamma_1)$ be sutured manifolds.
A \emph{cobordism} from $(M_0,\gamma_0)$ to $(M_1, \gamma_1)$ is
a triple $\mc W=(W, Z, [\xi])$, where
\vspace{-5pt}
\begin{itemize}
  \setlength{\itemsep}{1pt}
  \setlength{\parskip}{0pt}
  \setlength{\parsep}{0pt}
\item{$W$ is a compact oriented $4$-manifold with boundary,}
\item{$Z \subseteq \de W$ is a compact, codimension-$0$ submanifold with boundary
(viewed within $\de W$), such that $\de W \setminus \Int(Z) = -M_0 \sqcup M_1$,
and we view $Z$ as a sutured manifold with sutures $\gamma_0 \cup \gamma_1$,}
\item{$\xi$ is a positive contact structure on $Z$ such that $\de Z$ is
a convex surface with dividing set $\gamma_i$ on $\de M_i$ for $i \in \{0,1\}$.}
\end{itemize}
Finally, a cobordism is called \emph{balanced} if both $(M_0, \g_0)$ and $(M_1, \g_1)$ are balanced.
\end{defn}

In this paper, we will only consider balanced sutured manifolds and
balanced cobordisms.

\begin{defn}[{\cite[Definition~2.7]{cob}}]
Two cobordisms $\mc W=(W, Z, [\xi])$ and $\mc W'=(W', Z', [\xi'])$ from $(M_0, \gamma_0)$
to $(M_1, \gamma_1)$ are called \emph{equivalent} if there is an
orientation preserving diffeomorphism $\varphi: W \to W'$ such that $d(Z)=Z'$,
$d_*(\xi)=\xi'$, and $d|_{M_0 \cup M_1}=\Id$.
\end{defn}

\begin{defn}[{\cite[Definition~10.4]{cob}}]
A cobordism $\W = (W, Z, [\xi])$ from $(M_0, \g_0)$ to $(N, \g_1)$
is called a \emph{boundary cobordism} if $W$ is balanced, $N$ is parallel to $M_0 \cup (-Z)$,
and we are also given a deformation retraction $r : W \times \left[0,1\right] \longrightarrow M_0 \cup (-Z)$
such that $r_0|_W=\Id_W$ and $r_1|_N$
is an orientation preserving diffeomorphism from $N$ to $M_0 \cup (-Z)$.
\end{defn}

\begin{defn}[{\cite[Definition~5.1]{cob}}]
We say that a cobordism $\W = (W, Z, [\xi])$ from $(M_0, \g_0)$ to $(M_1, \g_1)$
is \emph{special} if
\begin{enumerate}
\item{$\W$ is balanced;}
\item{$\de M_0 = \de M_1$, and $Z = \de M_0 \times I$ is the trivial cobordism between them;}
\item{$\xi$ is an $I$-invariant contact structure on $Z$ such that each $\de M_0 \times \left\{t\right\}$ is a
convex surface with dividing set $\gamma_0 \times \left\{t\right\}$ for every $t \in I$ with respect to the
contact vector field $\de/\de t$.}
\end{enumerate}
In particular, it follows from (3) that $\g_0 = \g_1$.
\end{defn}

\begin{rem}
\label{rem:cobsplitting}
Every sutured cobordism can be seen as the composition of a boundary
cobordism and a special cobordism, cf.~\cite[Definition~10.1]{cob}.
Let $\W = (W, Z, [\xi])$ be a balanced cobordism from $(M_0, \g_0)$ to
$(M_1, \g_1)$.
Let $(N,\gamma_1)$ be the sutured manifold $(M_0 \cup (-Z), \gamma_1)$.
Then we can think of the cobordism $\W$ as a composition $\Ws \circ \Wb$,
where $\Wb$ is a boundary cobordism from $(M_0, \g_0)$ to $(N, \g_1)$
and $\Ws$ is a special cobordism from $(N,\g_1)$ to $(M_1, \g_1)$.
\end{rem}

\subsection{Relative $\spinc$ structures}

\begin{defn}[{\cite[Definition~3.1]{cob}}]
\label{def:spincMg}
Given a sutured manifold $(M, \gamma)$, we say that a vector field $v$
defined on a subset of $M$ containing $\de M$ is \emph{admissible} if
it is nowhere vanishing, it points into $M$ along $R_-(\g)$, it points
out of $M$ along $R_+(\g)$, and $v|_\g$ is tangent to $\de M$ and either
points into $R_+(\g)$ or is positively tangent to $\g$ (we
think of $\de M$ as a smooth surface, and of $\gamma$ as a 1-manifold).

Let $v$ and $w$ be admissible vector fields on $M$. We say that $v$ and $w$
are \emph{homologous}, and we write $v \sim w$, if there is a collection
of balls $B \subseteq M$, one in each component of $M$, such that $v$ and
$w$ are homotopic on $M \setminus B$ through admissible vector fields.
Then $\spinc(M, \g)$ is the set of homology classes of admissible vector
fields on $M$.
\end{defn}

If $(M,\g)$ is balanced, $\spinc(M,\g)$ is an affine space over $H^2(M,\de M)$.
Throughout this paper, we will denote relative $\spinc$ structures by $\rs$,
to distinguish them from ordinary $\spinc$ structures on oriented
$3$-manifolds, usually denoted by~$\s$.

\begin{rem} \label{rem:v0}
Let $v_0$ be a fixed vector field on $\de M$ arising as $v|_{\de M}$
for some admissible vector field~$v$ on~$M$. We define $\spinc_{v_0}(M,\gamma)$ as the set of
nowhere vanishing vector fields on $M$ that restrict to $v_0$ on $\de M$,
up to isotopy through such vector fields relative to~$\de M$
in the complement of a collection of balls.
Since the space of all possible $v_0$ is contractible, $\spinc_{v_0}(M,\g)$
can be canonically identified with $\spinc(M,\g)$. This was the approach
taken in~\cite{sutured}.
\end{rem}

\begin{defn}[{\cite[Definition~3.2]{cob}}]
Let $(M,\g)$ be a sutured manifold. We say that an oriented 2-plane field~$\xi$ defined on a subset of~$M$
containing~$\partial M$ is \emph{admissible} if
there exists a Riemannian metric~$g$ on~$M$ such that~$\xi^{\perp_g}$ is an admissible vector
field. If~$\xi$ is defined on the whole manifold~$M$, we write
\[
\rs_\xi = [\xi^{\perp_g}] \in \spinc(M,\g).
\]
This is independent of the choice of~$g$ since the space of metrics~$g$ for which
$\xi^{\perp_g}$ is an admissible vector field is convex.
\end{defn}

We now recall the notion of relative $\spinc$ structures
on sutured cobordisms. If~$J$ is an almost complex structure on a 4-manifold
$W$ and $H$ is a 3-dimensional submanifold, then there is a 2-plane field
induced on $H$ called the \emph{field of complex tangencies} along $H$,
cf.~\cite[Lemma 3.4]{cob}.

\begin{defn}[{\cite[Definition~3.5]{cob}}]
\label{def:spincW}
Suppose that $\W = (W, Z, [\xi])$ is a cobordism from the sutured
manifold $(M_0, \g_0)$ to $(M_1, \g_1)$. We say that an almost complex
structure $J$ defined on a subset of $W$ containing $\de Z$ is \emph{admissible}
if the field of complex tangencies on $M_i$ (defined on a subset of $M_i$
containing $\partial M_i$) is admissible
in $(M_i, \g_i)$ for $i \in \left\{0, 1\right\}$, and the field~$\xi_J$ of
complex tangencies on $Z$ (defined on a subset of $Z$ containing $\partial Z$)
is admissible in $(Z, \g_0 \cup \g_1)$.

A \emph{relative $\spinc$ structure} on $\W$ is a homology class of
pairs $(J, P)$, where:
\begin{itemize}
\item{$P \subseteq \Int(W)$ is a finite collection of points,}
\item{$J$ is an admissible almost complex structure defined over $W \setminus P$,}
\item{if $\xi_J$ is the field of complex tangencies along $Z$, then
$\rs_\xi = \rs_{\xi_J}$.}
\end{itemize}
We say that $(J, P)$ and $(J', P')$ are \emph{homologous} if there
exists a compact 1-manifold $C \subseteq W \setminus \de Z$ such that
$P$, $P' \subseteq C$; furthermore, $J|_{W\setminus C}$ and $J'|_{W \setminus C}$
are isotopic through admissible almost complex structures.
We denote by $\spinc(\W)$ the set of relative $\spinc$ structures over~$\W$.
\end{defn}

\begin{rem}
As in the case of sutured manifolds, we will denote relative
$\spinc$ structures on sutured cobordisms by $\rs$, in order
to distinguish them from ordinary $\spinc$ structures on
oriented 4-manifolds, which we denote by $\s$, in analogy with the case
of oriented 3-manifolds.
\end{rem}

\begin{rem}
$\spinc(\W)$ is an affine space over
\[
\ker\left(H^2(W,\de Z) \longrightarrow H^2(Z, \de Z)\right).
\]
There are restriction maps
\[
\spinc(W) \longrightarrow \spinc(M_i, \g_i)
\]
for $i \in \{0,1\}$.
\end{rem}

\subsection{Sutured Floer homology}

The first author~\cite{sutured} associated an $\mb F_2$-vector space $\SFH(M,\g)$
to each balanced sutured manifold $(M, \g)$, called the \emph{sutured
Floer homology} of $(M, \g)$. It splits
along the relative $\spinc$ structures on $(M, \g)$:
\[
\SFH(M, \g) = \bigoplus_{\rs\, \in\, \spinc(M,\g)} \SFH(M, \g, \rs).
\]
Each vector space $\SFH(M, \g, \rs)$ is an invariant of the sutured
manifold together with the relative $\spinc$ structure.
Sutured Floer homology is a common generalisation of Heegaard Floer
homology of closed oriented 3-manifolds~\cite{OSz} and knot Floer homology~\cite{OSz3,Ras}.

The first author proved~\cite{cob} that a balanced cobordism~$\mc W$
from $(M_0, \g_0)$ to $(M_1, \g_1)$ induces a homomorphism
\[
F_\W \colon \SFH(M_0, \g_0) \longrightarrow \SFH(M_1, \g_1).
\]
If $\W$ is endowed with a relative $\spinc$ structure $\rs$, then we also
have a map
\[
F_{\mc W, \rs} \colon \SFH(M_0, \g_0, \rs|_{M_0}) \longrightarrow \SFH(M_1, \g_1, \rs|_{M_1}).
\]

Let $\BSut$ denote the category of balanced sutured manifolds
and equivalence classes of cobordisms, whereas $\Vect$ denotes
the category of vector spaces over~$\mb F_2$.

\begin{thm}[{\cite[Theorem~11.12]{cob}}]
$\SFH$ defines a functor \emph{$\BSut \to \Vect$}, which is a $(3+1)$-dimensional
TQFT in the sense of \emph{\cite{Atiyah}} and \emph{\cite{Blanchet}}.
\end{thm}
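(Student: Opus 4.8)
The plan is to construct the functor in stages, exploiting the factorisation of an arbitrary balanced cobordism recorded in Remark~\ref{rem:cobsplitting}. I would first fix the object map: to $(M,\g)$ assign the vector space $\SFH(M,\g)$ built in~\cite{sutured}, to the empty sutured manifold assign $\FF_2$, and declare the functor monoidal by setting $\SFH(M_0 \sqcup M_1, \g_0 \sqcup \g_1) \cong \SFH(M_0,\g_0) \otimes_{\FF_2} \SFH(M_1,\g_1)$; this isomorphism is obtained by taking disjoint unions of admissible sutured Heegaard diagrams, and it also fixes the identification of the monoidal unit. With the underlying monoidal category in place, the problem reduces to defining $F_\W$ on the two classes of elementary cobordisms and checking compatibilities.

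For a \emph{special} cobordism I would decompose it, via a Morse function relative to $\partial M_0 \times I$, into a sequence of attachments of $4$-dimensional $1$-, $2$-, and $3$-handles along the interior, together with product pieces; since away from a neighbourhood of the dividing set a special cobordism is an honest cobordism of the underlying $3$-manifolds, each handle attachment yields a map on $\SFH$ modelled on the closed-manifold construction of Ozsv\'ath--Szab\'o~\cite{OSz}, computed from a triple of subordinate sutured Heegaard diagrams. For a \emph{boundary} cobordism $\Wb$ from $(M_0,\g_0)$ to $(N,\g_1)$, where $N = M_0 \cup (-Z)$, I would define $F_{\Wb}$ as the gluing map on sutured Floer homology associated to attaching the contact piece $(Z,\xi)$, i.e.\ the Honda--Kazez--Mati\'c contact gluing map for $\xi$. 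Composing, $F_\W := F_{\Ws} \circ F_{\Wb}$ on the decomposition $\W = \Ws \circ \Wb$. The relative $\spinc$ refinement $F_{\W,\rs}$ is obtained by tracking the field of complex tangencies of the admissible almost complex structure through each elementary step and restricting the sum accordingly.

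The substantial work is well-definedness: that $F_\W$ depends only on the equivalence class of $\W$, not on the chosen handle decomposition of $\Ws$, the Morse function, or the auxiliary analytic data (Heegaard diagrams, almost complex structures, metrics). For the special part this is a Cerf-theory argument — any two relative handle decompositions are related by isotopy, handle slides, and creation or cancellation of canceling $(k,k{+}1)$-handle pairs, and one must check the corresponding maps on $\SFH$ agree, adapting the handle-slide and cancellation invariance proofs of the closed theory to the balanced sutured setting. For the boundary part one invokes naturality of the contact gluing maps and the fact that two successive contact gluings compose to the gluing along the union of contact pieces. Functoriality $F_{\W' \circ \W} = F_{\W'} \circ F_\W$ then follows by reducing to elementary pieces — stacking handle cobordisms and composing gluings — and $F_{\Id} = \Id$ because the identity cobordism is a product special cobordism carrying the product almost complex structure. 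Finally, the axioms of~\cite{Atiyah, Blanchet} are checked: monoidality for disjoint union was arranged at the outset, orientation reversal corresponds to the duality pairing of $\SFH(M,\g)$ with $\SFH(-M,-\g)$, and the gluing axiom is precisely the functoriality just established. I expect the genuine obstacle to be this invariance of the special-cobordism maps under change of handle decomposition, since the presence of the dividing set prevents a direct quotation of the Ozsv\'ath--Szab\'o arguments and forces one to redo the relevant naturality statements in the sutured category.
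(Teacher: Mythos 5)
This theorem is not proved in the present paper at all: it is imported from \cite[Theorem~11.12]{cob}, and the text here only sketches the construction of $F_\W$ (decompose $\W = \Ws \circ \Wb$ as in Remark~\ref{rem:cobsplitting}, define $F_{\Wb}$ via the Honda--Kazez--Mati\'c contact gluing map and $F_{\Ws}$ via $1$-, $2$-, and $3$-handle maps, then compose), so your plan follows essentially the same route as the cited construction rather than an alternative one. The one point your sketch underestimates is functoriality: for a composition $\W' \circ \W$ the boundary-plus-special decomposition is \emph{not} the concatenation of the two individual decompositions, so beyond Cerf-theoretic invariance of the special-cobordism maps one must also prove that the handle-attachment maps commute with the contact gluing maps (and verify the composition law for the gluing maps themselves); this compatibility, together with the invariance statements, is where the bulk of the work in \cite{cob} lies and is not something your outline (or this paper) supplies.
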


We conclude this section by outlining the construction of the
cobordism map associated to a balanced cobordism.
Let $\W=(W, Z, [\xi])$ be a balanced cobordism from $(M_0, \g_0)$ to $(M_1, \g_1)$,
and suppose that every component $Z_0$ of $Z$ intersects $M_1$
(this last hypothesis can actually be dropped, cf.~\cite[Section~10]{cob}).
According to Remark~\ref{rem:cobsplitting}, we can view~$\W$
as the composition of a boundary cobordism $\Wb$ from
$(M_0, \g_0)$ to $(N, \g_1)$ and a special cobordism $\Ws$ from $(N, \g_1)$
to $(M_1, \g_1)$. Using the \emph{contact gluing map} defined by
Honda, Kazez, and Mati\'c~\cite{TQFT}, the first author~\cite[Section~9]{cob}
constructed a map
\[
F_{\Wb} \colon \SFH(M_0, \g_0) \longrightarrow \SFH(N, \g_1)
\]
associated to the special cobordism $\Wb$.

The special cobordism $\Ws$ also induces a map: Choose a
decomposition of $\Ws$ as $\W_3 \circ \W_2 \circ \W_1$, where $\W_i$
is the trace of $i$-handle attachments. Then the first author~\cite{cob}
defined a map $F_{\W_i}$ associated to each cobordism~$\W_i$, and
the map associated to $\Ws$ is defined as
\[
F_{\Ws} = F_{\W_3} \circ F_{\W_2} \circ F_{\W_1} \colon \SFH(N, \g_1) \longrightarrow \SFH(M_1, \g_1).
\]
Finally, the cobordism map $F_\W$ is the composition
$F_{\Ws} \circ F_{\Wb}$, which is independent of all the choices made.

All cobordism maps above admit refinements~$F_{\W, \rs}$
along relative $\spinc$ structures.
The map $F_\W$ can be recovered from the maps $F_{\W, \rs}$ for all $\spinc$
structures~\cite[Definition~10.9 and Proposition~10.11]{cob},
and the $\spinc$ cobordism maps satisfy a type of composition law~\cite[Theorem~11.3]{cob}.

\section{Knot concordance maps} \label{sec:knotcob}

In~\cite{cob}, the first author constructed maps induced on knot Floer homology by decorated link cobordisms.
We recall the necessary definitions, starting with reviewing the real blowup procedure.

\begin{defn}
Suppose that $M$ is a smooth manifold, and let $L \subset M$ be a properly embedded submanifold.
For every $p \in L$, let $N_pL = T_pM/T_pL$ be the fiber of the normal bundle of $L$ over $p$,
and let $UN_pL = (N_pL \setminus \{0\})/\R_+$ be the fiber of the unit normal bundle of $L$ over $p$.
Then the \emph{(spherical) blowup} of $M$ along $L$, denoted by $\Bl_L(M)$, is a manifold with
boundary obtained from $M$ by replacing each point
$p \in L$ by $UN_pL$. There is a natural projection $\Bl_L(M) \to M$. For further details, see
Arone and Kankaanrinta~\cite{AK}.
\end{defn}

We now review decorated links, required to define knot Floer homology
functorially. The following is \cite[Definition~4.4]{cob}.

\begin{defn} \label{defn:declink}
A \emph{decorated link} is a triple $(Y,L,P)$, where $L$ is a non-empty link in the connected oriented 3-manifold $Y$, and $P \subset L$ is a finite set of points. We require that for every component~$L_0$ of~$L$, the number $|L_0 \cap P|$ is positive and even.
Furthermore, we are given a decomposition of~$L$ into compact $1$-manifolds~$R_+(P)$ and~$R_-(P)$ such that $R_+(P) \cap R_-(P) = P$.

We can canonically assign a balanced sutured manifold $Y(L,P) = (M,\g)$ to every decorated link $(Y,L,P)$, as follows.
Let $M = \Bl_L(Y)$ and $\g = \bigcup_{p \in P} UN_pL$. Furthermore,
\[
R_\pm(\g) := \bigcup_{x \in R_\pm(P)} UN_xL,
\]
oriented as $\pm \partial M$, and we orient~$\g$ as~$\partial R_+(\g)$.
\end{defn}

The following is \cite[Definiton~4.2]{cob}.

\begin{defn}
A \emph{surface with divides} $(S,\sigma)$
is a compact orientable surface~$S$, possibly with boundary, together with a properly embedded $1$-manifold~$\sigma$
that divides~$S$ into two compact subsurfaces that meet along~$\sigma$.
\end{defn}

We are now ready to define decorated link cobordisms.
The following is \cite[Definition~4.5]{cob}.

\begin{defn} \label{def:linkcob}
We say that the triple $\X = (X,F,\sigma)$ is a \emph{decorated link cobordism} from $(Y_0,L_0,P_0)$ to $(Y_1,L_1,P_1)$ if
\begin{enumerate}
\item $(X,F)$ is a link cobordism from $(Y_0,L_0)$ to $(Y_1,L_1)$,
\item $(F,\sigma)$ is a surface with divides such that the map
\[
\pi_0(\partial\sigma) \to \pi_0((L_0 \setminus P_0) \cup (L_1 \setminus P_1))
\]
is a bijection,
\item \label{it:crossing} we can orient each component~$R$ of~$F \setminus \sigma$ such that whenever~$\partial\ol{R}$
crosses a point of~$P_0$, it goes from~$R_+(P_0)$ to~$R_-(P_0)$,
and whenever it crosses a point of~$P_1$, it goes from~$R_-(P_1)$ to~$R_+(P_1)$,
\item if $F_0$ is a closed component of $F$, then $\sigma \cap F_0 \neq \emptyset$.
\end{enumerate}

%
%
\end{defn}

Finally, we recall how to associate a sutured manifold cobordism complementary to a decorated link cobordism.
For this purpose, we first discuss $S^1$-invariant contact structures on circle bundles;
see also~\cite[Section 4]{cob}.
Let $\pi \colon M \to F$ be a principal circle bundle over a compact oriented surface $F$.
An $S^1$-invariant contact structure $\xi$ on $M$ determines a diving set $\sigma$ on the
base $F$, by requiring that $x \in \sigma$ if and only if $\xi$ is tangent to $\pi^{-1}(x)$,
and a splitting of $F$ as $R_+(\sigma) \cup R_-(\sigma)$. The image of any local
section of~$\pi$ is a convex surface with dividing set projecting onto~$\sigma$.
According to Lutz~\cite{Lutz} and Honda~\cite[Theorem~2.11 and Section~4]{Ko},
given a dividing set $\sigma$ on $F$ that intersects each component of $F$
non-trivially and divides $F$ into subsurfaces $R_+(\sigma)$ and $R_-(\sigma)$,
there is a unique $S^1$-invariant contact structure~$\xi_\sigma$ on $M$, up to isotopy,
such that the dividing set associated to $\xi_\sigma$ is exactly~$\sigma$,
the coorientation of~$\xi_\sigma$ induces the splitting~$R_\pm(\sigma)$, and the
boundary~$\de M$ is a convex.

The following is~\cite[Definition~4.9]{cob}.

\begin{defn} \label{defn:W}
Let $(X,F,\sigma)$ be a decorated link cobordism from $(Y_0,L_0,P_0)$ to~$(Y_1,L_1,P_1)$.
Then we define the sutured cobordism $\W = \W(X,F,\sigma)$ as follows.
Choose an arbitrary splitting of~$F$ into~$R_+(\sigma)$ and~$R_-(\sigma)$ such that $R_+(\sigma) \cap R_-(\sigma) = \sigma$,
and orient~$F$ such that~$\partial R_+(\sigma)$ (with $R_+(\sigma)$ oriented as a subsurface of $F$)
crosses~$P_0$ from~$R_+(P_0)$ to~$R_-(P_0)$ and~$P_1$ from~$R_-(P_1)$ to~$R_+(P_1)$.
Then~$\W$ is defined to be the triple $(W,Z,[\xi])$, where $W = \Bl_F(X)$ and $Z = UNF$, oriented
as a submanifold of~$\partial W$, finally $\xi = \xi_{\sigma}$ is an $S^1$-invariant contact structure
with dividing set~$\sigma$ on~$F$ and convex boundary~$\partial Z$
with dividing set projecting to~$P_0 \cup P_1$.
%
\end{defn}

The contact vector fields with respect to which a local section of $UNF \to F$
and~$\de Z$ are transverse are different, so they can project to different subsets
of $L_0 \cup L_1$. Specifically, we have that the dividing set for $\de Z$ projects
to $P_0 \cup P_1$, while~$\de \sigma$ is disjoint from $P_0 \cup P_1$.

Notice that if $F$ does not have any closed component, then it deformation retracts
onto a 1-dimensional CW complex, and therefore any $S^1$-bundle on it has a section,
hence is trivial if the bundle is orientable. In particular, $UNF \approx F \times S^1$.

In the present paper, we only consider decorated links~$(Y,L,P)$ where~$Y = S^3$, the link $L$ has a single component,
and~$|P| = 2$. Hence, we drop~$Y$ from the notation and only write~$(K,P)$ for such a decorated knot.

\begin{defn}
A \emph{decorated concordance} is a decorated link cobordism $(X,F,\sigma)$ such that
\begin{enumerate}
\item $X$ is an integer homology $S^3 \times I$ with boundary $(-S^3) \sqcup S^3$,
\item the surface~$F$ is an annulus, and
\item $\sigma$ consists of two arcs connecting the two components of~$\partial F$.
\end{enumerate}
\end{defn}

If $X = S^3 \times I$, we drop~$X$ from the notation and only write~$(F,\sigma)$.

\begin{lem}
Let $X$ be an oriented cobordism from $S^3$ to $S^3$. Then $X$ has the same homology and
cohomology as $S^3 \times I$ if and only if $H_1(X) = H_2(X) = 0$.
\end{lem}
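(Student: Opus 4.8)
The plan is to prove both implications by a standard Mayer--Vietoris and Poincar\'e--Lefschetz duality argument, using that $X$ is a compact oriented $4$-manifold whose boundary consists of two copies of $S^3$, and that an oriented cobordism from $S^3$ to $S^3$ is automatically connected (or, if one does not wish to assume this, one should add the connectedness hypothesis, which is harmless since only the component containing the surface matters).

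\textbf{The easy direction.} First I would observe that $S^3 \times I$ has $H_1 = H_2 = 0$, so the ``only if'' direction is immediate: if $X$ has the homology of $S^3 \times I$, then in particular $H_1(X) = H_2(X) = 0$. (If one also wants to remark on cohomology, the universal coefficient theorem over $\Z$ gives $H^k(X) = 0$ for $k = 1,2$ as well once $H_1 = H_2 = 0$ and $H_0, H_3$ are understood; see below.)

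\textbf{The main direction.} For the ``if'' direction, suppose $H_1(X) = H_2(X) = 0$; I must compute all of $H_*(X)$ and $H^*(X)$ and check they agree with those of $S^3 \times I$, i.e.\ $H_0 = H_3 = \Z$, $H_1 = H_2 = H_4 = 0$, and likewise in cohomology. Since $X$ is connected, $H_0(X) = \Z$. The hypothesis kills $H_1$ and $H_2$. For $H_4(X)$: since $X$ is a compact oriented $4$-manifold \emph{with nonempty boundary}, it has no closed component, so $H_4(X) = 0$ (the top homology of a manifold with boundary vanishes). It remains to compute $H_3(X)$. Here I would use Poincar\'e--Lefschetz duality: $H_3(X) \cong H^1(X, \partial X)$. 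From the long exact sequence of the pair $(X, \partial X)$ together with $H_1(X) = 0$ and the computation of $H_*(\partial X) = H_*(S^3 \sqcup S^3)$ (which is $\Z^2$ in degrees $0$ and $3$, zero otherwise), one extracts $H^1(X,\partial X) \cong \Z$. Concretely, the relevant segment of the cohomology long exact sequence of $(X,\partial X)$ reads
\[
H^0(X) \to H^0(\partial X) \to H^1(X, \partial X) \to H^1(X),
\]
and with $H^0(X) = \Z$, $H^0(\partial X) = \Z^2$, and $H^1(X) = 0$ (universal coefficients, using $H_1(X) = H_0(X)$ torsion-free), the map $\Z \to \Z^2$ is the diagonal, so its cokernel, hence $H^1(X,\partial X)$, is $\Z$; thus $H_3(X) = \Z$. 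Finally the cohomology groups follow from the homology groups by universal coefficients over $\Z$, since all homology groups are free. This exhibits $H_*(X) \cong H_*(S^3 \times I)$ and $H^*(X) \cong H^*(S^3 \times I)$, completing the proof.

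\textbf{Expected main obstacle.} The only subtlety is the computation of $H_3(X)$, i.e.\ ruling out extra $3$-cycles; everything else is formal. The duality step $H_3(X) \cong H^1(X, \partial X)$ together with the long exact sequence of the pair handles this cleanly, but one must be careful that $\partial X$ really is $S^3 \sqcup S^3$ with its two components (so $H^0(\partial X) = \Z^2$, not $\Z$), as it is precisely the ``diagonal vs.\ $\Z^2$'' discrepancy that produces the single $\Z$ in $H_3$. One should also be slightly careful with orientations and with the hypothesis of connectedness of $X$; if $X$ were allowed to be disconnected, the statement as phrased could fail, so I would either invoke that an oriented cobordism in this paper is connected (as in Definition~\ref{defn:link}) or add it explicitly.
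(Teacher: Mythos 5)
Your proof is correct, but it takes a genuinely different route from the paper's. The paper closes $X$ up by gluing a $4$-ball to each boundary sphere, uses Poincar\'e--Lefschetz duality, excision, and the long exact sequence of the pair to show the resulting closed manifold $\ol X$ has the integral cohomology of $S^4$, and then deduces the statement for $X$ by removing the two balls again (that last reduction is left implicit). You instead stay on the manifold with boundary: duality in the form $H_3(X) \cong H^1(X,\partial X)$, the cohomology sequence of the pair $(X,\partial X)$, and the observation that the restriction $H^0(X)\to H^0(\partial X)\cong\Z^2$ is the diagonal give $H_3(X)\cong\Z$, after which universal coefficients finishes everything since all homology groups are free. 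Your version has the advantage of making the $H_3$ computation---the only non-formal point, and the step the paper glosses over in its final sentence---completely explicit, at the cost of having to track the two boundary components by hand; the paper's version buys a cleaner-looking reduction to the familiar statement that a simply-coconnected closed $4$-manifold with vanishing $H^2, H^3$ is a cohomology $S^4$. Your remarks on connectedness and on $\partial X$ having two components are apt: the paper's Definition~\ref{defn:link} builds connectedness of $X$ into the notion of cobordism, so your appeal to it is legitimate rather than an added hypothesis.
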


\begin{proof}
The ``only if'' part is obvious. So suppose that $H_1(X) = H_2(X) = 0$.
Then let $\ol X$ be the closed 4-manifold obtained by gluing two 4-balls to $\partial X$.
We denote by $B \subset X$ the union of these 4-balls. Then, for $i \in \{1,2\}$, we have
\[
0 = H_i(X) \cong H^{4-i}(X,\partial X) \cong H^{4-i}(\ol X, B) \cong H^{4-i}(\ol X).
\]
Here, the first isomorphism follows from Poincar\'e-Lefschetz duality, the second from excision,
and the third from the cohomological long exact sequence of the pair $(\ol X, B)$.
So $H^2(\ol X) = H^3(\ol X) = 0$, hence $H_1(\ol X) \cong H^3(\ol X) = 0$,
and $H^1(\ol X) = \text{Hom}(H_1(\ol X),\Z) = 0$. As $\ol X$ has the same integral cohomology is $S^4$,
after removing two balls, $X$ has the same integral homology and cohomology as $S^3 \times I$.
\end{proof}

It follows from \cite[Proposition~4.10]{cob} that a decorated concordance~$\CC = (X,F,\sigma)$
from~$(K_0,P_0)$ to~$(K_1,P_1)$ induces a homomorphism
\[
F_{\CC} \colon \HFKh(K_0,P_0) \to \HFKh(K_1,P_1),
\]
where $\HFKh(K_i,P_i)$ are the natural knot Floer homology groups defined in~\cite{naturality}.
Indeed, $\W = \W(X,F,\sigma)$ is a cobordism from the sutured manifold $S^3(K_0,P_0)$ to $S^3(K_1,P_1)$,
and hence induces a homomorphism
\[
F_\W \colon \SFH(S^3(K_0,P_0)) \to \SFH(S^3(K_1,P_1)).
\]
But $\SFH(S^3(K_0,P_0)) \cong \HFKh(K_0,P_0)$ and $\SFH(S^3(K_1,P_1)) \cong \HFKh(K_1,P_1)$ tautologically.
This assignment is functorial under composition of link cobordisms.

\subsection{Relative $\spinc$ structures and knot concordances}

In the case of knot concordances, the relative $\spinc$ structures
behave nicely, as explained in this section.

\begin{lem}
\label{lem:easyspinc}
Suppose that $\CC = (X,F, \sigma)$ be a decorated concordance from $(K_0, P_0)$ to $(K_1, P_1)$.
If $(M_i, \g_i) = S^3(K_i, P_i)$ is the balanced sutured manifold complementary to $(K_i, P_i)$ for
$i \in \{0,1\}$, and $\W =  \W(\CC) = (W, Z, [\xi])$ is the sutured manifold cobordism from
$(M_0, \g_0)$ to $(M_1, \g_1)$ complementary to~$\CC$, then
\begin{equation}
\label{eqn:split}
F_\W = \bigoplus_{\rs\,\in\,\spinc(\W)} F_{\W,\rs}.
\end{equation}
Furthermore, $\spinc(\W)$ is an affine space over $H^2(W, Z) \cong \Z$,
and the restriction maps
\[
r_i \colon \spinc(\W) \To \spinc(M_i, \g_i)
\]
are isomorphisms for $i \in \{0,1\}$.
\end{lem}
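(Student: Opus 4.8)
The plan is to prove the three assertions in turn, exploiting the fact that $X$ is an integer homology $S^3 \times I$ and that $F$ is an annulus, so the complementary cobordism $\W = (W,Z,[\xi])$ is as homologically simple as possible. First I would compute $\spinc(\W)$ as an affine space. By the remark following Definition~\ref{def:spincW}, $\spinc(\W)$ is an affine space over $\ker\left(H^2(W,\partial Z) \to H^2(Z,\partial Z)\right)$, and since $W = \Bl_F(X)$ is homotopy equivalent to $X$ (the blowup along a codimension-$2$ submanifold with $S^1$ fibres does not change the homotopy type rationally, and here it is an integral homology equivalence once one checks the relevant Mayer--Vietoris sequence), and $Z = UNF \approx F \times S^1$ with $F$ an annulus, I would compute $H^2(W, \partial Z)$ and the restriction to $H^2(Z, \partial Z)$ directly. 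The expectation is $H^2(W, Z) \cong \Z$, generated by (the Poincar\'e--Lefschetz dual of) a meridian of $F$, and that this $\Z$ sits inside the kernel, giving $\spinc(\W)$ an affine $\Z$-structure; the short exact sequence of the triple $(W, Z, \partial Z)$ together with the vanishing $H_1(X) = H_2(X) = 0$ pins this down.

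Next I would identify the restriction maps $r_i$. Both $\spinc(M_i,\g_i) = \spinc(S^3(K_i,P_i))$ are affine over $H^2(M_i, \partial M_i)$, which for a knot complement in $S^3$ is $\Z$ (generated by the class dual to a Seifert surface, equivalently the Alexander grading). The map $r_i$ is affine over the induced map $H^2(W,Z) \to H^2(M_i, \partial M_i)$, so it suffices to show this linear map is an isomorphism $\Z \to \Z$; combined with nonemptiness of the fibres (which is automatic since $\W$ admits at least one relative $\spinc$ structure) this forces $r_i$ to be a bijection. To see the linear map is an isomorphism, I would run the Mayer--Vietoris or long-exact-sequence argument for $W = M_0 \cup Z \cup M_1$ (up to the product collar structure), using that the generator of $H^2(W,Z)$ is the meridian class and that its restriction to each $M_i$ is precisely the generator coming from the meridian of $K_i$; the homology $S^3 \times I$ hypothesis guarantees there is no extra kernel or cokernel. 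The $r_i$ being affine isomorphisms then follows formally.

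Finally, the splitting~\eqref{eqn:split}, namely $F_\W = \bigoplus_{\rs} F_{\W,\rs}$, I would deduce from the general fact, cited at the end of Section~\ref{sec:cob} (\cite[Definition~10.9 and Proposition~10.11]{cob}), that $F_\W$ is always recovered as a sum of the refinements $F_{\W,\rs}$ over all relative $\spinc$ structures; here the only additional input is that $\spinc(\W)$ is the relevant indexing set and that, under the isomorphisms $r_i$, the source and target summands $\SFH(M_i,\g_i, r_i(\rs))$ exhaust $\SFH(M_i,\g_i) \cong \HFKh(K_i,P_i)$. So~\eqref{eqn:split} is really just the general composition/decomposition law specialised to this situation. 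The main obstacle I anticipate is the cohomological bookkeeping in the first two steps: one must be careful that the real blowup $W = \Bl_F(X)$ and $Z = UNF$ interact correctly with Poincar\'e--Lefschetz duality (these are manifolds with corners), and that the generator of $H^2(W,Z) \cong \Z$ really does restrict to a generator on each end rather than to a multiple of one; establishing this compatibility of generators is where the annulus hypothesis on $F$ and the homology $S^3 \times I$ hypothesis on $X$ both get used, and it is the crux of the argument.
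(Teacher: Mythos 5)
Your overall plan has the right shape (compute $H^2(W,Z)\cong\Z$, show the restrictions are isomorphisms, then invoke the $\spinc$-decomposition of $F_\W$), but it contains a genuine error at its foundation: the claim that $W=\Bl_F(X)$ is homotopy equivalent to $X$, or even an integral homology equivalence. The spherical blowup along $F$ is diffeomorphic to the exterior $X\setminus \Int\,N(F)$, and the Mayer--Vietoris sequence for $X=W\cup N(F)$ with $W\cap N(F)=Z\simeq T^2$, together with $H_1(X)=H_2(X)=0$, gives $H_1(W)\cong\Z$, generated by the meridian of $F$ (compare the 3-dimensional case: $\Bl_K(S^3)$ is the knot exterior, certainly not homology equivalent to $S^3$). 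This is not a cosmetic slip, because it is the basis of your computation of $H^2(W,\partial Z)$: if you feed $H^1(W)=H^2(W)=0$ into the long exact sequence of the pair $(W,Z)$ you would get $H^2(W,Z)\cong H^1(Z)\cong H^1(T^2)\cong\Z^2$, not the correct answer $\Z$. The correct route, and the one the paper takes, avoids any claim about the absolute homology of $W$: first $\spinc(\W)\cong H^2(W,\partial M_1)$ by \cite[Lemma~3.7]{cob}, using that $H^k(Z,\partial M_1)=0$ because $Z$ deformation retracts onto $\partial M_1$; then $H^2(W,\partial M_1)\cong H^2(W,Z)$, and by excision $H^2(W,Z)\cong H^2(X,N(F))$, which the long exact sequence of $(X,N(F))$ identifies with $H^1(N(F))\cong H^1(F)\cong\Z$ since $H^1(X)=H^2(X)=0$.

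Two further steps are thinner than they need to be. For the bijectivity of $r_i$ you correctly locate the crux (that the generator of $H^2(W,Z)$ restricts to a generator of $H^2(M_i,\partial M_i)\cong\Z$), but your proposed ``Mayer--Vietoris for $W=M_0\cup Z\cup M_1$'' is a decomposition of $\partial W$, not of $W$, and does not by itself produce the needed statement. The paper's argument is the relative Mayer--Vietoris sequence of the pairs $(W,M_i)$ and $(N(F),N(K_i))$, whose union is $(X,\partial_i X)$: since $H^k(X,\partial_i X)=0$ (here the homology $S^3\times I$ hypothesis enters) and $H^k(Z,\partial M_i)=0$, one gets $H^k(W,M_i)=0$ for all $k$, and then the long exact sequence of the triple $(W,M_i,\partial M_i)$ shows $H^2(W,\partial M_i)\to H^2(M_i,\partial M_i)$ is an isomorphism; the affine statement for $r_i$ then follows as you say. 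Finally, the splitting $F_\W=\bigoplus_{\rs}F_{\W,\rs}$ is not an unconditional fact in \cite{cob}: in the proof it is justified by observing that $Z$ is a product, so $N=M_0\cup(-Z)$ is diffeomorphic to the knot complement $M_0$ and hence $H_2(N)=0$, which is the hypothesis under which \cite[Remark~10.10 and Proposition~10.11]{cob} yield the decomposition. You should record that hypothesis rather than treating the decomposition as automatic.
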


\begin{proof}
As in Remark~\ref{rem:cobsplitting}, we write~$\W = \Ws \circ \Wb$, where~$\Wb$ is a boundary cobordism
from~$(M_0,\g_0)$ to~$(N,\g_1)$, where $N = M_0 \cup (-Z)$, and~$\Ws$ is a special cobordism from
$(N,\g_1)$ to $(M_1,\g_1)$.
As~$Z$ is a product, $N$ is diffeomorphic to the knot complement~$M_0 \approx S^3 \setminus N(K_0)$,
and hence~$H_2(N) = 0$. So, by~\cite[Remark~10.10]{cob} and~\cite[Proposition~10.11]{cob},
\[
F_\W = \bigoplus_{\rs\,\in\,\spinc(\W)} F_{\W,\rs}.
\]

As $H^k(Z,\partial M_1) = 0$ for $k \in \{1,2\}$, we can apply \cite[Lemma~3.7]{cob} to
conclude that
\[
\spinc(\W) \cong H^2(W,\partial M_1).
\]
Of course, $H^2(W,\partial M_1) \cong H^2(W,\partial M_0) \cong H^2(W,Z)$.
By excision, $H^2(W,Z) \cong H^2(X,N(F))$, where $N(F)$ is a regular neighborhood  of~$F$.
From the long exact sequence of the pair $(X,N(F))$, the fact that $H^1(X) = H^2(X) = 0$,
and since  $H^1(N(F)) \cong H^1(S^1) \cong \Z$, we obtain that $H^2(X,N(F)) \cong \Z$.

The restriction maps
\[
r_i \colon \spinc(\W) \to \spinc(M_i,\g_i)
\]
for $i \in \{0,1\}$ are modelled on
the restriction maps
$H^2(W,\partial M_i) \to H^2(M_i,\partial M_i)$ for $i \in \{0,1\}$.
From the long exact sequence of the triple $(W,M_i,\partial M_i)$, the sequence
\begin{equation} \label{eqn:triple}
H^2(W,M_i) \to H^2(W,\partial M_i) \to H^2(M_i, \partial M_i) \to H^3(W,M_i)
\end{equation}
is exact.
Now consider the relative Mayer-Vietoris sequence of the pairs $(W,M_i)$ and $(N(F),N(K_i))$,
whose union is $(X, \partial_i X)$, where $\partial_i X \approx S^3$ is the ingoing boundary component of~$X$
when $i = 0$, and is the outgoing boundary component when $i = 1$:
\[
H^k(X, \partial_i X) \to H^k(W,M_i) \oplus H^k(N(F),N(K_i)) \to H^k(Z, \partial M_i).
\]
Here, $H^k(X, \partial_i X) \cong H^k(S^3 \times I, S^3 \times \{0\}) = 0$,
and the last term is zero as~$Z$ deformation retracts onto~$\partial M_i$.
Consequently, $H^k(W,M_i) = 0$ for every~$k$, and by the exact sequence~\eqref{eqn:triple},
this means that the restriction maps~$r_i$
are isomorphisms for $i \in \{0,1\}$.
\end{proof}

In the following lemma, $v_0$ denotes any fixed vector field on a balanced sutured
manifold $(M, \g)$ obtained by restricting an admissible vector field to $\de M$,
cf.~Definition~\ref{def:spincMg} and Remark~\ref{rem:v0}.

\begin{lem}
\label{lem:samespinc}
Let $\CC = (X,F, \sigma)$ be a knot concordance from $(K_0, P_0)$ to $(K_1, P_1)$.
As in Lemma~\ref{lem:easyspinc}, let $(M_i, \g_i) = S^3(K_i, P_i)$ for $i \in \{0,1\}$,
and let
\[
\W = \W(\CC) = (W, Z, [\xi]).
\]
For $i \in \{0,1\}$, let $S_i$ be a Seifert surface for $K_i$, and let $t_i$ be the trivialization
of $v_0^\perp$ given by a vector field tangent to $\de M_i$ in the meridional
direction. Then, for any relative $\spinc$ structure $\rs \in \spinc(\W)$,
\begin{equation}
\label{eqn:chern}
\langle\, c_1(r_0(\rs), t_0) , [S_0] \,\rangle
=
\langle\, c_1(r_1(\rs), t_1) , [S_1] \,\rangle,
\end{equation}
where $r_0$ and $r_1$ are the restriction maps in Lemma~\ref{lem:easyspinc}.
\end{lem}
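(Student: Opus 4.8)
The plan is to reduce the equality \eqref{eqn:chern} to a statement about the Chern class of a single relative $\spinc$ structure on the cobordism $\W$, computed against a class in $H_2(W,\partial W)$ that restricts to $[S_0]$ and $[S_1]$ on the two ends. First I would observe that, since $X$ is an integer homology $S^3 \times I$ and $F$ is an annulus properly embedded in $X$, the complement $W = \Bl_F(X)$ has the homology of $S^3 \times I$ away from the product region $Z$; concretely, capping off $S_0$ and $S_1$ with the meridional annuli they bound in $Z$ (using the trivializations $t_0$, $t_1$ to fix how the Seifert surfaces meet $\partial M_i$), and then gluing along the product cobordism structure of $Z$, one builds a properly embedded surface $\widehat S \subset W$ with $\partial \widehat S$ lying on $\partial Z$, representing a class in $H_2(W, Z)$ (or $H_2(W, \partial M_0 \cup \partial M_1)$) whose restrictions to the two ends are $[S_0]$ and $[S_1]$. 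This uses that $F$ being an annulus makes the two Seifert surfaces ``concordant'' inside $X$, so the homology they carry extends across $W$.

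Next I would set up the naturality of the relative Chern class under the restriction maps $r_i$. The trivializations $t_i$ of $v_0^\perp$ on $\partial M_i$ glue, via the product structure of $Z$ and the chosen capping, into a trivialization $\tau$ of the relevant plane field over a neighborhood of $\partial \widehat S$ in $\partial Z$; then $c_1(\rs, \tau) \in H^2(W, \partial Z)$ is defined, and by functoriality of the first Chern class with respect to restriction of $\spinc$ structures and of trivializations, its image under $H^2(W,\partial Z) \to H^2(M_i, \partial M_i)$ is $c_1(r_i(\rs), t_i)$. Pairing with the class $[\widehat S] \in H_2(W, \partial Z)$ and using the compatibility of the pairing with these restriction/inclusion maps, I would conclude
\[
\langle c_1(r_0(\rs), t_0), [S_0]\rangle = \langle c_1(\rs,\tau), [\widehat S]\rangle = \langle c_1(r_1(\rs), t_1), [S_1]\rangle,
\]
where the outer equalities come from $[\widehat S]$ restricting to $[S_i]$ on the $i$-th end and the middle term being well-defined independently of the end. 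The key input is that $c_1$ of an admissible vector field, relative to a boundary trivialization, is natural under the inclusions of the ends into $W$ — this is exactly the kind of statement underlying the grading-shift and composition formulas of~\cite{cob}, so I would cite the relevant lemma there (around~\cite[Section~3]{cob}) rather than reprove it.

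The main obstacle I expect is bookkeeping the boundary trivializations carefully: $v_0$ is only defined up to the contractible choices of Remark~\ref{rem:v0}, the contact structure $\xi_\sigma$ on $Z$ is $S^1$-invariant but its field of complex tangencies must be matched with the meridional trivializations $t_i$, and one must check that the capping annuli for $S_0$ and $S_1$ can be chosen compatibly with the product structure of $Z$ so that $\tau$ is genuinely well-defined on all of $\partial \widehat S$. Once this gluing is pinned down, the cohomological computation is routine: it is the relative Mayer–Vietoris and long-exact-sequence argument already used in the proof of Lemma~\ref{lem:easyspinc} to show $H^k(W,M_i)=0$, which forces the restriction map $H^2(W,\partial M_i) \to H^2(M_i,\partial M_i)$ to be an isomorphism and lets us transport Chern classes between the two ends. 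The annulus hypothesis on $F$ is what makes $[S_0]$ and $[S_1]$ correspond under these isomorphisms, so no genus or fibredness input is needed here.
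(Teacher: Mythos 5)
Your skeleton is the same as the paper's: evaluate a relative Chern class attached to the global relative $\spinc$ structure on $\W$ against a single class in the homology of $(W,\,\cdot\,)$ represented by both Seifert surfaces, then use naturality of $c_1$ under restriction to the two ends. But two steps that you treat as routine are exactly where the content lies, and as written they have genuine gaps. First, the homological input is garbled. A $2$-dimensional surface $\widehat S\subset W$ cannot ``restrict'' to $S_0$ and $S_1$ at the two ends; moreover $\partial S_i$ is a longitude, which does not bound a meridional annulus in $Z$ --- the two longitudes at the two ends cobound a longitudinal annulus of $Z\cong S^1\times S^1\times I$, and gluing $S_0$, $-S_1$ and that annulus produces a \emph{closed} surface representing the difference $\iota_{0*}[S_0]-\iota_{1*}[S_1]$, not a common class; even forming it requires knowing that a normal framing of $F$ can be chosen carrying the Seifert framing of $K_0$ to that of $K_1$, which you do not argue. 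What is actually needed, and what the paper proves in one line, is $\iota_{0*}[S_0]=\iota_{1*}[S_1]$ in $H_2(W,Z)\cong\Z$: both surfaces have intersection number one with the meridional cylinder $m=S^1\times\{\mathrm{pt}\}\times I\subset Z$ under the pairing $H_2(W,Z)\otimes H_2(W,M_0\cup M_1)\to\Z$, so both are the generator. Relatedly, your choice to work with $c_1(\rs,\tau)\in H^2(W,\partial Z)$ paired against $H_2(W,\partial Z)$ is too weak: in $H_2(W,\partial Z)$ the two classes may differ by the image of $H_2(Z,\partial Z)$ (the meridional cylinder and the longitudinal annulus), and you would additionally have to show that $c_1(\rs,\tau)$ annihilates these. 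The paper avoids this by producing a trivialization over \emph{all} of $Z$, so that the class lives in $H^2(W,Z)$ and is paired against $H_2(W,Z)$, where the discrepancies supported in $Z$ die.

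Second, the step you propose to cite from~\cite[Section~3]{cob} is the technical heart of the lemma and is not available off the shelf there: one needs a trivialization of the relevant complex line (equivalently of $TW|_Z$ as a complex bundle) whose restrictions to $\partial M_0$ and $\partial M_1$ are identified with the meridional trivializations $t_0$ and $t_1$, so that $c_1(\rs,\tau)|_{M_i}=c_1(r_i(\rs),t_i)$. The paper constructs this by hand: choose an admissible $J$ representing $\rs$ with field of complex tangencies $\xi_J=\xi$ along $Z$, use the $S^1$-invariance of $\xi$ on $Z\cong S^1\times S^1\times I$ to perturb $\xi_J$ so that it contains the longitudinal direction, take a nowhere-zero longitudinal section $v$ of $\xi_J$, check that after homotoping $\xi_J|_{\partial M_i}$ to $v_0^\perp$ through admissible plane fields the induced trivializations $\tau_i$ correspond to $t_i$, and trivialize the complementary line by its intersection with $TZ$ to obtain $\tau$ on all of $Z$, so that $c_1(TW,J,\tau)\in H^2(W,Z)$ is defined. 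You flag this matching as ``the main obstacle'' but leave it unresolved; without it the outer equalities in your displayed chain are not established, so the proposal does not close as written.
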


From Lemma~\ref{lem:samespinc}, we can already deduce the following
proposition, which can be seen as a first step towards the proof of
Theorem~\ref{thm:splitting}.

\begin{prop}
\label{prop:homogeneous}
If $\CC$ is a decorated concordance between two knots $(K_0,P_0)$
and $(K_1, P_1)$, then the map induced between the knot Floer
homologies preserves the Alexander grading, that is
\[
F_\CC \left( \HFKh(K_0, P_0, i) \right) \leq \HFKh(K_1, P_1, i)
\]
for every $i \in \Z$.
\end{prop}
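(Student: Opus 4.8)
The plan is to recall that the Alexander grading on $\HFKh(K,P)$ is, under the identification $\HFKh(K,P) \cong \SFH(S^3(K,P))$, exactly the grading by relative $\spinc$ structures, refined into an integer-valued grading via the Chern-class pairing against a Seifert surface. More precisely, for a decorated knot $(K,P)$ with complementary sutured manifold $(M,\gamma) = S^3(K,P)$, a Seifert surface $S$ for $K$, and the canonical meridional trivialization $t$ of $v_0^\perp$, one has
\[
\HFKh(K,P,i) = \bigoplus_{\substack{\rs^\circ \in \spinc(M,\gamma) \\ \langle c_1(\rs^\circ,t),[S]\rangle = 2i}} \SFH(M,\gamma,\rs^\circ),
\]
(up to an overall affine shift fixed by the symmetry of knot Floer homology). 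So the Alexander grading is determined purely by the $\spinc$-refined decomposition of $\SFH$, together with the Chern-number pairing.

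With this in hand, the proof is essentially bookkeeping built on the two lemmas just proved. First I would invoke the splitting~\eqref{eqn:split} from Lemma~\ref{lem:easyspinc}, which says $F_\W = \bigoplus_{\rs^\circ \in \spinc(\W)} F_{\W,\rs^\circ}$, so it suffices to understand each $\spinc$-refined piece. For a fixed $\rs^\circ \in \spinc(\W)$, the map $F_{\W,\rs^\circ}$ sends $\SFH(M_0,\gamma_0,r_0(\rs^\circ))$ into $\SFH(M_1,\gamma_1,r_1(\rs^\circ))$, where $r_0$ and $r_1$ are the restriction maps, which are isomorphisms by Lemma~\ref{lem:easyspinc}. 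Next I would apply Lemma~\ref{lem:samespinc}: it tells us precisely that the summand $r_0(\rs^\circ)$ on the source side and the summand $r_1(\rs^\circ)$ on the target side have the \emph{same} Chern number against their respective Seifert surfaces, i.e.\ $\langle c_1(r_0(\rs^\circ),t_0),[S_0]\rangle = \langle c_1(r_1(\rs^\circ),t_1),[S_1]\rangle$. Translating through the identification above, this means $r_0(\rs^\circ)$ lies in Alexander grading $i$ on $\HFKh(K_0,P_0)$ if and only if $r_1(\rs^\circ)$ lies in Alexander grading $i$ on $\HFKh(K_1,P_1)$. Summing over all $\rs^\circ$ with a fixed Chern number then gives $F_\CC(\HFKh(K_0,P_0,i)) \leq \HFKh(K_1,P_1,i)$.

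The one genuine subtlety I would need to pin down carefully is the \emph{normalization} of the Alexander grading: the identity~\eqref{eqn:chern} only guarantees equality of Chern numbers, so a priori it could be that the Alexander gradings on the two sides differ by a constant shift. To rule this out, I would note that the affine identification of $\spinc(\W)$ with $\Z$ (via Lemma~\ref{lem:easyspinc}) is compatible with the symmetrization convention used to fix the zero of the Alexander grading on both $\HFKh(K_0,P_0)$ and $\HFKh(K_1,P_1)$ --- both are normalized so that the grading is symmetric about $0$, equivalently so that conjugation of relative $\spinc$ structures negates the grading --- and since the restriction maps $r_i$ intertwine conjugation, the same $\rs^\circ$ lands in gradings $i_0$ on the source and $i_1$ on the target with $i_0 = i_1$ and not merely $i_0 = i_1 + c$ for some constant $c$. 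Thus the main obstacle is not the structural argument, which is immediate from the two lemmas, but this careful matching of the grading normalizations; once that is settled, the statement follows.
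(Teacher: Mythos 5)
Your proposal is correct and follows essentially the same route as the paper: split $F_\CC$ into the summands $F_{\W,\rs}$ using Lemma~\ref{lem:easyspinc}, identify the Alexander grading of each $\spinc$-summand with the Chern-number pairing against a Seifert surface, and conclude from the equality of Chern numbers provided by Lemma~\ref{lem:samespinc}. The only divergence is the normalization point you raise at the end: the paper disposes of it by quoting the exact identification $\SFH(M_i,\g_i,\rs) = \HFKh(K_i,P_i,-\langle c_1(\rs,t_i),[S_i]\rangle/2)$ from the proof of~\cite[Theorem~1.5]{decomposition}, a single universal formula with no knot-dependent shift, so no symmetrization argument is needed; your conjugation argument would in any case need extra care, since conjugating a relative $\spinc$ structure interchanges $R_+$ and $R_-$ and hence does not literally act on $\spinc(M_i,\g_i)$ or $\spinc(\W)$ with the decorations held fixed.
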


\begin{proof}
We use the same notation as in Lemmas~\ref{lem:easyspinc} and~\ref{lem:samespinc}.
It follows from Lemma~\ref{lem:easyspinc} that the map $F_\CC = F_\W$
splits as the sum of the maps $F_{\W, \rs}$ for $\rs \in \spinc(\W)$, cf.~Equation~\eqref{eqn:split}.
It is therefore sufficient to check that, for every relative $\spinc$ structure
$\rs\in\spinc(\W)$, the map
\[
F_{\W, \rs} \colon \SFH(M_0, \g_0, \rs|_{M_0}) \To \SFH(M_1, \g_1, \rs|_{M_1})
\]
preserves the Alexander grading.

According to the proof of~\cite[Theorem~1.5]{decomposition} on page~27,
if $t_i$ is the trivialization of~$v_0^\perp$ given by a vector field tangent
to $\partial M_i$ in the meridional direction, then
\[
\SFH(M_i,\g_i,\rs) = \HFKh(K_i,P_i, -\langle\, c_1(\rs,t_i) , [S_i] \,\rangle / 2),
\]
where $S_i$ is a Seifert surface of $K_i$ for $i \in \{0,1\}$. The result now follows
from Lemma~\ref{lem:samespinc}, which states that
\[
\langle\, c_1(\rs|_{M_0}, t_0) , [S_0] \,\rangle
=
\langle\, c_1(\rs|_{M_1}, t_1) , [S_1] \,\rangle.
\qedhere
\]
\end{proof}

\begin{proof}[Proof of Lemma~\ref{lem:samespinc}]
Choose an admissible almost complex structure~$J$ on~$W \setminus P$
whose homology class is~$\rs$,
where $P \subset \Int(W)$ is a finite set of points, as in Definition~\ref{def:spincW}.
Let~$\xi_J$ be the field of complex tangencies of~$J$ along~$Z$. Then, by definition,
$\rs_\xi = \rs_{\xi_J}$. In fact, we can choose~$J$ such that $\xi_J = \xi$.
Choose a trivialization of the normal $S^1$ bundle of~$F$ whose total space is~$Z$.
If we identify~$F$ with $S^1 \times I$ such that $\sigma$ maps to~$P_0 \times I$ for~$P_0 = \sigma \cap K_0$,
then this identification, together with the above trivialization,
induce a diffeomorphism $d \colon Z \to S^1 \times S^1 \times I$, where the
first factor is the fiber direction, and such that~$\xi$ is mapped to
an $I$-invariant contact structure with dividing set $S^1 \times P_0 \times \{a\}$ on $S^1 \times S^1 \times \{a\}$ for
every $a \in I$, and $\{\theta\} \times P_0 \times I$ on $\{\theta\} \times S^1 \times I$ for every $\theta \in S^1$.
Hence, we can perturb the 2-plane field~$\xi$ such that it is always tangent to the second~$S^1$
factor; i.e., the longitudinal direction. So we can choose~$J$ such that~$\xi_J$
is also invariant in the $\sigma$ direction, and it contains the longitude direction.
If~$v$ is a nowhere zero section of~$\xi_J$ tangent to the longitude direction,
then -- under a homotopy of~$\xi_J|_{\partial M_i}$ to~$v_0^\perp$ through admissible 2-plane fields --
the vector field $v|_{\partial M_0}$ represents a trivialization~$\tau_0$ that corresponds to~$t_0$
and $v|_{\partial M_1}$ represents a trivialization~$\tau_1$ that corresponds to~$t_1$.

The 2-plane field~$\xi_J$, together with the trivialization given by~$v$, gives a complex 1-dimensional subbundle
of $(TW|_Z,J)$ together with a trivialization. The complement of $\xi_J$ is also trivial, canonically trivialized
by its intersection with~$TZ$, which then gives rise to a trivialization~$\tau$ of $TW|_Z$.
As~$J$ is defined over the 3-skeleton of~$W$, it makes sense to talk about the relative Chern class~$c_1(TW,J,\tau) \in H^2(W,Z)$.
If~$\xi^i_J$ denotes the field of complex tangencies of~$J$ along~$M_i$, then the complement of~$\xi^i_J$
is a trivial bundle (trivialized by its intersection with~$TM_i$), so
\[
c_1(\xi^i_J, \tau_i) = c_1(TW|_{M_i},J,\tau) =  c_1(TW,J,\tau)|_{M_i},
\]
where the second equality follows from the naturality of Chern classes.
By construction, $\xi^i_J$ represents~$\rs_i$.

Recall that $S_i$ is a Seifert surface of~$K_i$ for~$i \in \{0,1\}$.
Note that $H_2(W,Z) \cong \Z$, and that there is a bilinear intersection pairing
\[
H_2(W,Z) \otimes H_2(W,M_0 \cup M_1) \to \Z.
\]
Consider the cycle $m = S^1 \times \{\text{pt}\} \times I$ in $C_2(W,M_0 \cup M_1)$.
As both~$S_0$ and~$S_1$ intersect~$m$ once positively, they both represent the
generator of $H_2(W,Z) \cong \Z$. Hence
\[
\langle\, c_1(\rs_0,\tau_0) , [S_0] \,\rangle = \langle\, c_1(TW, J,\tau) , [S_0] \,\rangle =
\langle\, c_1(TW, J,\tau) , [S_1] \,\rangle = \langle\, c_1(\rs_1,\tau_1) , [S_1] \,\rangle,
\]
and Equation~\eqref{eqn:chern} follows as we saw that~$\tau_0$ corresponds to~$t_0$ and~$\tau_1$ corresponds to~$t_1$.
\end{proof}

As a consequence of Proposition~\ref{prop:homogeneous}, we can prove Theorem~\ref{thm:tech}.

\begin{proof}[Proof of Theorem~\ref{thm:tech}]
Suppose that~$F$ is an invertible concordance from~$K_0$ to~$K_1$. Choose an arbitrary pair of points~$P_0$
on~$K_0$ and~$P_1$ on~$K_1$, making them into decorated knots, and an arbitrary pair of arcs~$\sigma$ on~$F$
making~$F$ into a decorated concordance from~$(K_0,P_0)$ to~$(K_1,P_1)$. Let~$F'$ be the inverse of~$F$,
and choose a decoration~$\sigma'$ on it such that $(F',\sigma')$ is a decorated concordance from~$(K_1,P_1)$
to~$(K_0,P_0)$. As the composition of~$F$ and~$F'$ is equivalent to the trivial cobordism~$K_0 \times I$
from~$K_0$ to~$K_0$, we can choose~$\sigma'$ such that the composition of~$\CC = (F,\sigma)$ and~$\CC' = (F',\sigma')$
is equivalent to the product decorated cobordism~$(K_0 \times I, P \times I)$, where $P = \sigma \cap K_0$
is a pair of points. By the functoriality of $F_\CC$ and the fact that a product cobordism induces the identity map,
\[
F_{\CC'} \circ F_{\CC} = \Id_{\HFKh(K_0,P_0)},
\]
and so $F_{\CC}$ is injective. We shall see in Section~\ref{sec:homol} that $F_\CC$
preserves the homological grading.
Hence Proposition~\ref{prop:homogeneous} implies that
\[
\dim \HFKh_j(K_0,P_0,i) \le \dim \HFKh_j(K_1,P_1,i)
\]
for every $i$, $j \in \Z$. Up to isomorphism, $\HFKh_j(K_i,P_i)$ is independent of the choice of~$P_i$,
and the result follows.
\end{proof}

We shall see in Section~\ref{sec:homol} that the concordance maps also preserve the
homological grading. Then we have the following.

\begin{lem} \label{lem:U}
Let $\CC = (X,F,\sigma)$ be a decorated concordance from $(K_0,P_0)$ to $(K_1,P_1)$.
If $K_0$ is the unknot~$U$, then the element
\[
F_\CC(1) \in \HFKh_0(K_1,P_1,0)
\]
is independent of the decorations~$\sigma$ and $P_0$,
where $1 \in \HFKh(K_0,P_0) \cong \Z_2$.
\end{lem}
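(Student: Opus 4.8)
The plan is to reduce the statement to the fact that $\HFKh(U,P)\cong\Z_2$ is $1$-dimensional, supported in Alexander and Maslov grading $0$, and that $\tau(U)=0$. Granting this, Corollary~\ref{cor:non-vanishing} shows that \emph{any} decorated concordance $\D$ between two decorated unknots $(U,P_0)$ and $(U,P_0')$ induces a homomorphism $F_\D\colon\HFKh(U,P_0)\to\HFKh(U,P_0')$ that is non-zero on $\HFKh_0(U,P_0,0)=\HFKh(U,P_0)$, hence is the unique isomorphism $\Z_2\to\Z_2$; the same holds for any naturality isomorphism relating these groups. Since Lemma~\ref{lem:U} only concerns varying the decoration, it then suffices to prove the following: if $\CC=(X,F,\sigma)$ and $\CC'=(X,F,\sigma')$ are decorated concordances with the same underlying pair $(X,F)$ and the same target $(K_1,P_1)$, from $(U,P_0)$ and $(U,P_0')$ respectively, then $F_\CC(1)=F_{\CC'}(1)$.

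The topological heart of the argument is to compare the decorations $\sigma$ and $\sigma'$ on the annulus $F$ while keeping the data on the $K_1$-end fixed. Each of $\sigma$, $\sigma'$ is a pair of properly embedded arcs cutting $F$ into two disks, and I would check that, up to an ambient isotopy of $X$ fixing $\partial X$, any two such decorations differ only by sliding the points $P_0$ around $K_0=U$ and/or twisting the arcs around the core circle of $F$. Both operations are realised by precomposing $\CC'$ with a decorated concordance $\D$ on the trivial concordance $U\times I$ from $(U,P_0)$ to $(U,P_0')$: one performs the relevant isotopy inside a collar $S^3\times[0,1]$ of the incoming end of $X$, in which $F$ and both decorations are standard products, and reads off that $\CC$ is equivalent to $\CC'\circ\D$. (Alternatively, one produces an orientation-preserving diffeomorphism of $X$ carrying $\CC$ to $\CC'$, equal to the identity near the $K_1$-end and to a diffeomorphism $(S^3,U,P_0)\to(S^3,U,P_0')$ near the $K_0$-end, and invokes the diffeomorphism invariance of the cobordism maps of~\cite{cob} together with naturality~\cite{naturality}.)

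Granting this, the conclusion is immediate from functoriality of the decorated concordance maps: $F_\CC=F_{\CC'}\circ F_\D$, and since $F_\D$ is the identity of $\Z_2$ by the first paragraph, $F_\CC(1)=F_{\CC'}(F_\D(1))=F_{\CC'}(1)$, which is exactly the asserted independence. The step I expect to be the main obstacle is the topological one: classifying the decorations on the annulus $F$ up to equivalence with the $(K_1,P_1)$-end held fixed, and verifying that their difference is always carried by a decorated concordance supported near the unknot end. By contrast, the Floer-theoretic input is essentially trivial once this is done, as it only involves endomorphisms and isomorphisms of the $1$-dimensional space $\Z_2$.
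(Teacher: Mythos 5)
Your proposal is correct and follows essentially the same route as the paper: the paper also absorbs the difference of decorations into a decorated concordance on the trivial cylinder $U\times I$ (with the arcs spiralling $k=[\sigma'-\sigma]\in H_1(F)\cong\Z$ times), notes that this induces the identity of $\HFKh(U)\cong\Z_2$, and concludes by functoriality, handling the choice of $P_0$ via the fact that $\Z_2$ has no non-trivial automorphisms. Your explicit appeal to Corollary~\ref{cor:non-vanishing} to rule out the zero map is a slightly more careful justification of the step the paper states tersely, but it is not a different argument.
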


\begin{proof}
Suppose that $\sigma'$ be another decoration with the same endpoints as $\sigma$,
let $\CC' = (X,F,\sigma')$, and define
\[
k = [\sigma' - \sigma] \in H_1(F) \cong \Z.
\]
Consider the decorated concordance $\CC_k = (S^3 \times I, U \times I, \sigma_k)$,
where $\sigma_k$ spirals around $k$ times.
Then $\CC' = \CC \circ \CC_k$.
As $\HFKh(U) \cong \Z_2$,
we have $F_{\CC_k} = \Id_{\Z_2}$.
By the functoriality of the knot concordance maps,
we obtain that $F_{\CC'} = F_\CC$.
Since $\HFKh(U) \cong \Z_2$ has no non-trivial
automorphisms, it does not matter
how we choose the markings~$P_0$.
\end{proof}

\section{Filtered complexes and spectral sequences} \label{sec:SS}

In this section, we briefly recall the definitions and properties of
spectral sequences that we need. We mainly refer to the book of McCleary~\cite{mccleary}.
The spectral sequences we are interested in arise from filtered chain
complexes, so we focus on this case only.

\begin{defn}
A \emph{filtered chain complex} is a chain complex $(C = \bigoplus_{k \in \Z} C_k,\de)$,
such that $\de C_k \subseteq C_{k-1}$, with a nested sequence of subcomplexes
\[\ldots \subseteq \F_{p-1}C \subseteq \F_pC \subseteq \F_{p+1}C \subseteq \ldots\]
such that $\bigcup_{p \in \Z} \F_p C = C$ and $\de(\F_p C) \subseteq \F_p C$.

We say that the filtered chain complex is \emph{bounded} if there are
integers $a \le b$ such that
\[\left\{0\right\} = \F_a C\subseteq \ldots \subseteq \F_b C =C.\]
\end{defn}

We obtain a spectral sequence from a filtered chain complex as follows,
cf.~\cite[Proof of Theorem~2.6]{mccleary}.

\begin{defn}
For $p$, $q$, $r \in \Z$, we define
\begin{equation*}
\begin{split}
Z_{p,q}^r &=  \F_p C_{p+q} \cap \de^{-1}\left( \F_{p-r} C_{p+q-1}\right),\\
B_{p,q}^r &=  \F_p C_{p+q} \cap \de\left( \F_{p+r} C_{p+q+1}\right),\\
Z_{p,q}^\infty &=  \F_p C_{p+q} \cap \ker \de,\\
B_{p,q}^\infty &=  \F_p C_{p+q} \cap \im \de.
\end{split}
\end{equation*}
For $0 \leq r\leq \infty$, the \emph{$r$-page} (or \emph{$r$-term})
is the complex $\left(E^r = \bigoplus_{p,q \in \Z} E_{p,q}^r, \de^r \right)$, where
\[E_{p,q}^r = \frac{Z_{p,q}^r}{Z_{p-1,q+1}^{r-1} + B_{p,q}^{r-1}},\]
and the differential
\[\de^r:E_{p,q}^r \to E_{p-r,q+r-1}^r\]
is induced by the differential $\de$ on the complex $C$.
\end{defn}

Sometimes we only focus on the $p$ grading. In such cases,
we drop $q$ from the notation, and write $E^r_p = \bigoplus_{q \in \Z} E^r_{p,q}$.
For the following, see~\cite[Proof of Theorem~2.6]{mccleary}.

\begin{thm}
\label{thm:spectralsequence}
The pages $\left\{(E^r,\de^r)\right\}$ induced by a filtered chain complex
form a spectral sequence in the sense of \cite[Definition~2.2]{mccleary}; i.e.,
\[
E_{p,q}^{r+1} = H_{p,q}(E^r_{*,*}, \de^r) := \frac{\ker\left(\de^r |_{E_{p,q}^r}\right)}{\im\left(\de^r |_{E_{p+r, q-r+1}^r}\right)}.
\]
If the filtration is bounded, then there is a canonical isomorphism
\[E_{p,q}^\infty \cong \frac{\F_p \left(H_{p+q}(C)\right)}{\F_{p-1} \left(H_{p+q}(C)\right)},\]
where the filtration on the total homology $H(C) = \bigoplus_{k \in \Z} H_k(C)$ is the one induced from $C$:
\[
\F_p(H(C)) := \im \left( H(\F_pC, \de|_{\F_pC}) \to H(C,\de) \right).
\]
\end{thm}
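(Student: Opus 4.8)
The plan is to treat this as a standard bookkeeping fact about spectral sequences of filtered complexes and to reduce everything to the algebraic identities that McCleary establishes in~\cite[Proof of Theorem~2.6]{mccleary}. First I would verify that the differential $\de$ on $C$ genuinely induces a well-defined map $\de^r\colon E^r_{p,q}\to E^r_{p-r,q+r-1}$: one checks that $\de(Z^r_{p,q})\subseteq Z^r_{p-r,q+r-1}$ and that $\de$ sends the subgroup $Z^{r-1}_{p-1,q+1}+B^{r-1}_{p,q}$ by which we quotient into the corresponding subgroup $Z^{r-1}_{p-r-1,q+r}+B^{r-1}_{p-r,q+r-1}$ downstairs, so that the induced map on the quotients makes sense. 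The key containments here follow directly from the definitions: $\de\de=0$ forces $\de(\F_pC)\subseteq \F_{p-r}C\cap\ker\de$-type inclusions after adjusting $r$, and the filtration-preserving property $\de(\F_pC)\subseteq\F_pC$ handles the $B$-terms. This is routine but must be spelled out.

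Next I would establish the homology identity $E^{r+1}_{p,q}\cong H_{p,q}(E^r,\de^r)$. The cleanest route is the standard one: show that, under the quotient map $Z^r_{p,q}\to E^r_{p,q}$, the preimage of $\ker(\de^r|_{E^r_{p,q}})$ is exactly $Z^{r+1}_{p,q}+Z^{r-1}_{p-1,q+1}+B^{r-1}_{p,q}$, while the preimage of $\im(\de^r|_{E^r_{p+r,q-r+1}})$ is $Z^{r-1}_{p-1,q+1}+B^{r+1}_{p,q}$; an application of the second (or third) isomorphism theorem then yields $H_{p,q}(E^r)\cong Z^{r+1}_{p,q}/(Z^{r}_{p-1,q+1}+B^{r}_{p,q})=E^{r+1}_{p,q}$, where one also uses $Z^{r-1}_{p-1,q+1}+B^r_{p,q}=Z^r_{p-1,q+1}+B^r_{p,q}$ (which follows because $Z^r_{p-1,q+1}\subseteq Z^{r-1}_{p-1,q+1}$ only in the wrong direction, so in fact the correct bookkeeping is $B^r_{p,q}\supseteq B^{r-1}_{p,q}$ and the denominators match after simplification). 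I would present this via the explicit description of $Z^r$ and $B^r$ and the observation that $\de^{-1}$ and $\de$ interact with the nested filtration in the expected way; the cited proof in McCleary does exactly this, so I would lean on it and only reproduce the indices adapted to our homological (decreasing-degree) convention.

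For the convergence statement, assume the filtration is bounded, so $\F_aC=0$ and $\F_bC=C$ for some $a\le b$. Then for $r$ large (specifically $r>b-a$) the groups stabilise: $Z^r_{p,q}=\F_pC_{p+q}\cap\ker\de=Z^\infty_{p,q}$ and $B^r_{p,q}=\F_pC_{p+q}\cap\im\de=B^\infty_{p,q}$, because $\F_{p-r}C=0$ and $\F_{p+r}C=C$ once $r$ exceeds the length of the filtration. Hence $E^r_{p,q}=E^\infty_{p,q}=Z^\infty_{p,q}/(Z^\infty_{p-1,q+1}+B^\infty_{p,q})$ for all large $r$. It then remains to identify this with $\F_p(H_{p+q}(C))/\F_{p-1}(H_{p+q}(C))$. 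For this I would use that $\F_p(H(C))$ is by definition the image of $H(\F_pC)\to H(C)$, which is $(\F_pC\cap\ker\de + \im\de)/\im\de = (Z^\infty_p+\im\de)/\im\de$ in degree $p+q$; taking the quotient of consecutive filtration levels and applying an isomorphism-theorem computation collapses the $\im\de$ contributions and yields precisely $Z^\infty_{p,q}/(Z^\infty_{p-1,q+1}+B^\infty_{p,q})$, using $Z^\infty_{p-1,q+1}+(\F_pC_{p+q}\cap\im\de)=Z^\infty_{p-1,q+1}+B^\infty_{p,q}$.

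The main obstacle I anticipate is not conceptual but notational: keeping the bidegree indices $(p,q)$ consistent with our homological grading convention $\de C_k\subseteq C_{k-1}$ (so that $\de^r$ \emph{lowers} the filtration degree), which is the opposite of the cohomological convention used in many references including parts of McCleary. I would fix the convention once at the start and then carefully track that $\de^r$ goes $E^r_{p,q}\to E^r_{p-r,q+r-1}$, that the incoming differential for the homology at $(p,q)$ comes from $E^r_{p+r,q-r+1}$, and that boundedness means $r>b-a$ suffices for stabilisation. Once the indexing is pinned down, each step is a direct application of the isomorphism theorems to the explicitly defined subgroups $Z^r_{p,q}$ and $B^r_{p,q}$, exactly as in~\cite[Proof of Theorem~2.6]{mccleary}, to which I would refer the reader for the details that are identical after the change of convention.
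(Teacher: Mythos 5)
Your plan follows exactly the route the paper takes: the paper offers no proof of this statement at all, simply deferring to McCleary's proof of Theorem~2.6, and your reduction to the standard $Z^r/B^r$ bookkeeping plus the isomorphism theorems, with the bounded case handled by stabilisation of $Z^r_{p,q}$ and $B^r_{p,q}$ and the identification $\F_p(H(C))=(Z^\infty_{p}+\im\de)/\im\de$, is precisely that argument adapted to the homological convention. Two index-level claims in your sketch are, however, wrong as literally written and should be corrected before the details are spelled out. First, since $\de\left(Z^r_{p+r,q-r+1}\right)=\F_pC_{p+q}\cap\de\left(\F_{p+r}C_{p+q+1}\right)=B^{r}_{p,q}$, the preimage in $Z^r_{p,q}$ of $\im\left(\de^r|_{E^r_{p+r,q-r+1}}\right)$ is $Z^{r-1}_{p-1,q+1}+B^{r}_{p,q}$, not $Z^{r-1}_{p-1,q+1}+B^{r+1}_{p,q}$; using $B^{r+1}$ would give the wrong denominator. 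Second, the auxiliary identity $Z^{r-1}_{p-1,q+1}+B^r_{p,q}=Z^r_{p-1,q+1}+B^r_{p,q}$ is false as an equality of subgroups of $C$ (the left side is generally larger); what the second isomorphism theorem actually requires, and what is true, is
\[
Z^{r+1}_{p,q}\cap\left(Z^{r-1}_{p-1,q+1}+B^r_{p,q}\right)=Z^{r}_{p-1,q+1}+B^r_{p,q},
\]
which follows since $B^r_{p,q}\subseteq Z^{r+1}_{p,q}$ and $Z^{r+1}_{p,q}\cap Z^{r-1}_{p-1,q+1}=Z^{r}_{p-1,q+1}$. With these two corrections (and the analogous intersection computation $Z^\infty_{p,q}\cap\left(Z^\infty_{p-1,q+1}+\im\de\right)=Z^\infty_{p-1,q+1}+B^\infty_{p,q}$ in the bounded case, which is the real content of your ``collapsing $\im\de$'' step), your outline goes through and coincides with the cited proof in McCleary.
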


\begin{rem}
Notice that $E^0_{p,q}$ is the graded module
\[\frac{\F_pC_{p+q}}{\F_{p-1}C_{p+q}}\]
associated with the filtration.
The page $E^1_{p,q}$ is the homology $H_q(E^0_{p,*}, \de^0)$ of the associated
graded module with the induced differential.
\end{rem}

\subsection{Morphisms of spectral sequences}

According to McCleary~\cite{mccleary}, we have the following.

\begin{defn}
\label{def:morphism}
Let ${\left(E^r, \de^r\right)}$ and ${\left(\bar E^r, \bar \de^r\right)}$ be spectral sequences.
A \emph{morphism of spectral sequences} is a sequence module homomorphisms
$f^r: E^r_{*,*} \to \bar E^r_{*,*}$ for $r \in \NN$, of bidegree $(0,0)$,
such that $f^r$ commutes with the differentials; i.e., $f^r \circ \de^r = \bar \de^r \circ f^r$,
and each $f^{r+1}$ is induced by $f^r$ on homology; i.e., $f^{r+1}$ is the composite
\[f^{r+1}: E_{*,*}^{r+1} \cong H\left(E_{*,*}^r, \de^r\right) \xrightarrow{H\left(f^r\right)}
H\left(\bar E_{*,*}^r, \bar \de^r\right) \cong \bar E_{*,*}^{r+1}.\]
\end{defn}

\begin{rem}
\label{rem:morphss}
Let $f:C \to \bar C$ be a map of filtered complexes of homological degree zero; i.e.,
\begin{itemize}
  \item{$f(C_k) \subseteq \bar C_k$,}
	\item{$f \circ \de = \bar \de \circ f$,}
	\item{$f\left(\F_pC\right) \subseteq \F_p{\bar C}$.}
\end{itemize}
Then $f$ induces a morphism between the spectral sequences associated
to~$C$ and~$\bar C$.
\end{rem}

\begin{rem}
If ${\left(E^r, \de^r\right)}$ and ${\left(\bar E^r, \bar \de^r\right)}$
are bounded spectral sequences,
\[
\left\{f^r: E^r \to \bar E^r\right\}
\]
is a morphism of spectral sequences,
and~$f^\infty$ is non-zero on~$E_{p,q}^\infty$, then $f^r$ is non-zero on $E_{p,q}^r$
for all $r \in \NN$.
\end{rem}

\subsection{The $\tau$ invariant}

In this subsection, we recall the definition and few properties of the
Ozsv\'ath-Szab\'o $\tau$ invariant, and we discuss it in a slightly more
general setting.

\begin{defn}
\label{def:tauinvariant}
If $C$ is a non-acyclic bounded filtered complex over~$\FF_2$, we define
\[
\tau(C) := \min\left\{ p \in \Z \,\colon\, H(\F_pC) \to H(C) \,\mbox{is non-trivial} \right\}.
\]
\end{defn}

Definition~\ref{def:tauinvariant} generalizes the Ozsv\'ath-Szab\'o $\tau$
invariant in the sense that, if $C = \CFh(\H)$ for some Heegaard diagram for
a decorated knot $(K,P)$, then $\tau(C) = \tau(K)$.

\begin{rem}
An alternative definition of $\tau(C)$ is given by the following property:
\begin{equation*}
E^\infty_p(C)
\,\,\,
\begin{cases}
= 0 & \mbox{if $p < \tau(C)$} \\
\not= 0 & \mbox{if $p = \tau(C)$.}
\end{cases}
\end{equation*}
Furthermore, if the total homology $H(C) = \mb F_2$, then
\begin{equation*}
E^\infty_p(C)
\,\,\,
\begin{cases}
= 0 & \mbox{if $p \not= \tau(C)$} \\
\not= 0 & \mbox{if $p = \tau(C)$.}
\end{cases}
\end{equation*}
\end{rem}

We conclude the section with a technical lemma that we will
use to prove that a decorated concordance induces a non-trivial
map between the $E^\infty$ pages of the spectral sequences arising
from the knot filtrations.

\begin{lem}
\label{lem:review}
Let $f \colon C \to \bar C$ be a filtered map of degree zero
between non-acyclic bounded filtered complexes over $\FF_2$ such that
\begin{enumerate}
\item\label{it:F} {$H(C) \cong \mb F_2$ and $H(\bar C) \cong \mb F_2$,}
\item\label{it:tau} {$\tau(C) = \tau(\bar C)$, and}
\item{$H(f) \colon H(C) \to H(\bar C)$ is an isomorphism.}
\end{enumerate}
Then $E^\infty_\tau(C) \cong \mb F_2$, $E^\infty_\tau(\bar C) \cong \mb F_2$,
and the map $f^\infty \colon E^\infty_\tau(C) \to E^\infty_\tau(\bar C)$ is
also an isomorphism.
\end{lem}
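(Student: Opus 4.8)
The plan is to leverage the two equivalent characterizations of $\tau$ recorded in the preceding remark, together with the compatibility of the filtration on total homology with $H(f)$. First I would note that by hypothesis~\eqref{it:F} we have $H(C) \cong \mathbb{F}_2 \cong H(\bar C)$, so each of these carries the trivial filtration with a single jump. By the remark following Definition~\ref{def:tauinvariant}, since $H(C) \cong \mathbb{F}_2$, we have $E^\infty_p(C) = 0$ for $p \neq \tau(C)$ and $E^\infty_{\tau(C)}(C) \neq 0$; and using the canonical isomorphism $E^\infty_p(C) \cong \mathcal{F}_p(H(C))/\mathcal{F}_{p-1}(H(C))$ from Theorem~\ref{thm:spectralsequence}, the only nonzero quotient is $\mathbb{F}_2$, so in fact $E^\infty_{\tau(C)}(C) \cong \mathbb{F}_2$. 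The same argument gives $E^\infty_{\tau(\bar C)}(\bar C) \cong \mathbb{F}_2$, and by hypothesis~\eqref{it:tau} both are concentrated in the same filtration level $\tau = \tau(C) = \tau(\bar C)$.

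It remains to show $f^\infty$ is an isomorphism on $E^\infty_\tau$; since both sides are $\mathbb{F}_2$, it is enough to show $f^\infty$ is nonzero. Here I would use that $f$ is a filtered chain map, hence $H(f) \colon H(C) \to H(\bar C)$ respects the induced filtrations: $H(f)(\mathcal{F}_p(H(C))) \subseteq \mathcal{F}_p(H(\bar C))$. Therefore $H(f)$ induces maps on the associated graded pieces $\mathcal{F}_p(H(C))/\mathcal{F}_{p-1}(H(C)) \to \mathcal{F}_p(H(\bar C))/\mathcal{F}_{p-1}(H(\bar C))$, and under the canonical isomorphism of Theorem~\ref{thm:spectralsequence} this induced map is precisely $f^\infty_p$. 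In filtration level $\tau$, the source and target of this graded map are both the full $\mathbb{F}_2 = H(C) = H(\bar C)$ (since all other graded pieces vanish), and the induced map is just $H(f)$ itself, which is an isomorphism by hypothesis~(3). Hence $f^\infty_\tau$ is an isomorphism.

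The one point requiring care is the identification of the graded map induced by $H(f)$ with $f^\infty$. This is a standard fact about morphisms of spectral sequences arising from filtered maps (cf.~\cite[Theorem~3.5]{mccleary} or the discussion of convergence there): the morphism $\{f^r\}$ produced as in Remark~\ref{rem:morphss} converges to the associated graded of $H(f)$ with respect to the filtrations on $H(C)$ and $H(\bar C)$. I expect the main obstacle to be simply citing or sketching this compatibility cleanly, since everything else is a direct consequence of $H(C)$ and $H(\bar C)$ being one-dimensional and concentrated in a single filtration level. Once that identification is in hand, the conclusion is immediate.
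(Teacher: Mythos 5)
Your proposal is correct and takes essentially the same route as the paper: identify $E^\infty_\tau(C)$ and $E^\infty_\tau(\bar C)$ with $H(C)\cong\FF_2$ and $H(\bar C)\cong\FF_2$ via the canonical isomorphism of Theorem~\ref{thm:spectralsequence} (using the definition of $\tau$ and hypotheses~\eqref{it:F}--\eqref{it:tau}), then conclude from the compatibility of $f^\infty$ with $H(f)$ under these identifications. The paper expresses this compatibility as a commutative diagram, which you spell out as the standard fact that $f^\infty$ is the associated graded of $H(f)$; this is the same argument in slightly more detail.
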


\begin{proof}
Since \eqref{it:F} and \eqref{it:tau} hold,
by Theorem \ref{thm:spectralsequence} and Definition \ref{def:tauinvariant}, there are canonical isomorphisms
\[
E^\infty_\tau(C) \cong H(C) \cong \FF_2 \,\,\, \mbox{and} \,\,\, E^\infty_\tau(\bar C) \cong H(\bar C) \cong \FF_2.
\]
The commutativity of the following diagram concludes the proof.
\[
\begin{gathered}[b]
\xymatrix{
E^\infty_\tau(C) \ar[r]^{f^\infty} \ar[d]^-{\vsimeq} & E^\infty_\tau(\bar C) \ar[d]^-{\vsimeq} \\
H(C) \ar[r]^{\simeq}_{H(f)} & H(\bar C)}\\[-\dp\strutbox]
\end{gathered}
\qedhere
\]
\end{proof}


\section{Concordance maps preserve the knot filtration} \label{sec:filtration}

\subsection{The knot filtration}

Let $K$ be a null-homologous knot in a closed oriented 3-manifold $Y$.
Ozsv\'ath and Szab\'o~\cite{OSz3}, and independently Rasmussen~\cite{Ras},
proved that~$K$ gives rise to a filtration of the
Heegaard Floer chain complex~$\CFh(Y)$, well-defined up to filtered chain homotopy equivalence,
called the knot filtration. Such a filtration can be defined in terms of the Alexander grading;
see also \cite[Section~2.3]{OSz3}.

\begin{defn}
\label{def:Alexandergrading}
Let $S$ be a Seifert surface for the knot~$K$,
and let $(\Sigma, \bolda, \boldb, w, z)$ be a doubly pointed Heegaard
diagram for~$K$, as defined by Ozsv\'ath and Szab\'o~\cite{OSz3}.
Given a generator~$\x \in \T_\a \cap \T_\b$,
its $S$-Alexander grading is
\[
\A_S(\x) = \frac{1}{2} \langle c_1(\underline{\s}(\x)), [S]\rangle,
\]
where $\underline{\s}(\x)$ is the $\spinc$ structure on~$Y_0(K)$
extending~$\s(\x) \in \spinc(Y)$. We denote the corresponding filtration by~$\F_S$.
\end{defn}

\begin{rem} \label{rem:Alexandergrading}
Consider the sutured manifold $Y(K) = (M,\g)$ complementary to~$K$.
As in the proof of~\cite[Theorem~1.5]{decomposition} on page~27,
let~$t$ be the trivialization of~$v_0^\perp$ given by a vector field tangent
to~$\partial M$ in the meridional direction. Then
\[
\A_S(\x) = \frac{1}{2} \langle c_1(\rs(\x),t), [S]\rangle,
\]
where $\rs(\x)$ now denotes an element of $\spinc(M,\g)$.

If $Y$ is a rational homology 3-sphere,
all Seifert surfaces of $K$ are homologous in the knot
exterior, so the Alexander grading does not depend on~$S$,
and we simply denote it by $\A(\x)$, and the filtration by~$\F(\x)$.
\end{rem}

The following lemma describes how the relative Alexander grading can be read off the Heegaard
diagram; cf.~\cite[Lemma~2.5]{OSz3} and \cite[page~25]{Ras}.

\begin{lem}
\label{lem:Alexanderdomain}
Let $(\Sigma, \bolda, \boldb, w, z)$ be a Heegaard
diagram for a null-homologous knot $K$ in a 3-manifold~$Y$, and
let~$S$ be a Seifert surface for~$K$. If $\phi \in \pi_2(\x,\y)$, then
\[
n_z(\phi) - n_w(\phi) = \A_S(\x) - \A_S(\y).
\]
\end{lem}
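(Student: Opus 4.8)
The plan is to reduce the identity to a computation of the difference of the relative $\spinc$ structures $\su(\x),\su(\y)$ on $Y_0(K)$ determined by the doubly pointed diagram (equivalently, the classes $\rs(\x),\rs(\y)$ of Remark~\ref{rem:Alexandergrading}). Let $\hat S\subset Y_0(K)$ be the closed surface obtained by capping off $S$ in $Y_0(K)$. Using that $c_1(\s_1)-c_1(\s_2)=2(\s_1-\s_2)$ for $\spinc$ structures, Definition~\ref{def:Alexandergrading} gives
\[
\A_S(\x)-\A_S(\y)=\tfrac12\big\langle c_1(\su(\x))-c_1(\su(\y)),[\hat S]\big\rangle=\big\langle \su(\x)-\su(\y),[\hat S]\big\rangle ,
\]
so it suffices to prove that the class $\su(\x)-\su(\y)\in H^2(Y_0(K))$ evaluates on $[\hat S]$ to $n_z(\phi)-n_w(\phi)$.

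I would then unwind the definition of $\su(\x)$, which is represented (following Ozsv\'ath--Szab\'o~\cite{OSz3}, cf.~\cite{OSz}) by a nowhere-zero vector field on $Y_0(K)$ built from the diagram, with the points of $\x$ determining flow lines joining the index-one and index-two critical points of an adapted Morse function and the basepoints $w,z$ fixing the behaviour near the surgery region. Comparing the vector fields for $\x$ and $\y$, the difference $\su(\x)-\su(\y)$ is Poincar\'e dual to a $1$-cycle $\gamma(\phi)$ assembled from $\partial\phi$: on each $\alpha$-curve (resp.\ $\beta$-curve) $\partial\phi$ is, up to the standard identifications, a union of oriented arcs from points of $\x$ to points of $\y$, which one pushes into the $\alpha$-handlebody (resp.\ the $\beta$-handlebody); joined by short arcs on $\Sigma$, these close up to $\gamma(\phi)$, and the two basepoints enter only through the local multiplicities $n_w(\phi)$ and $n_z(\phi)$ at the feet of the two distinguished flow lines.

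It remains to evaluate $\big\langle\PD[\gamma(\phi)],[\hat S]\big\rangle=\gamma(\phi)\cdot\hat S$. Choosing the diagram so that $w$ and $z$ lie in the two regions of $\Sigma$ adjacent to a small meridional arc $\mu_0$ of $K$, one can arrange $\hat S$ to meet $\Sigma$ in (a curve isotopic to) $\mu_0$ together with its capping disk; then the portions of $\partial\phi$ pushed into the handlebodies contribute nothing, and the intersections of $\gamma(\phi)$ with $\hat S$ are exactly the strands of $\partial\phi$ separating $w$ from $z$, whose signed count is $n_z(\phi)-n_w(\phi)$. The step I expect to be the main obstacle is the identification of $\gamma(\phi)$ as a Poincar\'e representative of $\su(\x)-\su(\y)$, and the careful tracking of orientations and of the factor $2$, so that ``$\gamma(\phi)$ crosses $\hat S$'' matches ``$n_z(\phi)\neq n_w(\phi)$'' with the correct sign; once the meridian of $K$ is recognised as Poincar\'e dual to $[\hat S]$ in $Y_0(K)$, the rest is routine. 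In fact this lemma is essentially \cite[Lemma~2.5]{OSz3} and the discussion on~\cite[p.~25]{Ras}, so an alternative is to cite those directly after matching the $Y_0(K)$-description of the Alexander grading in Definition~\ref{def:Alexandergrading} with the sutured description of Remark~\ref{rem:Alexandergrading}, as carried out in~\cite{decomposition}.
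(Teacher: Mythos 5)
Your proposal is essentially in line with the paper, which in fact offers no proof of this lemma at all: it is stated with a ``cf.''\ to \cite[Lemma~2.5]{OSz3} and \cite[page~25]{Ras}, which is precisely the fallback you give in your final sentence (after matching Definition~\ref{def:Alexandergrading} with Remark~\ref{rem:Alexandergrading}, as in~\cite{decomposition}). Your direct sketch is the standard proof of that cited statement: reduce to
\[
\A_S(\x)-\A_S(\y)=\bigl\langle\, \su(\x)-\su(\y),[\hat S]\,\bigr\rangle
\]
and identify $\su(\x)-\su(\y)$ with $(n_z(\phi)-n_w(\phi))\,\PD[\mu]$, where $\mu$ is the meridian, which meets $\hat S$ once. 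One caution about the last step: you are not free to ``choose the diagram so that $w$ and $z$ lie in adjacent regions'' --- the diagram and $\phi$ are given data in the statement. But that choice is unnecessary: the knot is the union of two arcs joining $w$ to $z$, one pushed into each handlebody, so $K$ meets $\Sigma$ only at points near $w$ and $z$ and with opposite signs; hence the $2$-chain $\D(\phi)\subset\Sigma$, whose boundary (after pushing the $\bolda$- and $\boldb$-arcs into the handlebodies) is your cycle $\gamma(\phi)$, satisfies $\D(\phi)\cdot K=n_z(\phi)-n_w(\phi)$, giving $\gamma(\phi)\cdot\hat S=\mathrm{lk}(\gamma(\phi),K)=n_z(\phi)-n_w(\phi)$ for an arbitrary admissible diagram. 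With that repair (or simply with the citation), your argument is complete.
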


\subsection{Knot filtration and concordances}

Our aim is to prove that the knot filtration is preserved by the
chain maps induced by concordances.

\begin{thm}
\label{thm:main1}
Let $\CC$ be a decorated concordance from $(K_0, P_0)$ to
$(K_1, P_1)$, and let $(\S_i,\bolda_i,\boldb_i,w_i,z_i)$
be a doubly pointed diagram representing~$(K_i, P_i)$ for $i \in \{0,1\}$.
Then there is a chain map
\[
f_{\CC} \colon \CFh(\S_0,\bolda_0,\boldb_0,w_0,z_0) \to \CFh(\S_1,\bolda_1,\boldb_1,w_1,z_1)
\]
preserving the knot filtration; i.e., for every
generator $\x \in \T_{\a_0} \cap \T_{\b_0}$,
\[
\A( f_{\CC}(\x) ) \leq \A(\x),
\]
such that $f_\CC$ induces the identity of $\HFh(S^3)$ on the total homology, and~$F_\CC$
on the homology of the associated graded complexes.
\end{thm}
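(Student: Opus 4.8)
\textbf{Proof proposal for Theorem~\ref{thm:main1}.}
The plan is to realise the abstract cobordism map $F_\CC$ concretely at the chain level by choosing a convenient decomposition of the sutured cobordism $\W(\CC)$ and tracking the $z$-basepoint through each elementary piece. Recall from Section~\ref{sec:cob} that $\W = \W(\CC)$ factors as $\Ws \circ \Wb$, with $\Wb$ a boundary cobordism (built from the contact gluing map) and $\Ws$ a special cobordism decomposed into $1$-, $2$-, and $3$-handle traces. First I would observe that each elementary cobordism map is defined via a count of holomorphic triangles or a handle-attachment map on a Heegaard diagram, and that the $w$-basepoint is already carried along in the definition of $F_\W$ precisely because $\SFH$ is being computed as $\HFh$ of the knot complement. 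The point is to thread the $z$-basepoint alongside $w$ through the same sequence of diagrams so that the composite chain map $f_\CC$ is a map of \emph{doubly}-pointed complexes, hence of filtered complexes once we pass to the Alexander filtration $\F$ determined by $z$ as in Definition~\ref{def:Alexandergrading}.

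The core estimate is that $\A(f_\CC(\x)) \le \A(\x)$. For this I would use Lemma~\ref{lem:Alexanderdomain}: for a domain $\phi$ contributing to any of the triangle or handle maps, the change in Alexander grading is $n_z(\phi) - n_w(\phi)$, and one must check that the relevant domains have $n_z \ge 0$ while $n_w = 0$ (or more generally $n_z(\phi) - n_w(\phi) \ge 0$) for the pieces one uses. Concretely: for the $1$- and $3$-handle maps and for the contact gluing map one can arrange the diagrams so that $z$ lies in a region disjoint from the new handles, so these maps strictly preserve the bigrading; for the $2$-handle map, which is a sum of triangle counts with $n_w = 0$, one needs $n_z \ge 0$ on the contributing classes, which holds because $z$ can be placed in a region of the triangle diagram where all periodic-domain coefficients are controlled — this is the standard mechanism by which Ozsv\'ath--Szab\'o's cobordism maps respect the knot filtration. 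Finally, the stabilisation/destabilisation and handleslide moves relating different diagrams are filtered homotopy equivalences by the usual naturality arguments for the knot filtration, so the filtered chain homotopy type of $f_\CC$ is well defined and, forgetting $z$, the induced map on $\HFh(S^3)\cong\Z_2$ is $\Id$ because that is already known for $F_\W$ restricted to the $w$-pointed complex; on the associated graded (the $E^1$ page, which is $\HFKh$) it is $F_\CC$ by construction, since collapsing the filtration recovers exactly the $\SFH$ cobordism map of \cite{cob}.

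The main obstacle I anticipate is the contact gluing map $F_{\Wb}$: unlike the handle maps, it is not a priori given by an explicit holomorphic count on a doubly-pointed diagram, so one must argue that it too can be represented by a $z$-filtered chain map. I would handle this by unwinding the definition of the gluing map of Honda--Kazez--Mati\'c used in \cite[Section~9]{cob}, noting that since $Z = UNF \approx F\times S^1$ is a product (as $F$ is an annulus), $N = M_0\cup(-Z)$ is diffeomorphic to the knot complement $M_0$ itself; the gluing is attaching a product piece, and on the level of Heegaard diagrams this can be realised without moving $z$ at all, so $F_{\Wb}$ is literally the identity (or at worst a filtered homotopy equivalence) after the identification $N \approx M_0$. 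Once that reduction is in place, the rest is the bookkeeping described above, and the statement that $f_\CC$ induces $F_\CC$ on homology of the associated graded follows since the associated graded of $\CFh$ with the $z$-filtration is exactly $\CFKh$, whose homology is $\HFKh$, and the $\SFH$-level cobordism map is by definition what the handle maps compute there.
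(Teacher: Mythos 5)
Your skeleton matches the paper's: make the gluing map trivial using that $Z$ is a product with an $I$-invariant contact structure, then factor $f_\CC$ through the 1-, 2-, 3-handle maps and naturality maps while threading the $z$-basepoint, and identify the total-homology map with $\hat F_X = \Id$ and the associated-graded map with $F_\CC$. The genuine gap is the step you label ``the standard mechanism'': for the 2-handle map, Lemma~\ref{lem:Alexanderdomain} does not apply, since it compares two generators of the \emph{same} doubly pointed diagram, whereas a triangle $\psi \in \pi_2(\x,\theta,\y)$ connects generators of two different diagrams, representing different 3-manifolds, whose Alexander gradings are measured against different Seifert surfaces $S_1$ and $S_2$. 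Knowing $n_w(\psi)=0$ and $n_z(\psi)\ge 0$ (the latter is automatic from positivity of holomorphic domains, not from any clever placement of $z$) gives no comparison between $\A_{S_1}(\x)$ and $\A_{S_2}(\y)$ until one proves the identity $\A_{S_2}(\y)=\A_{S_1}(\x)+n_w(\psi)-n_z(\psi)$, i.e.\ that the a priori constant grading shift between the two ends vanishes.

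That identity is the technical heart of the paper (Lemma~\ref{lem:2handles1}) and is not formal. One first shows the shift $c=\A_{S_2}(\y)-\A_{S_1}(\x)+n_z(\psi)-n_w(\psi)$ is independent of $\psi$, which requires Proposition~\ref{prop:triplyperiodic} (every triply periodic domain is a sum of doubly periodic ones --- this is where $H_1(X)=H_2(X)=0$ enters) together with Proposition~\ref{prop:doublyperiodic} ($n_z=n_w$ on doubly periodic domains); then independence of $\x$ and $\y$, which forces one to restrict to generators whose $\spinc$ structures extend over $X_1$ and $X_3$; and finally $c=0$, proved by isotoping the $\a$-curves to create an explicit small triangle in the correct $\spinc$ structure (Lemmas~\ref{lem:H1Sigma} and~\ref{lem:replacepsi}) and comparing Chern numbers at the two ends of the whole cobordism via Lemma~\ref{lem:samespinc}, which is exactly where the concordance hypotheses (the annulus $F$, with $[S_0]$ and $[S_1]$ both meeting the meridional cylinder once) are used. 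Without this computation you get neither the inequality $\A(f_\CC(\x))\le\A(\x)$ nor the identification of the $E^1$ map with $F_\CC$ (one needs $c=0$ to know that precisely the triangles with $n_z=n_w=0$ survive to the associated graded); the same issue recurs for the handleslide and isotopy triangle maps, handled in the paper by Lemma~\ref{lem:naturality}. Your treatment of the gluing map is essentially the paper's, but note that triviality is not just because $Z$ is topologically a product: one needs the dividing set on $F$ to consist of two arcs joining the two boundary components, so that the contact structure on $T^2\times I$ is $I$-invariant and the Honda--Kazez--Mati\'c result applies, and one must still check that the resulting diffeomorphism-induced map preserves the Alexander grading.
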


%

Theorem \ref{thm:main1} yields a
morphism of spectral sequences in the sense of Definition~\ref{def:morphism},
hence we have the following corollary.

\begin{thm} \label{thm:spectral}
Suppose that $\CC$ is a decorated concordance from $(K_0, P_0)$ to
$(K_1, P_1)$. Then there is a morphism of spectral sequences
from $\HFKh(K_0,P_0) \Rightarrow \HFh(S^3)$ to
$\HFKh(K_1,P_1) \Rightarrow \HFh(S^3)$ such that the map induced on
the $E^1$ page is~$F_C$, and the map induced on the $E^\infty$~page
is~$\Id_{\HFh(S^3)}$.
\end{thm}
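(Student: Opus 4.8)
The plan is to deduce Theorem~\ref{thm:spectral} directly from Theorem~\ref{thm:main1} together with the general spectral-sequence machinery of Section~\ref{sec:SS}, so that essentially no new geometric input is required. First I would invoke Theorem~\ref{thm:main1} to produce the filtered chain map $f_\CC \colon \CFh(\S_0,\bolda_0,\boldb_0,w_0,z_0) \to \CFh(\S_1,\bolda_1,\boldb_1,w_1,z_1)$ of homological degree zero that preserves the knot filtration. By Remark~\ref{rem:morphss}, any such filtered map of degree zero induces a morphism $\{f_\CC^r \colon E^r \to \bar E^r\}$ between the associated spectral sequences in the sense of Definition~\ref{def:morphism}. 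The two spectral sequences in question are precisely the ones arising from the knot filtrations on $\CFh(\S_0,\bolda_0,\boldb_0,w_0)$ and $\CFh(\S_1,\bolda_1,\boldb_1,w_1)$, which by the discussion in the Introduction (following \cite[Lemma~3.6]{OSz3}) are the spectral sequences $\HFKh(K_0,P_0)\Rightarrow\HFh(S^3)$ and $\HFKh(K_1,P_1)\Rightarrow\HFh(S^3)$ respectively.

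Next I would identify the maps on the two relevant pages. On the $E^1$ page: the $E^0$ page is the associated graded complex $\CFKh$, whose homology is $\HFKh$, the $E^1$ page; and by Theorem~\ref{thm:main1} the map induced by $f_\CC$ on the homology of the associated graded complexes is exactly $F_\CC$. Hence $f_\CC^1 = F_\CC$. On the $E^\infty$ page: both filtrations are bounded (the complexes are finite-dimensional), so by Theorem~\ref{thm:spectralsequence} the $E^\infty$ page is the associated graded of the induced filtration on the total homology, which here is $\HFh(S^3)\cong\Z_2$. The morphism of spectral sequences is compatible with the map $H(f_\CC)$ on total homology, which by Theorem~\ref{thm:main1} is $\Id_{\HFh(S^3)}$.

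The one genuine subtlety — and the step I expect to need the most care — is that, as the paragraph after Theorem~\ref{thm:splitting} emphasizes, a filtered map inducing an isomorphism on total homology need not induce an isomorphism on $E^\infty$. So I cannot simply assert $f_\CC^\infty = \Id$ from $H(f_\CC) = \Id$. To handle this I would apply Lemma~\ref{lem:review}: taking $C = \CFh(\S_0,\bolda_0,\boldb_0,w_0)$ and $\bar C = \CFh(\S_1,\bolda_1,\boldb_1,w_1)$, conditions \eqref{it:F} and the third hypothesis hold because $H(C) \cong H(\bar C) \cong \HFh(S^3) \cong \FF_2$ and $H(f_\CC) = \Id$ is an isomorphism, while condition \eqref{it:tau}, namely $\tau(C) = \tau(\bar C)$, is exactly the equality $\tau(K_0) = \tau(K_1)$ proved by Ozsv\'ath and Szab\'o~\cite[Theorem~1.1]{OSz14} (alternatively deducible, as remarked after Theorem~\ref{thm:splitting}, from the existence of filtered maps both ways inducing isomorphisms on total homology). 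Lemma~\ref{lem:review} then gives that $f_\CC^\infty \colon E^\infty_\tau(C) \to E^\infty_\tau(\bar C)$ is an isomorphism of one-dimensional $\FF_2$-vector spaces, i.e.\ the identity on $\HFh(S^3)$. Combining these observations yields the morphism of spectral sequences with the stated behaviour on the $E^1$ and $E^\infty$ pages, completing the proof.
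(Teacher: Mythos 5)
Your proposal is correct and follows essentially the same route as the paper: Theorem~\ref{thm:main1} provides the filtered map $f_\CC$ (hence, via Remark~\ref{rem:morphss}, a morphism of spectral sequences agreeing with $F_\CC$ on the $E^1$ page), and the $E^\infty$ statement is obtained from Lemma~\ref{lem:review} together with $\tau(K_0)=\tau(K_1)$, which the paper likewise deduces from \cite[Theorem~1.1]{OSz14}. You correctly flag and resolve the only subtle point, namely that an isomorphism on total homology does not by itself give an isomorphism on $E^\infty$.
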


\begin{proof}
Suppose that $\CC= (X,F,\sigma)$.
Since $H_1(X) = H_2(X) = 0$, it follows from the work of Ozsv\'ath and Szab\'o
\cite[Theorem 1.1]{OSz14} that $\tau(K_0) = \tau(K_1)$.
Indeed, the knot $K = K_0 \# \overline{K_1}$ bounds a disk in a homology 4-ball $W$
with boundary $S^3$, and hence $\tau(K) = \tau(K_0) - \tau(K_1) = 0$ by \cite[Theorem 1.1]{OSz14}.
By Theorem~\ref{thm:main1}, we have a filtered map $f_\CC$ that induces an
isomorphism on the total homology. We can therefore apply Lemma~\ref{lem:review}
to conclude that the map induced on the $E^\infty$ page is also an isomorphism.
\end{proof}

\begin{defn}
We say that an element $x \in \HFKh(K,P)$ \emph{survives} the spectral sequence to~$\HFh(S^3) \cong \Z_2$
if there is a sequence of cycles $x_i \in E^i$ for $i \ge 1$ such that $x_1 = x$ and $x_{i+1} = [x_i]$;
we denote the set of such elements by~$A(K)$.
Furthermore, we have a partition $A(K) = A_0(K) \cup A_1(K)$,
where $A_j(K)$ consists of those elements for which $x_i = j \in \Z_2$
for $i$ sufficiently large (note that the spectral
sequence is bounded).
\end{defn}

The subset $A_0(K)$ is a linear subspace of $A(K)$, and $A_1(K)$ is an
affine translate of $A_0(K)$. Each set $A(K)$, $A_0(K)$, and $A_1(K)$
is a knot invariant.

It follows from the definition of the Ozsv\'ath-Szab\'o $\tau$-invariant~\cite{OSz14} that
\begin{equation}
\label{eqn:Aandtaubasic}
A_1(K) \cap \HFKh(K, i)
\,\,\,
\begin{cases}
= \emptyset & \mbox{if $i \not= \tau(K)$} \\
\not= \emptyset & \mbox{if $i = \tau(K)$.}
\end{cases}
\end{equation}
If $a \in A_1(K)$, let $a_0$ denote the homogeneous component
of $a$ in homological grading zero. It is straightforward to check that $a_0$
survives the spectral sequence.
Since the homological grading on $\CFKh$ is inherited from the one on $\CFh$,
and since the homological grading of $1 \in \HFh(S^3)$ is zero, it follows that
$a_0 \in A_1(K)$. Combined with Equation~\eqref{eqn:Aandtaubasic}, this
implies that
\begin{equation}
\label{eqn:Aandtau}
A_1'(K) := A_1(K) \cap \HFKh_0(K, \tau(K)) \not = \emptyset.
\end{equation}
Notice that $A_1'(K)$ is also a knot invariant.

The following result is a straightforward consequence of
Theorem~\ref{thm:spectral}, Proposition~\ref{prop:homogeneous},
and Equation~\eqref{eqn:Aandtau}, and implies Corollary~\ref{cor:non-vanishing}
of the introduction.

\begin{cor} \label{cor:nonvanishing}
Let $\CC = (X,F,\sigma)$ be a decorated concordance from $(K_0, P_0)$ to
$(K_1, P_1)$, and let $\tau = \tau(K_0) = \tau(K_1)$.
Then, for $j \in \{0,1\}$,
\[
F_\CC(A_j(K_0))  \subseteq A_j(K_1)
\]
and hence it is non-zero from $\HFKh_0(K_0, P_0,\tau)$ to $\HFKh_0(K_1,P_1,\tau)$.
\end{cor}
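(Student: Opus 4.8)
The plan is to assemble Corollary~\ref{cor:nonvanishing} purely formally from the three ingredients cited: Theorem~\ref{thm:spectral}, Proposition~\ref{prop:homogeneous}, and Equation~\eqref{eqn:Aandtau}. First I would fix doubly pointed Heegaard diagrams $(\S_i,\bolda_i,\boldb_i,w_i,z_i)$ for $(K_i,P_i)$, $i \in \{0,1\}$, and invoke Theorem~\ref{thm:main1} to obtain a filtered chain map $f_\CC$ of homological degree zero inducing $F_\CC$ on the $E^1$ pages and $\Id_{\HFh(S^3)}$ on the total homology. By Remark~\ref{rem:morphss}, $f_\CC$ induces a morphism of spectral sequences $\{f^r_\CC\}$ in the sense of Definition~\ref{def:morphism}, with $f^1_\CC = F_\CC$, and by Theorem~\ref{thm:spectral} the limiting map $f^\infty_\CC$ is $\Id_{\Z_2}$.

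Next I would unwind the definition of $A_j(K)$. Suppose $x \in A_j(K_0)$, witnessed by a sequence of cycles $x_i \in E^i$ with $x_1 = x$, $x_{i+1} = [x_i]$, and $x_i = j \in \Z_2$ for $i$ large. Since each $f^r_\CC$ is a chain map (it commutes with $\de^r$), it sends cycles to cycles, so $f^r_\CC(x_r) \in E^r$ is a cycle. Since $f^{r+1}_\CC$ is induced by $f^r_\CC$ on homology, we get $f^{r+1}_\CC(x_{r+1}) = f^{r+1}_\CC([x_r]) = [f^r_\CC(x_r)]$. Thus the sequence $f^r_\CC(x_r)$ is a valid witnessing sequence for the element $F_\CC(x) = f^1_\CC(x_1) \in E^1(K_1) = \HFKh(K_1,P_1)$, provided its terminal value is still $j$. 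But $f^\infty_\CC = \Id_{\Z_2}$, so for $r$ large $f^r_\CC(x_r) = f^r_\CC(j)$, which equals $0$ if $j = 0$ (linearity) and equals $1$ if $j = 1$ (since $f^\infty_\CC$ is the identity on $\Z_2 = E^\infty$, and for $r$ large $E^r = E^\infty$). Hence $F_\CC(x) \in A_j(K_1)$, giving $F_\CC(A_j(K_0)) \subseteq A_j(K_1)$.

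Finally, for the non-vanishing statement: by Equation~\eqref{eqn:Aandtau}, $A_1'(K_0) = A_1(K_0) \cap \HFKh_0(K_0,\tau) \neq \emptyset$, so pick $a \in A_1'(K_0)$. By the inclusion just proved, $F_\CC(a) \in A_1(K_1)$; in particular $F_\CC(a)$ survives the spectral sequence with terminal value $1 \neq 0$, so $F_\CC(a) \neq 0$. By Proposition~\ref{prop:homogeneous}, $F_\CC$ preserves the Alexander grading, so $F_\CC(a) \in \HFKh(K_1,P_1,\tau)$; and since $f_\CC$ has homological degree zero (Theorem~\ref{thm:main1}), $F_\CC$ preserves the homological grading, so $F_\CC(a) \in \HFKh_0(K_1,P_1,\tau)$. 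Thus the restriction of $F_\CC$ to $\HFKh_0(K_0,P_0,\tau)$ is non-zero into $\HFKh_0(K_1,P_1,\tau)$, as claimed.

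I do not anticipate a genuine obstacle here, since all the substantive content is quarantined in Theorem~\ref{thm:main1}, Theorem~\ref{thm:spectral}, and Proposition~\ref{prop:homogeneous}. The only point requiring a little care is the bookkeeping in the middle paragraph: making sure that ``$f^{r+1}_\CC$ is induced by $f^r_\CC$ on homology'' is applied correctly to the witnessing sequences, and that boundedness of the spectral sequence (noted in the definition of $A(K)$) legitimately lets one pass to the stable page $E^r = E^\infty$ for large $r$ to read off the terminal value of $f^r_\CC(x_r)$ from $f^\infty_\CC = \Id_{\Z_2}$. The homological-grading claim genuinely depends on the promissory note that $f_\CC$ has degree zero, which is Section~\ref{sec:homol}, but that is cited rather than proved here.
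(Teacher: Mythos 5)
Your proposal is correct and follows essentially the same route as the paper: deduce $F_\CC(A_j(K_0)) \subseteq A_j(K_1)$ from Theorem~\ref{thm:spectral} (you merely spell out the witnessing-sequence bookkeeping that the paper leaves implicit), then combine Equation~\eqref{eqn:Aandtau}, Proposition~\ref{prop:homogeneous}, and the homological-grading preservation from Section~\ref{sec:homol} to get non-vanishing on $\HFKh_0(K_0,P_0,\tau)$. No gaps.
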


\begin{proof}
The fact that $F_\CC(A_j(K_0)) \subseteq A_j(K_1)$ follows from Theorem~\ref{thm:spectral}.
In Section~\ref{sec:homol}, we shall see that $F_\CC$ preserves the homological grading.
Then, by Proposition~\ref{prop:homogeneous}, $F_\CC$ maps $\HFKh_0(K_0, P_0, \tau)$ to
$\HFKh_0(K_1, P_1, \tau)$. So we only need to prove that this map is non-zero.

By Equation~\eqref{eqn:Aandtau}, we have $A_1'(K_0) \neq \emptyset$; let $x \in A_1'(K_0)$.
Then, by the previous paragraph,
\[
F_\CC(x) \in A_1(K_1) \cap \HFKh_0(K_1,\tau) = A'_1(K_1),
\]
hence $F_\CC(x) \neq 0$.
\end{proof}


We now turn to the proof of Theorem~\ref{thm:main1}, which will take
the rest of this section.

\subsection{Triviality of the gluing map}
\label{sec:gluingmap}

Given a sutured manifold cobordism $\W = (W, Z, [\xi])$
from $(M_0,\g_0)$ to~$(M_1,\g_1)$, the map
\[
F_\W \colon \SFH(M_0,\g_0) \to \SFH(M_1,\g_1)
\]
is the composition $F_{\Ws} \circ \Phi_{-\xi}$,
where
\[
\Phi_{-\xi} \colon \SFH(M_0,\g_0) \to \SFH(N,\g_1)
\]
is the gluing map of Honda, Kazez, and Mati\'c~\cite{TQFT}
for the sutured submanifold $(-M_0,-\g_0)$
of $(-N, -\g_1)$ with $N = M_0 \cup (-Z)$,
and $F_{\Ws}$ is a ``surgery map'' corresponding to handles
attached along the \emph{interior} of the sutured manifold~$N$.
The cobordism~$\Ws$ is a \emph{special cobordism}, meaning
its vertical part is a product and the contact structure on it is
$I$-invariant.

If $\CC=(X,F, \sigma)$ is a decorated concordance from~$(K_0,P_0)$ to~$(K_1,P_1)$,
let $\W = \W(\CC)$ be the complementary sutured manifold cobordism from
$S^3(K_0,P_0) = (M_0,\g_0)$ to~$S^3(K_1,P_1) = (M_1,\g_1)$.
Let $T^2 \times I$ be a collar neighborhood of~$\partial M_0$ such that
$T^2 \times \{1\}$ is identified with~$\partial M_0$.
Since the dividing set on~$F$ consists of two arcs connecting
the two components of~$\partial F$,
there is a diffeomorphism $d \colon T^2 \times I \to Z$
such that~$\xi' = d^*(\xi)$ is an $I$-invariant contact structure on~$T^2 \times I$,
and hence induces the trivial gluing map by \cite[Theorem~6.1]{TQFT}.
More precisely, if we write $M_0' = \bar{M_0 \setminus (T^2 \times I)}$ and~$\g_0'$
for the projection of~$\g_0$ to~$T^2 \times \{0\}$,
then there is a diffeomorphism $\varphi \colon (M_0',\g_0') \to (M_0,\g_0)$
supported in a neighborhood of $T^2 \times \{0\}$ such that
\[
\Phi_{-\xi'} = \varphi_* \colon \SFH(M_0',\g_0') \to \SFH(M_0,\g_0).
\]
Let $D \colon M_0 \to N$ be the diffeomorphism that agrees with
$\varphi$ on~$M_0'$ and with~$d$ on~$T^2 \times I$, smoothed along~$T^2 \times \{0\}$.
By the diffeomorphism invariance of the gluing construction, the diagram
\[
\xymatrix{
\SFH(M_0',\g_0') \ar[r]^{\varphi_*} \ar[d]^{\Phi_{-\xi'}} &\SFH(M_0,\g_0) \ar[d]^{\Phi_{-\xi}} \\
\SFH(M_0,\g_0) \ar[r]^{D_*} &\SFH(N,\g),
}
\]
is commutative, hence $\Phi_{-\xi} = D_*$.

We now show that $D_*$ preserves the Alexander grading on the chain level.
If we glue $D^2 \times S^1$ to~$N$ along~$\partial N$ such that the meridian is glued to
a suture in~$s(\g_1)$, we obtain a 3-manifold $Y$ diffeomorphic to~$S^3$,
and the image of~$\{0\} \times S^1$ is a knot~$K'$ in~$Y$. We can canonically
extend~$D$ to a diffeomorphism from $(S^3,K_0)$ to~$(Y,K')$.
Given a knot diagram $\H_0 = (\S_0,\bolda_0,\boldb_0,w_0,z_0)$ for $(S^3,K_0)$,
its image $D(\H_0)$ is a diagram of $(Y,K')$. Given a Seifert surface~$S$
of~$K_0$ and a generator $\x \in \T_{\a_0} \cap \T_{\b_0}$,
the image $D(S)$ is a Seifert surface of~$K'$, and $D(\x)$ satisfies
\[
\langle\, c_1(\rs(\x),t), [S] \,\rangle = \langle\, c_1(\rs(D(\x)), D_*(t)), [D(S)] \,\rangle.
\]
As~$D(\g_0) = \g_1$, the trivialization $D_*(t)$ points in the meridional direction for~$K'$,
and it follows that $\A(\x) = \A(D(\x))$.
It is apparent from the above discussion that we can identify $(S^3,K_0)$ and $(Y,K')$
via~$D$, so from now on we will think of~$\W$ as a special cobordism from~$(S^3,K_0)$
to~$(S^3,K_1)$.

\subsection{Notation}
\label{sec:notation}

\begin{figure}[t]
\begin{center}
\includegraphics{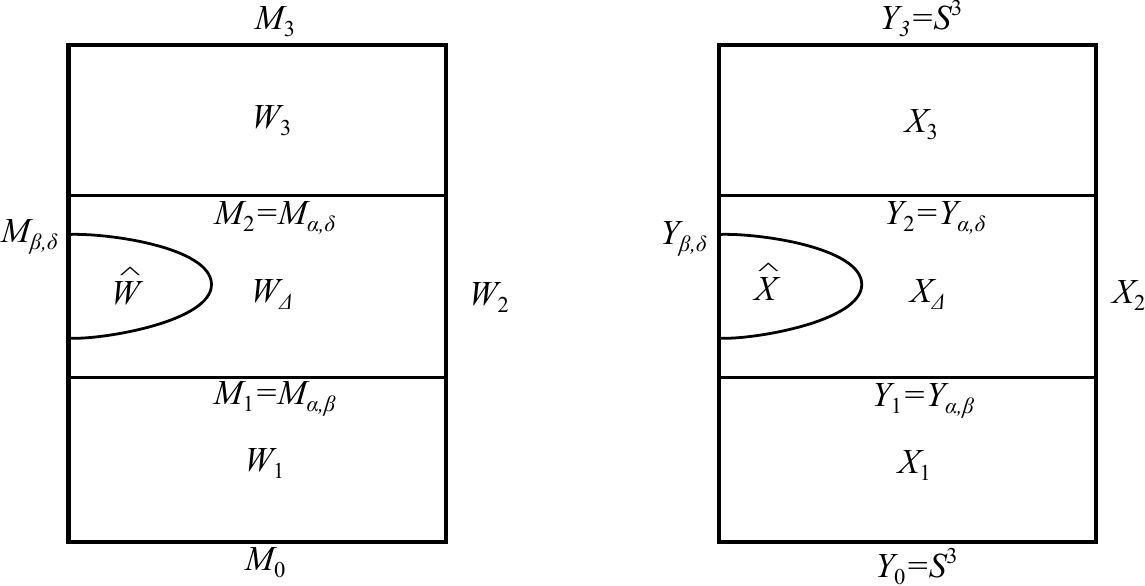}
\caption{The left-hand side shows the sutured cobordism
$\W=(W, Z, [\xi])$, and how we split it into different pieces.
The picture on the right-hand side shows the cobordism of 3-manifolds~$X$,
and the corresponding decomposition into smaller cobordisms.}
\label{fig:notation}
\end{center}
\end{figure}

In this subsection, we fix the notation for the rest of the paper.
Recall that $(K_0, P_0)$ and $(K_1, P_1)$ denote two decorated knots
in $S^3$, and that we have a decorated concordance $\CC = (X,F,\sigma)$
from $(K_0, P_0)$ to $(K_1, P_1)$.

We denote by $\W = (W, Z, [\xi])$ the sutured cobordism $\W(\CC)$ associated
to the knot concordance $\CC$. It follows from the discussion in
Section~\ref{sec:gluingmap} that $\W$ can be thought of as a special
cobordism. The $4$-manifold $W$ can be obtained by attaching
to~$M_0 \times I$ along the interior of $M_0 \times \{1\}$ a sequence of
4-dimensional 1-handles, followed by 2-handles, and finally 3-handles.
We denote the number of $i$-handles by $c_i$ for $i \in \{1,2,3\}$,
and often write $p$ for~$c_1$ and $\ell$ for~$c_2$.
We split the cobordism $\W$ into three parts $\W_1$,
$\W_2$, and $\W_3$, in such a way that $\W_i = (W_i, Z_i, [\xi_i])$ is a cobordism
from $(M_{i-1}, \gamma_{i-1})$ to $(M_i, \gamma_i)$,
and is the trace of the $i$-handle attachments;
see the left-hand side of Figure~\ref{fig:notation}.
Notice that, by construction, $(M_0, \g_0) = S^3(K_0, P_0)$ and $(M_3, \g_3) = S^3(K_1, P_1)$.

In order to represent sutured manifolds, we use Heegaard diagrams
with basepoints. If $w$, $z \in \S \setminus (\bolda \cup \boldb)$, the Heegaard diagram
$\H=(\S, \bolda, \boldb, w,z)$ represents the complement of a knot
in a 3-manifold. In order to recover the sutured Heegaard diagram as originally defined
by the first author~\cite{sutured}, one should remove a small disk around each basepoint.

Let $\cT=(\S, \bolda, \boldb, \boldd,w,z)$
be a doubly-pointed triple diagram for the cobordism~$\W_2$ (see
Section~\ref{sec:triplyperiodic}), where $d = |\bolda| = |\boldb| = |\boldd|$.
Furthermore, suppose that the 2-handles are  attached along an $\ell$-component framed link~$\L$.
Then we further split the manifold $W_2$ into two pieces according to \cite[Proposition~6.6]{cob}:
The piece $\W_{\a,\b,\d} = (W_\triangle, Z_\triangle, \xi_\triangle)$
denotes the sutured manifold cobordism obtained from the triangle
construction in \cite[Sections~5 and~6]{cob}, while $\W_\b(\L) = (\hat W, \hat Z, \hat \xi)$
is a sutured manifold cobordism from
\[
(R_+(\g_1),\partial R_+(\g_1) \times I)\# \left(\#_{i=1}^{d - \ell} (S^2 \times S^1) \right)
\]
to $\emptyset$.
The horizontal boundary of $\hat W$ is the sutured manifold $M_{\b, \d}$,
defined by the diagram $(\S, \boldb,\boldd, w,z)$.
By analogy, we also use the notation $M_{\a,\b} \cong (M_1, \gamma_1)$
and $M_{\a, \d} \cong (M_2, \g_2)$.

We can fill in the vertical boundary of the sutured cobordism~$\W$ by gluing
$D^2 \times S^1 \times I$ along $S^1 \times S^1 \times I$ to~$Z$
such that $S^1 \times \{(1,0)\}$ is glued to a meridian of $K_0$ to obtain
cobordisms of closed 3-manifolds rather than knot complements. In terms
of Heegaard diagrams, this amounts to forgetting the $z$ basepoints.
We denote the closed $3$-manifolds by the letter $Y$ rather than $M$.
As for the cobordisms, we use the letter $X$ instead of the letter $W$.
See the right-hand side of Figure~\ref{fig:notation}.

Lastly, let $S_0 \subseteq M_0$ and $S_3 \subseteq M_3$ be Seifert
surfaces for $K_0$ and $K_1$, respectively. Since $(M_1, \gamma_1)$ is
obtained from $(M_0,\gamma_0)$ by taking connected sums with copies
of $S^1 \times S^2$, the surface $S_0$ also defines a surface $S_1 \subseteq M_1$,
which is contained in the $M_0$ summand of $M_1$.
Analogously, the Seifert surface $S_3$ induces a Seifert surface $S_2 \subseteq M_2$.

\subsection{Definition of the chain map~$f_\CC$}

We now define the chain map~$f_\CC$.
Given an admissible doubly-pointed diagram~$\H = (\S,\bolda,\boldb,w,z)$ for a decorated knot $(Y,K,P)$,
we denote by $\CFh(\H)$ the Heegaard Floer chain complex that counts disks avoiding~$w$ and filtered by~$z$.
Its homology is $\HFh(Y,w)$, while the homology of the associated graded complex $\CFKh(\H)$ is $\HFKh(Y,K,P)$.

Suppose that the 1-handles are attached along $p$ framed pairs of points~$\bP \subset M_0$.
Pick an admissible diagram~$\H^0$ of~$(M_0,\g)$ subordinate to~$\bP$, and let
\[
f_{\H^0,\bP} \colon \CFh(\H^0) \to \CFh(\H^0_{\bP})
\]
be the 1-handle map defined in~\cite[Definition~7.5]{cob}.
The 2-handles are attached along an $\ell$-component framed link~$\L \subset M_1$. Choose
an admissible diagram~$\H^1$ subordinate to $\L$, and let
\[
f_{\H^1,\L} \colon \CFh(\H^1) \to \CFh(\H^1_\L)
\]
be the 2-handle map defined in \cite[Definition~6.8]{cob}, on the chain level.
This map counts triangles that avoid~$w$ but might pass through ~$z$.
Finally, let~$\H^2$ be an admissible diagram of~$(M_2,\g)$ subordinate to framed spheres~$\bS \subset M_2$
corresponding to the 3-handles. The corresponding 3-handle map
\[
f_{\H^2,\bS} \colon \CFh(\H^2) \to \CFh(\H^2_\bS)
\]
was introduced in \cite[Definition~7.8]{cob}.

Given admissible diagrams~$\H$ and~$\H'$ of a sutured manifold~$(M,\g)$, we refer the reader to~\cite[Section~5.2]{cob}
for the definition of the canonical isomorphism
\[
F_{\H,\H'} \colon \SFH(\H) \to \SFH(\H').
\]
We can obtain a chain level representative by connecting~$\H$ and~$\H'$
through a sequence of ambient isotopies, (de)stabilizations, and equivalences of the
attaching sets. If~$(M,\g)$ is complementary to a knot~$(Y,K)$, we can view this as a sequence of
moves on knot diagrams. Each induces a chain homotopy equivalence on~$\CFh$ preserving
the knot filtration according to~\cite{OSz3, Ras}, and induces an isomorphism both on the
homology of the whole complex (isomorphic to~$\HFh(Y)$),
and the homology of the associated graded complex (isomorphic to~$\HFKh(Y,K)$).
Note that the triangle maps corresponding to changing the attaching curves
do not pass over~$w$ but might cross~$z$, so they are in fact naturality
maps for the closed 3-manifold and \emph{not} the knot.
We proved in~\cite{naturality} that the maps on the homology are independent of the sequence of
moves connecting~$\H$ and~$\H'$. We write~$f_{\H,\H'}$ for the chain level representative
of~$F_{\H,\H'}$ described above.
With the above notation in place, we set
\[
f_{\CC} := f_{\H^2,\bS} \circ f_{\H^1_\L, \H^2} \circ f_{\H^1,\L} \circ f_{\H^0_{\bP},\H^1} \circ f_{\H^0, \bP},
\]
from $\CFh(\H^0)$ to $\CFh(\H^2_{\bS})$. Note that each of the diagrams involved in the above formula
can be viewed as a knot diagram after gluing disks along~$s(\g)$ that do not change during the
cobordism, so we can distinguish~$z$ and~$w$ throughout. If we are given
diagrams $\H$ of $(M_0,\g_0)$ and $\H'$ of $(M_3,\g_3)$, then we have to pre- and post-compose
the above map~$f_\CC$ with $f_{\H^2_\bS,\H'}$ and $f_{\H,\H^0}$.

We split the proof of Theorem~\ref{thm:main1} into a number of steps,
and we prove that for each $\W_i$ the knot filtration is preserved.

\subsection{1- and 3-handles}

First, consider the case of the 1-handle attachments along the framed pairs of points~$\bP \subset \text{Int}(M_0)$.
As in Section~\ref{sec:notation}, we write $\W_1 := \W(\bP)$ for the trace of the surgery along~$\bP$;
this is a cobordism from~$(M_0,\g_0)$ to~$(M_1,\g_1)$.
Recall~\cite[Section~7]{cob} that there is an isomorphism
$\spinc(\W_1) \cong \spinc(M_0, \gamma_0)$.
Furthermore, a $\spinc$ structure $\rs \in \spinc(M_1,\g_1)$ extends to~$\W_1$
if and only if $c_1(\rs)$ vanishes on the belt spheres of all the 1-handles.
Given~$\rs \in \spinc(\W)$, we write $\rs_0$ for its restriction to $(M_0,\g_0)$,
and $\rs_1$ for its restriction to $(M_1,\g_1)$.

\begin{lem}
\label{lem:1handles}
Let $\rs_0 \in \spinc(M_0,\gamma_0)$, and let
$\rs_1 \in \spinc(M_1,\gamma_1)$ denote the corresponding $\spinc$ structure.
Then
\[
\langle c_1(\rs_0, t) , [S_0] \rangle = \langle c_1(\rs_1, t), [S_1]\rangle.
\]
\end{lem}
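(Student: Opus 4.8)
The statement to prove is Lemma~\ref{lem:1handles}: that the pairing of $c_1$ with the Seifert surface is unchanged under the canonical identification $\spinc(M_0,\g_0)\cong\spinc(M_1,\g_1)$ coming from the $1$-handle attachments. The key structural fact is that $M_1$ is obtained from $M_0$ by taking an interior connected sum with $p$ copies of $S^1\times S^2$ (attaching a $4$-dimensional $1$-handle to $M_0\times I$ along two balls in $\Int(M_0\times\{1\})$ produces exactly such a connected sum on the outgoing boundary). The plan is to exploit this product-like behaviour: the Seifert surface $S_1\subseteq M_1$ is, by the construction in Section~\ref{sec:notation}, literally the surface $S_0$ pushed into the $M_0$ summand, and the meridional trivialization $t$ on $\partial M_1$ is identified with the one on $\partial M_0$, since the $1$-handles are attached in the interior and do not touch a neighbourhood of $\partial M_0$.

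\textbf{Key steps.}
First I would recall from \cite[Section~7]{cob} that the isomorphism $\spinc(\W_1)\cong\spinc(M_0,\g_0)$ is realized by restriction, and that $\rs_1=\rs|_{M_1}$ is obtained from $\rs_0$ by extending over the $1$-handles; concretely, $\rs_1$ is the unique relative $\spinc$ structure on the connected sum whose restriction to the $M_0$ summand is $\rs_0$ and which is ``trivial'' on each $S^1\times S^2$ summand (i.e.\ $c_1(\rs_1)$ vanishes on the belt spheres). Second, I would invoke the additivity of $c_1$ under connected sum together with the Mayer--Vietoris description of $H_2(M_1,\partial M_1)\cong H_2(M_0,\partial M_0)\oplus\bigoplus H_2(S^1\times S^2)$: the class $[S_1]$ lies entirely in the $H_2(M_0,\partial M_0)$ summand, so pairing $c_1(\rs_1,t)$ against it only sees the restriction of $\rs_1$ to the $M_0$ part, which is $\rs_0$, with its meridional trivialization $t$ unchanged. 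This gives $\langle c_1(\rs_1,t),[S_1]\rangle=\langle c_1(\rs_0,t),[S_0]\rangle$. An alternative, perhaps cleaner, route is to note that both sides are computed from an admissible diagram subordinate to $\bP$ via Lemma~\ref{lem:Alexanderdomain}: the $1$-handle map $f_{\H^0,\bP}$ of \cite[Definition~7.5]{cob} sends a generator $\x$ to $\x\times\{\text{intersection points of the new }\a,\b\text{ curves}\}$, and the domains of disks in the stabilized diagram that matter for the relative Alexander grading are supported away from the stabilization region, so $n_z-n_w$ is literally unchanged; hence the $S_0$- and $S_1$-Alexander gradings agree on corresponding generators, which is the chain-level form of the claimed equality.

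\textbf{Main obstacle.}
The genuine content is bookkeeping: making precise that the Seifert surface $S_0$, after the connected-sum, represents a homology class in $H_2(M_1,\partial M_1)$ that is ``the same'' as $[S_0]$ and that the meridional trivialization survives the stabilization intact. I expect the main subtlety to be handling the trivialization $t$ carefully — one must check that the vector field tangent to $\partial M_1$ in the meridional direction, used to define $c_1(\rs_1,t)$, genuinely corresponds under the connected-sum identification to the one used for $M_0$, since $c_1$ of a relative $\spinc$ structure depends on the choice of boundary trivialization. Because the $1$-handles are attached in $\Int(M_0\times\{1\})$, a collar of $\partial M_0$ is untouched, so this is true, but it deserves an explicit sentence. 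Everything else follows formally from the connected-sum additivity of the first Chern class and the fact that $[S_1]$ has zero component on each $S^1\times S^2$ summand.
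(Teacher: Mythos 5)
Your main argument is essentially the paper's own proof: both $S_0$ and $S_1$ lie in $M_0 \setminus N(\bP)$, the restrictions of $\rs_0$ and $\rs_1$ to that common piece agree, the meridional trivialization $t$ is untouched because the handles are attached in the interior, and naturality of $c_1$ under restriction then gives the equality (your Mayer--Vietoris/connected-sum-additivity phrasing is just a slightly heavier wrapper around this restriction argument). One caveat: your ``alternative, cleaner route'' via Lemma~\ref{lem:Alexanderdomain} and $n_z-n_w$ only controls the \emph{relative} Alexander grading on the stabilized diagram, so by itself it would not prove the stated equality of Chern class evaluations, which is exactly what is needed to pin down the \emph{absolute} grading in Corollary~\ref{cor:1handles}; the restriction argument is the one that does the work.
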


\begin{proof}
This is a consequence of the naturality of the first Chern class and
the fact that both $S_0$ and $S_1$ are actually contained in $M_0 \setminus N(\bP)$.
We can suppose that $S_0$ is properly embedded in
$M_0 \setminus N(\bP)$. By definition,
$S_1$ is a surface contained in $M_0 \setminus N(\bP) \subseteq M_1$
that is isotopic to $S_0$ in $M_0 \setminus N(\bP)$.

Since $S_0$ and $S_1$ are isotopic in $M_0 \setminus N(\bP)$
and $\rs_1 |_{M_0 \setminus N(\bP)} = \rs_0 |_{M_0 \setminus N(\bP)}$,
by the naturality of the first Chern class
\[
\begin{split}
\langle c_1(\rs_1, t) , [S_1] \rangle
&=\langle c_1(\rs_1|_{M_0 \setminus N(\bP)}, t) , [S_1] \rangle \\
&=\langle c_1(\rs_0|_{M_0 \setminus N(\bP)}, t) , [S_0] \rangle \\
&=\langle c_1(\rs_0, t), [S_0]\rangle.
\end{split}
\]
Notice that the trivialization $t$ of the vector field $v_0$ on $\de M_0 = \de M_1$
does not change because the boundary is left unaffected by the surgery.
\end{proof}

\begin{rem}~\label{rem:1handles}
Since $c_1(\rs_1,t)$ vanishes on the belt spheres of the 1-handles,
the above result also holds for an arbitrary Seifert surface~$S_1$.
\end{rem}

\begin{cor}
\label{cor:1handles}
The map $f_{\H^0, \bP}: \CFh(\H^0)  \to \CFh(\H^0_\bP)$ preserves the Alexander
grading (cf.~Definition~\ref{def:Alexandergrading}) with respect to arbitrary
Seifert surfaces $S_0$ and $S_1$; i.e.,
\[
\A_{S_1}(f_{\H^0, \bP}(\x)) = \A_{S_0}(\x)
\]
for any $\x \in \T_{\a^0} \cap \T_{\b^0}$, where $\H^0 = (\S^0,\bolda^0,\boldb^0,w^0,z^0)$.
\end{cor}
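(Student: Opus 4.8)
The plan is to deduce Corollary~\ref{cor:1handles} from Lemma~\ref{lem:1handles} (in the strengthened form of Remark~\ref{rem:1handles}) by tracking how the 1-handle map $f_{\H^0,\bP}$ interacts with the $\spinc$ decomposition and the Alexander grading. First I would recall from \cite[Section~7]{cob} the explicit description of $f_{\H^0,\bP}$: the diagram $\H^0_\bP$ is obtained from $\H^0$ by adding to $\S^0$ one handle per framed pair of points, together with a new $\a$-curve and a new $\b$-curve for each handle that form a small isotopic pair meeting in two points, one of which (the ``top'' intersection point $\theta^+$) is singled out. The map sends $\x \mapsto \x \times \{\theta^+, \ldots, \theta^+\}$, and the basepoints $w,z$ are unchanged. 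In particular $f_{\H^0,\bP}$ is injective on generators, so it makes sense to compare $\A_{S_1}$ of the image with $\A_{S_0}$ of the source.

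The key step is to identify the relative $\spinc$ structures. For a generator $\x$ of $\H^0$, write $\rs(\x) \in \spinc(M_0,\g_0)$; I would argue that $\rs(\x \times \{\theta^+,\ldots,\theta^+\}) \in \spinc(M_1,\g_1)$ is exactly the $\spinc$ structure $\rs_1$ extending $\rs_0 = \rs(\x)$ across the 1-handle cobordism $\W_1$ — this is essentially the content of the isomorphism $\spinc(\W_1) \cong \spinc(M_0,\g_0)$ from \cite[Section~7]{cob}, applied on the level of generators, using that the new small $\a$-$\b$ pair contributes the unique $\spinc$ structure on the connect-summed $S^1 \times S^2$ with trivial first Chern class. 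Then by Remark~\ref{rem:Alexandergrading}, $\A_{S_i}(\cdot) = \tfrac12\langle c_1(\rs(\cdot),t),[S_i]\rangle$, where $t$ is the meridional trivialization of $v_0^\perp$ on $\partial M_0 = \partial M_1$ (unchanged since the surgery is supported in the interior). Combining this with Lemma~\ref{lem:1handles} and Remark~\ref{rem:1handles}, which say precisely that $\langle c_1(\rs_0,t),[S_0]\rangle = \langle c_1(\rs_1,t),[S_1]\rangle$ for arbitrary Seifert surfaces, gives $\A_{S_1}(f_{\H^0,\bP}(\x)) = \A_{S_0}(\x)$.

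I would also note one small bookkeeping point: one must check that $\A_{S_i}$ is well-defined on $\H^0$ and $\H^0_\bP$, i.e.\ that these really are knot diagrams in manifolds that are rational homology spheres after filling the sutures, so that Remark~\ref{rem:Alexandergrading}'s absolute Alexander grading applies. For $M_1$ this follows because filling $\gamma_1$ produces $S^3 \#(\#^{d-\ell}(S^1 \times S^2))$-type behaviour only in the \emph{ambient} closed manifold; but the Seifert surface argument only needs $S_1$ to be a well-defined homology class relative to the boundary in $M_1$, which holds since $H_2(M_1,\partial M_1)$ maps isomorphically from $H_2(M_0,\partial M_0)$ under the inclusion into the $M_0$-summand. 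Alternatively, one may simply phrase everything relatively, as Remark~\ref{rem:Alexandergrading}'s first formula does, and never pass to the absolute grading at all.

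The main obstacle, such as it is, will be pinning down the claim that $f_{\H^0,\bP}$ intertwines $\rs(\x)$ with its extension $\rs_1$ — i.e.\ that the 1-handle map is ``$\spinc$-equivariant'' in the appropriate sense. This is implicit in the way the cobordism map $F_{\W_1,\rs}$ is constructed in \cite[Section~7]{cob} (it is a sum over generators in each $\spinc$ class, and the handle addition does not create or destroy $\spinc$ components beyond the evident extension), but making it precise requires unwinding the correspondence between generators of $\H^0$ and $\H^0_\bP$ and the affine identification $\spinc(\W_1) \cong \spinc(M_0,\g_0)$. Once that correspondence is in hand, everything else is a direct substitution into Remark~\ref{rem:Alexandergrading} and Lemma~\ref{lem:1handles}.
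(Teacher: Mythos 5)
Your proposal is correct and follows essentially the same route as the paper: the paper's proof consists precisely of citing Lemma~\ref{lem:1handles}, Remark~\ref{rem:1handles}, and the fact that $\rs(f_{\H^0,\bP}(\x))$ is the relative $\spinc$ structure on $(M_1,\g_1)$ induced by $\rs(\x)$, which is exactly the ``$\spinc$-equivariance'' you spell out via the explicit description $\x \mapsto \x \times \{\theta^+,\ldots,\theta^+\}$. Your extra bookkeeping paragraph is inessential (and its claim that $H_2(M_0,\partial M_0)\to H_2(M_1,\partial M_1)$ is an isomorphism is not quite right, since connect-summing with $S^1\times S^2$ enlarges $H_2$), but the issue it worries about is already disposed of by Remark~\ref{rem:1handles}, which you invoke correctly in the main argument.
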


\begin{proof}
This is a straightforward consequence of Lemma~\ref{lem:1handles}, Remark~\ref{rem:1handles},
and the fact that the relative $\spinc$ structure induced by $\rs(\x)$
on $(M_1, \gamma)$ is exactly $\rs(f_{\H^0, \bP}(\x))$.
\end{proof}

A dual reasoning gives the following results for the map $f_{\H^2, \bS}$,
which are analogous to Lemma~\ref{lem:1handles} and Corollary~\ref{cor:1handles}.

\begin{lem}
\label{lem:3handles}
Let $\rs_3 \in \spinc(M_3,\gamma_3)$, and let
$\rs_2 \in \spinc(M_2,\gamma_2)$ denote the corresponding $\spinc$ structure.
Then
\[
\langle c_1(\rs_2, t) , [S_2] \rangle = \langle c_1(\rs_3, t), [S_3]\rangle.
\]
\end{lem}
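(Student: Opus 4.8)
The statement is the ``dual'' of Lemma~\ref{lem:1handles}, with the $3$-handle cobordism $\W_3$ playing the role that the $1$-handle cobordism $\W_1$ played there, so the plan is to mirror that argument after turning the cobordism around. First I would recall that $\W_3$ is the trace of surgery along a collection of framed $2$-spheres $\bS \subset \Int(M_2)$, and that $M_3$ is obtained from $M_2$ by removing a neighbourhood $N(\bS) = \bigsqcup (S^2 \times D^1)$ and regluing. Dually to \cite[Section~7]{cob}, there is an isomorphism $\spinc(\W_3) \cong \spinc(M_3,\g_3)$, and a relative $\spinc$ structure $\rs_2 \in \spinc(M_2,\g_2)$ extends over $\W_3$ precisely when $c_1(\rs_2)$ vanishes on the $2$-spheres of $\bS$; in particular $\rs_2$ is the restriction of the $\spinc$ structure $\rs_3$ determined by the hypothesis.

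The geometric input is that both $S_2$ and $S_3$ can be taken disjoint from the surgery region. Concretely, since the $3$-handles are attached along the interior of $M_2$ and the Seifert surface $S_2 \subseteq M_2$ is (as chosen in Section~\ref{sec:notation}) contained in the $M_0$-summand away from the connected-sum regions, we may isotope $S_2$ off $N(\bS)$, so that $S_2$ is properly embedded in $M_2 \setminus N(\bS)$. But $M_2 \setminus N(\bS)$ is a common submanifold of both $M_2$ and $M_3$: reglueing $D^3$-pieces to form $M_3$ does not touch it. Hence $S_2$ also defines a properly embedded surface in $M_3$, and this is exactly the surface $S_3$ (it is a Seifert surface for $K_1$ because the boundary $\partial M_3 = \partial M_2$ and the meridian/longitude data are unchanged by surgery on spheres in the interior). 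So $[S_2]$ and $[S_3]$ are represented by the \emph{same} surface sitting inside $M_2 \setminus N(\bS) \subseteq M_2 \cap M_3$.

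With this in place the computation is formal. Since $\rs_3|_{M_2 \setminus N(\bS)} = \rs_2|_{M_2 \setminus N(\bS)}$ and the trivialization $t$ of $v_0^\perp$ on $\partial M_2 = \partial M_3$ is unchanged (the boundary is untouched by the surgery), naturality of the relative first Chern class gives
\[
\langle c_1(\rs_2,t),[S_2]\rangle
= \langle c_1(\rs_2|_{M_2\setminus N(\bS)},t),[S_2]\rangle
= \langle c_1(\rs_3|_{M_2\setminus N(\bS)},t),[S_3]\rangle
= \langle c_1(\rs_3,t),[S_3]\rangle,
\]
which is the assertion. As in Remark~\ref{rem:1handles}, since $c_1(\rs_2,t)$ vanishes on the belt $2$-spheres of the $3$-handles, the equality in fact holds for an arbitrary Seifert surface $S_3$ of $K_1$, not only the one induced from $S_2$.

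\textbf{Main obstacle.} None of the steps is hard; the only point requiring care is the bookkeeping that identifies $S_3$ with the surface induced by $S_2$ and checks that it genuinely is a Seifert surface for $K_1$ — i.e.\ that the gluing/filling conventions of Section~\ref{sec:notation} are consistent under the handle attachment. Once that identification is pinned down, the proof is a one-line application of naturality of Chern classes, exactly parallel to Lemma~\ref{lem:1handles}. (One should also verify the dual statement $\spinc(\W_3)\cong\spinc(M_3,\g_3)$ and the vanishing condition on the surgery spheres, but these are the standard $3$-handle analogues of the $1$-handle facts recalled from \cite[Section~7]{cob}.)
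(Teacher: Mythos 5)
Your proposal is correct and is essentially the paper's own argument: the paper gives no separate proof of this lemma, stating only that it follows by ``dual reasoning'' from Lemma~\ref{lem:1handles}, and your write-up is exactly that dualization (with $S_2$ induced from $S_3$ lying in the complement of the surgery region, so naturality of $c_1$ gives the equality, the trivialization $t$ being untouched). Two harmless slips worth fixing: $S_2$ lies in the $M_3$-summand of $M_2$ (not an ``$M_0$-summand''), and in your closing remark the relevant spheres are the \emph{attaching} $2$-spheres of the $3$-handles (their belt spheres are $0$-spheres), the resulting independence being of the choice of $S_2$, as in Corollary~\ref{cor:3handles}.
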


\begin{cor}
\label{cor:3handles}
The map $f_{\H^2, \bS}: \CFh(\H^2)  \to \CFh(\H^2_\bS)$ preserves the Alexander
grading with respect to arbitrary Seifert surfaces $S_2$ and $S_3$; i.e.,
\[
\A_{S_3}(f_{\H^2, \bS}(\x)) = \A_{S_2}(\x)
\]
for any $\x \in \T_{\a^2} \cap \T_{\b^2}$ such that $f_{\H^2, \bS}(\x) \neq 0$,
where $\H^2 = (\S^2,\bolda^2,\boldb^2,w^2,z^2)$.
\end{cor}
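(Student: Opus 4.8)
The plan is to run the argument of Corollary~\ref{cor:1handles} with the cobordism~$\W_3$ read upside down. First I would set up the dual picture: attaching $4$-dimensional $3$-handles to~$M_2$ along the framed $2$-spheres~$\bS$ is the same as attaching $1$-handles to~$M_3$, so~$M_2$ is a connected sum of~$M_3$ with copies of~$S^1\times S^2$, the components of~$\bS$ generate $H_2$ of the complement of~$K_1$ in~$M_2$ (attaching the $3$-handles kills exactly these classes and yields~$S^3$), and, dually to \cite[Section~7]{cob}, $\spinc(\W_3)\cong\spinc(M_3,\g_3)$, with a relative $\spinc$ structure on~$(M_2,\g_2)$ extending over~$\W_3$ precisely when its first Chern class vanishes on every component of~$\bS$. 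This is the correspondence $\rs_2\leftrightarrow\rs_3$ of Lemma~\ref{lem:3handles}, which I would take as given (it is itself the dual of Lemma~\ref{lem:1handles}, proved by naturality of~$c_1$ since both~$S_2$ and~$S_3$ lie in the common piece $M_2\setminus N(\bS)$, on which~$\rs_2$ and~$\rs_3$ agree).

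Next I would handle the hypothesis $f_{\H^2, \bS}(\x)\neq 0$, which is the only genuinely new ingredient compared with the $1$-handle case, where the $1$-handle map is injective and no such hypothesis is needed. If $f_{\H^2, \bS}(\x)=0$ there is nothing to prove. Otherwise, from the description of the $3$-handle map in \cite[Definition~7.8]{cob} as the dual of the $1$-handle map, nonvanishing of $f_{\H^2, \bS}(\x)$ forces $\rs(\x)\in\spinc(M_2,\g_2)$ to extend over~$\W_3$ --- equivalently, $c_1(\rs(\x))$ vanishes on every component of~$\bS$ --- and the relative $\spinc$ structure $\rs(f_{\H^2,\bS}(\x))\in\spinc(M_3,\g_3)$ of the output is exactly the one corresponding to~$\rs(\x)$ in the sense of Lemma~\ref{lem:3handles}.

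With this in place the computation is immediate. Using the Chern-class formula for the Alexander grading (Remark~\ref{rem:Alexandergrading}), with~$t$ the meridional trivialization of~$v_0^\perp$, which the surgery leaves untouched, Lemma~\ref{lem:3handles} applied to $\rs_2=\rs(\x)$ and $\rs_3=\rs(f_{\H^2,\bS}(\x))$ gives $\langle c_1(\rs(\x),t),[S_2]\rangle=\langle c_1(\rs(f_{\H^2,\bS}(\x)),t),[S_3]\rangle$, i.e.\ $\A_{S_2}(\x)=\A_{S_3}(f_{\H^2,\bS}(\x))$ for the surfaces fixed in Section~\ref{sec:notation}. To upgrade to arbitrary Seifert surfaces I would establish the $3$-handle analogue of Remark~\ref{rem:1handles}: the $3$-manifold underlying~$(M_3,\g_3)$ is~$S^3$, so by Remark~\ref{rem:Alexandergrading} $\A_{S_3}(f_{\H^2,\bS}(\x))$ is independent of~$S_3$; and any two Seifert surfaces of~$K_1$ in~$M_2$ differ by a class in $H_2(M_2\setminus K_1)$, which is an integral combination of the~$[\bS_i]$, on which $c_1(\rs(\x),t)$ vanishes because~$\rs(\x)$ extends over~$\W_3$, so $\A_{S_2}(\x)$ is independent of~$S_2$ as well. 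The main obstacle is therefore not the computation but the bookkeeping around $f_{\H^2,\bS}(\x)\neq 0$: one has to check carefully that nonvanishing of the dual ($3$-handle) map is equivalent to extendability of~$\rs(\x)$ over~$\W_3$, and that the output then carries the $\spinc$ structure named in Lemma~\ref{lem:3handles}; granting this, the rest transcribes verbatim from the $1$-handle argument.
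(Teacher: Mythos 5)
Your proposal is correct and follows essentially the same route as the paper, which simply asserts that "a dual reasoning" to Lemma~\ref{lem:1handles}, Remark~\ref{rem:1handles}, and Corollary~\ref{cor:1handles} gives the result: you dualize the $1$-handle argument via naturality of $c_1$, use the extendability of $\rs(\x)$ over $\W_3$ (forced by $f_{\H^2,\bS}(\x)\neq 0$) to get independence of the Seifert surfaces, and identify the output's relative $\spinc$ structure with the one in Lemma~\ref{lem:3handles}. Your write-up merely makes explicit the bookkeeping that the paper leaves implicit, so no gap.
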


\subsection{2-handles}

The proof that the Alexander grading is preserved under the attachment of the
2-handles is less straightforward than in the case of 1-handles and 3-handles.

\begin{lem}
\label{lem:2handles1}
Let $\CC$ be a decorated concordance from $(K_0,P_0)$ to $(K_1,P_1)$.
With the notation of Section~\ref{sec:notation},
let $\W_2$ denote the $2$-handle cobordism from $(M_1, \gamma_1)$
to $(M_2, \gamma_2)$ obtained by surgery along a framed link~$\L$,
and let $S_1$ and $S_2$ be corresponding Seifert surfaces.
Then there is an admissible doubly pointed triple diagram $(\S,\bolda,\boldb,\boldd,w,z)$
subordinate to a bouquet for~$\L$ as follows: If $\x \in \T_\a \cap \T_\b$ is such
that $\s(\x) \in \spinc(Y_{\a, \b})$ extends to $X_1$, then
for any $\y \in \T_\a \cap \T_\d$ that appears with non-zero coefficient
in $f_{\H^1, \L}(\x)$, and such that $\s(\y) \in \spinc(Y_{\a, \d})$
extends to $X_3$, we have
\[
\F_{S_2}(\y) \le \F_{S_1}(\x).
\]
Moreover, if $\psi$ is a holomorphic triangle connecting $\x$, $\theta$
(the top-graded generator of $\CFh(\S,\boldb,\boldd,w,z)$),
and $\y$ that does not cross $w$, then
\begin{equation}
\label{eq:shift}
\F_{S_2}(\y) = \F_{S_1}(\x) - n_z(\psi).
\end{equation}
\end{lem}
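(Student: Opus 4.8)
The plan is to reduce everything to the formula \eqref{eq:shift}, which is the heart of the statement; the inequality $\F_{S_2}(\y) \le \F_{S_1}(\x)$ then follows immediately from nonnegativity of $n_z(\psi)$ for holomorphic triangles, exactly as in the analysis of the 2-handle map in \cite[Section~6]{cob}. So the real content is: (i) choosing the triple diagram well, and (ii) identifying the Alexander-grading shift under the triangle map with $-n_z(\psi)$. For step (i), I would invoke \cite[Proposition~6.6]{cob} to pick a doubly-pointed triple diagram $(\S,\bolda,\boldb,\boldd,w,z)$ subordinate to a bouquet for $\L$, which exists by admissibility arguments already in \cite{cob}; the extra data needed here over the closed case is that $z$ can be placed so that $(\S,\bolda,\boldb,w,z)$ and $(\S,\bolda,\boldd,w,z)$ represent the decorated knot complements $(M_1,\g_1)$ and $(M_2,\g_2)$, and $(\S,\boldb,\boldd,w,z)$ represents the connected sum of $(S^2\times S^1)$'s (with $\theta$ the top generator), all compatibly with the Seifert surfaces $S_1$, $S_2$ from Section~\ref{sec:notation}.

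For step (ii), the key computational input is the first-Chern-class / $\spinc$ description of the triangle map together with Lemma~\ref{lem:Alexanderdomain} (and its relative-$\spinc$ reformulation in Remark~\ref{rem:Alexandergrading}). First I would recall that, for a holomorphic triangle $\psi \in \pi_2(\x,\theta,\y)$, the relative $\spinc$ structures satisfy the standard additivity: $\underline{\s}_z(\psi)$ restricts to $\rs(\x)$ on $M_1$ and to $\rs(\y)$ on $M_2$ (using that $\s(\x)$ extends to $X_1$ and $\s(\y)$ extends to $X_3$, so there is a well-defined relative $\spinc$ structure on $W_2$ connecting them). Then, arguing as in Lemma~\ref{lem:samespinc} — comparing $\langle c_1(\rs(\x),t),[S_1]\rangle$ and $\langle c_1(\rs(\y),t),[S_2]\rangle$ via the relative Chern class on $W_2$ and the fact that $[S_1]$ and $[S_2]$ are the classes meeting the core cylinder $m = S^1\times\{\mathrm{pt}\}\times I$ once — the difference of the two evaluations is computed by the relative Chern class paired against $[S_1]-[S_2]$, which is exactly $2\,n_z(\psi)$ by a standard domain/count argument (the $z$-multiplicity measures how the filtration is dropped when the triangle is forced to cross $z$). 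Dividing by $2$ gives $\A_{S_2}(\y) = \A_{S_1}(\x) - n_z(\psi)$, i.e. \eqref{eq:shift}.

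The main obstacle I expect is bookkeeping the trivializations and the relative $\spinc$ additivity carefully in the \emph{doubly-pointed, sutured} setting: one must make sure that the meridional trivialization $t$ of $v_0^\perp$ used for $M_1$ and $M_2$ is literally the \emph{same} under the identification (which holds because the vertical boundary $Z$ is a product, as already exploited in Lemmas~\ref{lem:easyspinc}–\ref{lem:1handles}), and that the triangle $\psi$ is not allowed to cross $w$ — so only the $z$-multiplicity enters — while the "naturality triangle maps" for the closed 3-manifold that may cross $z$ do not interfere because they were already handled when passing from $\H^1_\L$ to $\H^2$. Once the relative-$\spinc$ additivity formula $\underline{\s}_z(\psi)|_{M_1}=\rs(\x)$, $\underline{\s}_z(\psi)|_{M_2}=\rs(\y)$ is in place, the rest is the same linear-algebra argument as Lemma~\ref{lem:samespinc}, and the inequality is then automatic. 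I would also remark that summing \eqref{eq:shift} over the triangles contributing to $f_{\H^1,\L}(\x)$ and using that every such triangle has $n_z(\psi)\ge 0$ yields the stated filtration inequality for every $\y$ appearing with nonzero coefficient.
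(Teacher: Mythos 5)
Your reduction of the inequality to Equation~\eqref{eq:shift} is fine and matches the paper, and you correctly identify that relative $\spinc$ restriction and a Chern-class comparison in the spirit of Lemma~\ref{lem:samespinc} are the relevant tools. However, the central step of your argument is asserted rather than proved: you claim that the difference $\langle c_1(\rs(\x),t),[S_1]\rangle - \langle c_1(\rs(\y),t),[S_2]\rangle$, computed from the relative Chern class on $W_2$ paired against $[S_1]-[S_2]$, ``is exactly $2\,n_z(\psi)$ by a standard domain/count argument.'' There is no such standard argument available here; this identity is precisely the content of the lemma. In particular, your formula implicitly requires that the quantity $n_z(\psi)-n_w(\psi)$ depends only on $\x$ and $\y$ and not on the choice of triangle $\psi\in\pi_2(\x,\theta,\y)$, i.e.\ that $n_z-n_w$ vanishes on all triply periodic domains. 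That is not automatic: the paper proves it via Proposition~\ref{prop:triplyperiodic} (every triply periodic domain of this triple diagram is a sum of doubly periodic ones), which itself relies on $H_1(X)=H_2(X)=0$ for the \emph{whole} concordance complement through a Mayer--Vietoris argument, combined with Proposition~\ref{prop:doublyperiodic} for null-homologous knots. Your proposal never addresses this well-definedness issue.

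The second problem is the attempt to confine the Chern-class comparison to $W_2$ alone. The proof of Lemma~\ref{lem:samespinc} uses in an essential way that $H_2(W,Z)\cong\Z$ and that $[S_0]$ and $[S_1]$ both represent its generator, which is a consequence of $H_1(X)=H_2(X)=0$ for the full concordance; for the $2$-handle cobordism $W_2$ by itself, $H_2(W_2,Z_2)$ is in general larger, both Seifert surfaces meeting the cylinder $m$ once does not force $[S_1]=[S_2]$ there, and the Alexander grading shift of a general $2$-handle cobordism is genuinely nonzero. In the paper the constant shift is pinned to zero only by a global argument: one constructs an explicit small triangle with $n_z=n_w=0$ (after correcting its $\spinc$ structures by finger moves along a curve supplied by Lemma~\ref{lem:H1Sigma}, which again uses the vanishing of $H_1(X)$ and $H_2(X)$), shows that $\rs(\psi)$ extends over all of $\W$, and then transfers the Chern-class evaluations through the $1$- and $3$-handle pieces via Lemmas~\ref{lem:1handles} and~\ref{lem:3handles} before applying Lemma~\ref{lem:samespinc}. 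Without some substitute for these steps -- the periodic-domain analysis and the global pinning of the constant -- your outline does not yield Equation~\eqref{eq:shift}.
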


Notice that, in Lemma~\ref{lem:2handles1}, we consider
ordinary $\spinc$ structures rather than relative ones.
Recall that relative $\spinc$ structures are defined for sutured cobordisms,
which we denote by the letter $\W$, while ordinary $\spinc$ structures
are defined for cobordisms of 3-manifolds, which we denote by the letter
$X$, cf.~Figure~\ref{fig:notation}.

\begin{idea}
Consider an admissible Heegaard triple diagram $(\S,\bolda,\boldb,\boldd)$ subordinate to a bouquet
for a framed link $\L$, as explained in \cite[Section~6]{cob}.
Suppose that $\x \in \T_\a \cap \T_\b$ is such that $\s(\x) \in \spinc(Y_{\a, \b})$ extends to $X_1$.
Let $\theta \in \T_\b \cap \T_\d$ be the top-graded generator of $\CFh(\S,\boldb,\boldd)$,
and let $\y \in \T_\a \cap \T_\d$ be such that $\s(\y) \in \spinc(Y_{\a, \d})$ extends to $X_3$.
Given a holomorphic triangle $\psi \in \pi_2(\x,\theta,\y)$, let
\[
c = \A_{S_2}(\y) - \A_{S_1}(\x) + n_z(\psi) - n_w(\psi).
\]
First, we prove that $c$ is independent of $\psi$, $\x$, and $\y$.
If $\psi_1$, $\psi_2 \in \pi_2(\x,\theta,\y)$, then the domain
$\D(\psi_1)-\D(\psi_2)$ is triply periodic.
If we prove that, for every triply periodic domain $D$, we have
\[
n_z(D) - n_w(D) = 0,
\]
then $c$ is independent of $\psi$. For this reason, the next subsection is devoted
to the study of triply periodic domains in the setting of Lemma~\ref{lem:2handles1}.

Given different intersection points $\x' \in \T_\a \cap \T_\b$ and $\y' \in \T_\a \cap \T_\d$
such that $\s(\x') \in \spinc(Y_{\a, \b})$ extends to $X_1$ and
$\s(\y') \in \spinc(Y_{\a, \d})$ extends to $X_3$,
there are domains $D_\x$ connecting $\x$ with $\x'$ and $D_\y$ connecting
$\y$ with $\y'$ that do not pass through~$w$
(but might have non-trivial multiplicities at~$z$). Adding these domains to $D(\psi)$, we get
a triangle domain connecting $\x'$, $\theta$, and $\y'$ with the same~$c$ by Lemma~\ref{lem:Alexanderdomain}.

Then we show that $c = 0$ by isotoping~$\bolda$ to obtain a diagram where such
$\x$, $\y$, and $\psi$ as above exist, and invoke Lemma~\ref{lem:samespinc}.
Finally, if $\psi$ appears in the surgery map~$f_{\H^1, \L}(\x)$, then $n_w(\psi) = 0$
and it has a pseudo-holomorphic representative, so $n_z(\psi) \ge 0$.
Consequently, $A_{S_2}(\y) \le A_{S_1}(\x)$, as desired.
\qed
\end{idea}

We now explain the missing details in the above outline.

\subsection{Triply periodic domains}
\label{sec:triplyperiodic}


The following argument was motivated by the work of Manolescu and
Ozsv\'ath~\cite{manolescu2007khovanov}.

\begin{defn}
A \emph{doubly pointed triple Heegaard diagram} is a tuple
\[
\cT = (\S,\bolda, \boldb, \boldd, w, z),
\]
where $\S$ is a closed,
oriented surface, and there is an integer $d \ge 0$ such that the sets
$\bolda$, $\boldb$ and $\boldd$ all consist of $d$ pairwise disjoint
simple closed curves in $\S \setminus \left\{w,z\right\}$ that are linearly
independent in $H_1(\Sigma\setminus\left\{w,z\right\})$.

We denote by $Y_{\a,\b}$, $Y_{\a,\d}$, and
$Y_{\b,\d}$ the 3-manifolds represented by the Heegaard diagrams
$(\S, \bolda, \boldb)$, $(\S,\bolda,\boldd)$, and
$(\S,\boldb,\boldd)$, respectively.
\end{defn}

\begin{defn}
Let $\cT = (\S,\bolda, \boldb, \boldd, w, z)$ be a doubly pointed triple
Heegaard diagram.
Let $D_1, \ldots, D_l$ denote the closures of the components of
$\S \setminus(\bolda \cup \boldb \cup \boldd)$.
Then the set of \emph{domains} in $\cT$ is
\[
D(\cT) = \Z \langle\, D_1, \ldots, D_l \,\rangle.
\]
We denote by $n_z(\D)$ (respectively $n_w(\D)$) the multiplicity of a domain $\D \in D(\cT)$ in the
region $D_i$ that contains $z$ (respectively~$w$).

A \emph{triply periodic domain} is an element $\P \in D(\cT)$
such that $\partial \P$ is a $\Z$-linear combination of curves in $\bolda \cup \boldb \cup \boldd$.
We denote the set of triply periodic domains by $\Pi_{\bolda,\boldb,\boldd}$.

A \emph{doubly periodic domain} is an element $\P \in D(\cT)$
such that $\partial \P$ is a $\Z$-linear combination of curves in either $\bolda \cup \boldb$,
or in $\boldb \cup \boldd$, or in $\bolda \cup \boldd$.
We denote the set of the three types of doubly periodic domains by $\Pi_{\a,\b}$,
$\Pi_{\a,\d}$, and $\Pi_{\b,\d}$, respectively.
\end{defn}

The following result states that every triply periodic domain in the diagram
describing the surgery map for~$\W_2$ can be written as a sum of doubly periodic
domains.

\begin{prop}
\label{prop:triplyperiodic}
Let $(\S, \bolda, \boldb, \boldd)$ denote a Heegaard
diagram associated to the cobordism $X_2$. Then
\[
\Pi_{\a,\b,\d} = \Pi_{\a,\b} + \Pi_{\a,\d} + \Pi_{\b,\d}.
\]
\end{prop}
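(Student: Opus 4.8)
The plan is to analyze the triply periodic domains of the triple diagram $(\S,\bolda,\boldb,\boldd)$ subordinate to a bouquet for $\L$ by understanding the homology of the associated cobordism $X_2$. Recall that $X_2$ is obtained from $Y_{\a,\b}$ by attaching $\ell$ two-handles along $\L$, and that, after filling in the vertical boundary, the manifolds $Y_{\a,\b}$, $Y_{\a,\d}$ are diffeomorphic to $S^3$ while $Y_{\b,\d}$ is a connected sum $\#_{d-\ell}(S^2\times S^1)$. The key point is that the span of the boundaries of periodic domains of a Heegaard (multi-)diagram computes a piece of the second homology of the corresponding manifold. Concretely, $\partial\colon \Pi_{\a,\b,\d} \to H_1(\S)$ followed by the quotient identifies $\Pi_{\a,\b,\d}$ with $H_2$ of the closed $4$-manifold $\widehat{X}_2$ obtained by capping off all three boundary components of $X_2$ with the appropriate handlebodies; similarly $\Pi_{\a,\b}$, $\Pi_{\a,\d}$, $\Pi_{\b,\d}$ correspond to $H_2$ of $Y_{\a,\b}$, $Y_{\a,\d}$, $Y_{\b,\d}$ sitting inside.

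First I would set up the exact correspondence: an element $\P \in D(\cT)$ is determined up to scalar by its multiplicities, and $\P$ is triply periodic iff $\partial\P \in \Z\langle\bolda\rangle + \Z\langle\boldb\rangle + \Z\langle\boldd\rangle$ inside $H_1(\S;\Z)$. The group $\Pi_{\a,\b,\d}$ fits into an exact sequence relating it to $\ker$ of the map $H_1(\S) \to H_1(\S)/(\langle\bolda\rangle+\langle\boldb\rangle+\langle\boldd\rangle)$ together with the constraint that $\P$ itself exists as a $2$-chain, which is automatic since $\S$ is closed and the boundary has the required form. Following Ozsv\'ath--Szab\'o's computation (e.g.\ the discussion of periodic domains and $\pi_2$ of triple diagrams in \cite{OSz} and the model in \cite{manolescu2007khovanov}), one identifies $\Pi_{\a,\b,\d}$ with a subgroup of $H_2$ of the $4$-manifold built from the compression bodies $U_{\a\b}, U_{\b\d}, U_{\a\d}$ glued along $\S$. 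Likewise $\Pi_{\a,\b}$ is identified with $H_2(Y_{\a,\b};\Z)$ inside it, and analogously for the other two pairs.

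Then I would run the Mayer--Vietoris / handle-attachment argument. Since $X_2$ is a cobordism obtained by $2$-handle attachments on $S^3$, its homology is simple: $H_1(X_2)=0$, and $H_2(X_2)\cong \Z^\ell$ generated by the cores of the $2$-handles capped by Seifert surfaces of $\L$ together with the $2$-handle belt classes. Capping off the three boundary pieces and running Mayer--Vietoris for the union of compression bodies over $\S$, the second homology of the resulting closed object decomposes as a sum of contributions from the three pairwise pieces --- this is exactly where the $S^3$'s contribute nothing new and the $\#(S^2\times S^1)$ summand in $Y_{\b,\d}$ accounts for the $d-\ell$ ``leftover'' curves. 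Translating back through the dictionary between $H_2$ and periodic domains yields $\Pi_{\a,\b,\d} = \Pi_{\a,\b} + \Pi_{\a,\d} + \Pi_{\b,\d}$; note that the reverse inclusion $\supseteq$ is trivial, so only $\subseteq$ needs the homological input.

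The main obstacle I anticipate is making the identification ``periodic domains $\leftrightarrow$ $H_2$'' completely precise in the \emph{sutured}/doubly-pointed setting with two basepoints, since the regions containing $w$ and $z$ must be handled consistently (periodic domains here are allowed arbitrary multiplicity at $z$ but the diagram is for the knot complement, so one must be careful whether $\S\setminus\{w,z\}$ or $\S$ is the relevant surface). A clean way around this is to observe that a triply periodic domain on $(\S,\bolda,\boldb,\boldd,w,z)$, forgetting $w$ and $z$, is just an ordinary triply periodic domain on the closed triple diagram, and that $X_2$ (rather than $\W_2$) is precisely the $4$-manifold whose homology governs these; the two-basepoint bookkeeping only re-enters when one later extracts $n_z(D)-n_w(D)$, which is the content of the subsequent corollary, not of this proposition. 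So I would phrase the proof entirely in terms of the closed diagram and the cobordism $X_2$, reducing to the standard fact that for a triple Heegaard diagram the triply periodic domains are spanned by the doubly periodic ones whenever the relevant first homology vanishes, which holds here because each of $Y_{\a,\b}$ and $Y_{\a,\d}$ is $S^3$ and $X_2$ has $H_1=0$.
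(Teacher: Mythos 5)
Your overall skeleton --- identify $\Pi_{\a,\b}$, $\Pi_{\a,\d}$, $\Pi_{\b,\d}$ and $\Pi_{\a,\b,\d}$ with $\Z \oplus H_2$ of the corresponding $3$-manifolds and of the triangle $4$-manifold, and then deduce the proposition from a surjectivity statement in second homology via Mayer--Vietoris and excision --- is the same as the paper's. The gap is in the homological input you feed into it. In the setting of this proposition, $X_2$ is only the middle piece of a handle decomposition of the concordance exterior $X$, which in general has $c_1 > 0$ one-handles and $c_3 > 0$ three-handles; hence $Y_{\a,\b} \cong \#_{c_1}(S^1 \times S^2)$ and $Y_{\a,\d} \cong \#_{c_3}(S^1 \times S^2)$, not $S^3$, the link $\L$ lives in $\#_{c_1}(S^1\times S^2)$, and $H_2(X_2) \cong \Z^\ell$ is not available. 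More seriously, the ``standard fact'' you invoke at the end --- that triply periodic domains are spanned by doubly periodic ones ``whenever the relevant first homology vanishes'' --- is false as stated, and it is essentially the statement to be proven. A concrete counterexample: take the genus-one triple diagram $(T^2,\a,\b,\d)$ with $\a,\b,\d$ of slopes $0,\infty,1$, subordinate to a bouquet for the $(\pm 1)$-framed unknot. Then $Y_{\a,\b} \cong Y_{\a,\d} \cong Y_{\b,\d} \cong S^3$ and $H_1(X_2)=0$, yet the relation $\d - \a - \b = 0$ in $H_1(T^2)$ produces a triply periodic domain that is not a sum of doubly periodic ones (each pairwise group consists only of multiples of $[\S]$). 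The obstruction is $H_2$ of the trace, which here is $\Z$.

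The condition that actually makes the argument work is surjectivity of
\[
\chi \colon H_2(Y_{\a,\b}) \oplus H_2(Y_{\a,\d}) \oplus H_2(Y_{\b,\d}) \To H_2(X_\triangle),
\]
and this cannot be extracted from $X_2$ alone; the paper obtains it by including the pair $(X_\triangle,\, Y_{\a,\b} \sqcup Y_{\a,\d} \sqcup Y_{\b,\d})$ into $(X,\, X_1 \sqcup X_3 \sqcup \hat X)$, using excision, and then exploiting $H_1(X) = H_2(X) = 0$ for the \emph{entire} concordance exterior $X$ (equivalently, surjectivity of $H_2(X_1) \oplus H_2(X_3) \oplus H_2(\hat X) \to H_2(X)$, cf.~Remark~\ref{rem:aftertp}). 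So your reduction must be repaired in two places: state and use the identification $\Pi_{\a,\b,\d} \cong \Z \oplus H_2(X_\triangle)$ (with the $\Z$-summand given by $n_z$, and compatibly for the pairwise groups, as in Lemmas~\ref{lem:triplyperiodic1} and~\ref{lem:triplyperiodic2} --- note $X_\triangle$ is not a closed manifold obtained by capping off $X_2$), and replace the appeal to vanishing first homology of $X_2$ and to the ends being $S^3$ by the global vanishing $H_1(X) = H_2(X) = 0$ coming from the concordance hypothesis.
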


Given a triple diagram associated to a surgery on an $\ell$-component link $\L$,
one can construct a 4-manifold $X_\triangle$ as in \cite[Section~2.2]{OSz10}
(see \cite[Section~5]{cob} for the analogous construction in the sutured setting).
In $\partial X_\triangle$ naturally sit $Y_{\a,\b}$, $Y_{\a,\d}$,
and~$Y_{\b,\d}$ that are the 3-manifolds defined by the Heegaard diagrams
$(\S, \bolda,\boldb)$, $(\S,\bolda,\boldd)$, and $(\S,\boldb,\boldd)$, respectively.
The cobordism~$X_2$ corresponding to the attachment of the 2-handles is
obtained by gluing the 4-manifold $\hat X = \natural_{i=1}^\ell(S^1 \times D^3)$ to $X_\triangle$
along $Y_{\b,\d} \cong \#_{i=1}^\ell(S^1 \times S^2)$.

\begin{lem}[{\cite[Propositions~2.15 and~8.3]{OSz}}] \label{lem:triplyperiodic1}
Given a pointed triple Heegaard diagram $(\S, \bolda, \boldb, \boldd, z)$,
there are isomorphisms
\begin{itemize}
	\item $\pi_{\a,\b} \colon \Pi_{\a,\b} \xrightarrow{\simeq} \Z \oplus H_2(Y_{\a,\b})$ and
	\item $\pi_{\a,\b,\d} \colon \Pi_{\a,\b,\d} \xrightarrow{\simeq} \Z \oplus H_2(X_\triangle)$.
\end{itemize}
In both cases, the projection onto the $\Z$ summand is given by $n_z$.
\end{lem}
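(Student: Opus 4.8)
The plan is to handle both isomorphisms by the same two-step procedure: first split off a canonical $\Z$-summand using $n_z$, and then identify the kernel of $n_z$ with the relevant second homology group. The second step is exactly the content of \cite[Propositions~2.15 and~8.3]{OSz}; the only point to reconcile is that Ozsv\'ath and Szab\'o build the normalization $n_z=0$ into their notion of a (triply) periodic domain, whereas the definitions of doubly and triply periodic domains given above do not, which accounts for the additional $\Z$ factor.

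Concretely, I would first note that $[\S] := D_1 + \cdots + D_l \in D(\cT)$, the domain in which every region occurs with multiplicity one, has $\de[\S]=0$ and hence lies in $\Pi_{\a,\b}$ (and in $\Pi_{\a,\b,\d}$), with $n_z([\S])=1$. Therefore $n_z \colon \Pi_{\a,\b}\to\Z$ is surjective and admits the canonical section $1\mapsto[\S]$, giving a splitting $\Pi_{\a,\b}\cong\Z\langle[\S]\rangle\oplus\ker n_z$ in which the $\Z$-coordinate is $n_z$; the same applies verbatim to $\Pi_{\a,\b,\d}$.

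It then remains to identify $\ker n_z$. An element $\P\in\Pi_{\a,\b}$ with $n_z(\P)=0$ has no jumps across the $\boldd$-curves, so it is an ordinary periodic domain for the Heegaard diagram $(\S,\bolda,\boldb)$, i.e.\ a $2$-chain on $\S$ with $\de\P=\sum_i a_i\a_i+\sum_j b_j\b_j$; capping each $\a_i$ (resp.\ $\b_j$) off inside the handlebody $U_\a$ (resp.\ $U_\b$) with $a_i$ (resp.\ $b_j$) copies of the corresponding compressing disk yields a closed $2$-cycle in $Y_{\a,\b}=U_\a\cup_\S U_\b$, and this assignment is a homomorphism since compressing disks are unique up to homotopy rel boundary. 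To see it is an isomorphism one observes that the existence of $\P$ is equivalent to the relation $\sum_i a_i[\a_i]+\sum_j b_j[\b_j]=0$ in $H_1(\S)$, that such a $\P$ is unique once $n_z=0$ is imposed, and that the classes $[\a_i]$ (and separately the $[\b_j]$) are linearly independent in $H_1(\S)$ and span the kernels of $H_1(\S)\to H_1(U_\a)$ and $H_1(\S)\to H_1(U_\b)$; the Mayer--Vietoris sequence of $Y_{\a,\b}=U_\a\cup_\S U_\b$ then identifies the resulting group with $H_2(Y_{\a,\b})$, and one checks this matches the capping map. For the triple case one replaces $Y_{\a,\b}$ by the $4$-manifold $X_\triangle$ obtained from $\S\times\Delta$ by attaching thickened $\a$-, $\b$-, and $\d$-compressing disks along the three edges of the triangle $\Delta$; a triply periodic domain with $n_z=0$ caps off inside $X_\triangle$, and the Mayer--Vietoris sequence of the decomposition of $X_\triangle$ into $\S\times\Delta$ and the three ``handlebody $\times$ edge'' pieces identifies $\ker n_z$ with $H_2(X_\triangle)$, which is \cite[Proposition~8.3]{OSz}.

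Since these homological computations are carried out in \cite{OSz}, there is no genuine obstacle here; the most delicate bookkeeping occurs in the triple case, where one must fix the combinatorial model of $X_\triangle$ precisely — tracking which handlebody is glued along which edge of $\Delta$ and which $3$-manifold ($Y_{\a,\b}$, $Y_{\b,\d}$, or $Y_{\a,\d}$) appears in its boundary — and verify that the Mayer--Vietoris connecting map records $\de\P$. The only ingredient not already in \cite{OSz} is the elementary splitting off of $\Z\langle[\S]\rangle$, so the proof is essentially a translation of \cite{OSz} into the present conventions.
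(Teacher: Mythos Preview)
The paper does not give its own proof of this lemma; it is simply cited from \cite[Propositions~2.15 and~8.3]{OSz}. Your proposal is correct and is exactly the argument from that reference, together with the observation (which you make explicit) that the paper's definition of (triply) periodic domain omits the normalization $n_z=0$, so the full surface class $[\S]$ provides a canonical section of $n_z$ and splits off the extra $\Z$ summand.

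One small wording issue: in the sentence ``An element $\P\in\Pi_{\a,\b}$ with $n_z(\P)=0$ has no jumps across the $\boldd$-curves,'' the absence of jumps across $\boldd$ is already built into the definition of $\Pi_{\a,\b}$ and has nothing to do with $n_z$; the role of $n_z=0$ is only to recover Ozsv\'ath and Szab\'o's normalization. This does not affect the argument.
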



\begin{lem}
\label{lem:triplyperiodic2}
Given a pointed triple Heegaard diagram $(\S, \bolda, \boldb, \boldd, z)$,
the isomorphisms from Lemma~\ref{lem:triplyperiodic1} fit into the commutative diagram
\[
\xymatrix{
\Pi_{\a,\b} \ar[r]^-{\pi_{\a,\b}} \ar[d] & \Z \oplus H_2(Y_{\a,\b}) \ar[d]^{\Id_\Z \oplus i_*} \\
\Pi_{\a,\b,\d} \ar[r]^-{\pi_{\a,\b,\d}} & \Z \oplus H_2(X_\triangle),
}
\]
where $i \colon Y_{\a,\b} \to X_\triangle$ is the embedding.
\end{lem}

\begin{proof}
Let $\P$ be a doubly periodic domain in $\Pi_{\a,\b}$. By construction, the 2-chain in~$X_\Delta$
associated to $\P$ -- thought of as a triply periodic domain -- is
homotopic, hence homologous to $i_*(H(\P))$, where $H(\P)$ is the 2-chain in $Y_{\a,\b}$
obtained by capping off the boundary of the doubly periodic domain $\P$.
Therefore, the projections onto the second summand commute.
The projections onto the $\Z$ summands commute because in both cases they are obtained by
taking $n_z$.
\end{proof}

\begin{proof}[{Proof of Proposition~\ref{prop:triplyperiodic}}]
By Lemmas~\ref{lem:triplyperiodic1} and~\ref{lem:triplyperiodic2}, it is sufficient
to prove that the map
\[
\chi: H_2(Y_{\a,\b}) \oplus H_2(Y_{\a,\d}) \oplus H_2(Y_{\b,\d}) \longrightarrow H_2(X_\triangle)
\]
is surjective.

From the long exact sequence associated to the pair
$(X_\triangle, Y_{\a,\b} \sqcup Y_{\a,\d} \sqcup Y_{\b,\d})$, we see that the map $\chi$ is
surjective if and only if
\[
\varphi: H_2(X_\triangle, Y_{\a,\b} \sqcup Y_{\a,\d} \sqcup Y_{\b,\d}) \longrightarrow
H_1(Y_{\a,\b} \sqcup Y_{\a,\d} \sqcup Y_{\b,\d})
\]
is injective.
From the inclusion of pairs
\[
i_{\a,\b,\d} \colon (X_\triangle, Y_{\a,\b} \sqcup Y_{\a,\d} \sqcup Y_{\b,\d})
\hookrightarrow (X, X_1 \sqcup X_3 \sqcup \hat X),
\]
we obtain the commutativity of the following diagram:

\[
\xymatrix{
H_2(X_\triangle, Y_{\a,\b} \sqcup Y_{\a,\d} \sqcup Y_{\b,\d}) \ar[r]^-{\varphi} \ar[d]_-{(i_{\a,\b,\d})_*}^-{\vsimeq}
&H_1(Y_{\a,\b})  \oplus  H_1(Y_{\a,\d}) \oplus H_1(Y_{\b,\d}) \ar[d]^{(i_{\a,\b})_* \oplus (i_{\a,\d})_* \oplus (i_{\b,\d})_*} \\
H_2(X,X_1 \sqcup X_3 \sqcup \widehat{X}) \ar[r]^-{\simeq}_-{\tilde\varphi} & H_1(X_1) \oplus H_1(X_3) \oplus H_1(\widehat{X}),
}
\]
where $i_{\a,\b}$, $i_{\a,\d}$, and $i_{\b,\d}$ are the restrictions of $i_{\a,\b,\d}$
to $Y_{\a,\b}$, $Y_{\a,\d}$, and $Y_{\b,\d}$, respectively.
The map $(i_{\a,\b,\d})_*$ is an isomorphism by excision. The fact that
$\tilde\varphi$ is an isomorphism follows from the long exact sequence in
homology associated with the pair $(X, X_1 \sqcup X_3 \sqcup \hat X)$,
together with the fact that $H_2(X) = H_1(X) = 0$.

The commutativity of the above diagram implies that the map $\varphi$ is
injective, and therefore concludes the proof of the proposition.
\end{proof}

\begin{rem}
\label{rem:aftertp}
The important condition in Proposition~\ref{prop:triplyperiodic} is that the map
\[
\rho \colon H_2(X_1) \oplus H_2(X_3) \oplus H_2(\hat X) \To H_2(X)
\]
is surjective, which is obviously true as $H_2(X)=0$.
The surjectivity of $\rho$ is equivalent to the injectivity of $\tilde\varphi$,
which implies the injectivity of $\varphi$.
\end{rem}

In Proposition~\ref{prop:triplyperiodic}, we saw that, in the case of
a triple diagram describing the 2-handle attachments in the
cobordism $X$, every triply periodic domain can be expressed
as a sum of doubly periodic domains.
We now analyze the doubly periodic domains.

\begin{prop}
\label{prop:doublyperiodic}
Consider a null-homologous knot~$K$ in a 3-manifold~$Y$.
If $(\S, \bolda, \boldb,w,z)$ is a doubly pointed Heegaard diagram
for $(Y,K)$, then for every periodic domain $\P$,
\[
n_z(\P) - n_w(\P) = 0.
\]
\end{prop}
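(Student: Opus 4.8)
The plan is to use the characterization of the Alexander grading via relative $\spinc$ structures, together with Lemma~\ref{lem:Alexanderdomain}, which already packages the key computation. First I would recall that for a doubly pointed Heegaard diagram $(\S,\bolda,\boldb,w,z)$ of a null-homologous knot $(Y,K)$, a periodic domain $\P$ represents a class in $H_2(Y)$ (after capping off its boundary, which is a $\Z$-linear combination of $\bolda$- and $\boldb$-curves). More precisely, $\P$ with its boundary capped is a $2$-cycle in $Y$, and the key point is that $n_z(\P)-n_w(\P)$ computes the algebraic intersection number of this $2$-cycle with the knot $K$ sitting in $Y$: the knot $K$ meets $\S$ exactly at (small pushoffs of) the basepoints $w$ and $z$, with opposite signs, since $K$ passes once "up" near $z$ and once "down" near $w$ through the Heegaard surface in the standard picture of a doubly pointed diagram.

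The main step is therefore to show that this intersection number vanishes. Since $K$ is null-homologous in $Y$, it bounds a Seifert surface $S$, so $[K] = 0 \in H_1(Y)$, and hence the algebraic intersection of $K$ with any closed $2$-cycle is zero; equivalently, $K$ has zero linking with (is homologous to zero against) every class in $H_2(Y)$. Concretely, the evaluation of the Poincaré dual of $[K]$ on $[\P] \in H_2(Y)$ equals $n_z(\P) - n_w(\P)$, and this evaluation is zero because $\PD[K] = 0$ in $H^1(Y)$. Alternatively, and perhaps more cleanly within the framework of this paper, I would invoke Lemma~\ref{lem:Alexanderdomain}: that lemma says that for $\phi \in \pi_2(\x,\y)$ one has $n_z(\phi) - n_w(\phi) = \A_S(\x) - \A_S(\y)$. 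A periodic domain $\P$ can be regarded as an element of $\pi_2(\x,\x)$ for any generator $\x$ (it has trivial $\bolda\cap\boldb$ "corners"), so applying the lemma with $\y = \x$ gives $n_z(\P) - n_w(\P) = \A_S(\x) - \A_S(\x) = 0$ directly.

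The only subtlety — and the step I expect to require the most care — is making precise the claim that a periodic domain $\P$ can legitimately be viewed as a class in $\pi_2(\x,\x)$ so that Lemma~\ref{lem:Alexanderdomain} applies, rather than merely as a formal sum of regions. This is standard (the set of periodic domains is exactly the image of $\pi_2(\x,\x)$ under the domain map, up to the usual identifications), but I would state it explicitly: given a periodic domain $\P$, its boundary is a sum of full $\bolda$- and $\boldb$-curves, so it defines a homotopy class of Whitney disks (or rather triangles/annuli) from $\x$ to $\x$, and $\A_S$ is well-defined on such classes by Definition~\ref{def:Alexandergrading} together with the fact that $\underline{\s}(\x)$ depends only on $\x$. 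Everything else is then immediate. I would also remark that this recovers the familiar fact that the knot filtration is a genuine $\Z$-filtration (the Alexander grading is well-defined on generators) precisely because of this vanishing.
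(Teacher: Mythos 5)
Your main argument is precisely the paper's proof: cap off the boundary of $\P$ with the cores of the $2$-handles to obtain a $2$-cycle $H(\P)$ in $Y$, observe that $n_z(\P)-n_w(\P)$ is its algebraic intersection number with $K$, and conclude it vanishes because $K$ is null-homologous. Your alternative route, viewing $\P$ as a class in $\pi_2(\x,\x)$ and applying Lemma~\ref{lem:Alexanderdomain} with $\y=\x$, is also sound (and not circular here, since that lemma is quoted from Ozsv\'ath--Szab\'o and Rasmussen rather than derived from this proposition), but it is not the argument the paper uses.
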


\begin{proof}
Let $H(\P) \in C_2(Y)$ be the 2-cycle obtained by capping off the
boundary of~$\P$ with the cores of the 3-dimensional 2-handles attached
to $\S \times I$ along $\bolda \times \{0\}$ and $\boldb \times \{1\}$.
Then $n_z(\P) - n_w(\P)$ is precisely the algebraic intersection number
of $H(\P)$ and $K$, which is zero as $K$ is null-homologous.
\end{proof}

\subsection{Representing homology classes}

Let $(\Sigma, \bolda, \boldb)$ be a Heegaard diagram for a 3-manifold
$Y$. It is straightforward to see that any element of $H_1(Y)$ can
be represented by a 1-cycle in $\Sigma$. In this subsection, we strengthen
this result for the case of concordances in the following sense.

\begin{lem}
\label{lem:H1Sigma}
Choose an arbitrary handle decomposition of the cobordism~$X$
from $S^3$ to $S^3$, and let $X_2$ denote the trace of the 2-handle attachments.
Suppose that $(\Sigma,\bolda,\boldb,\boldd,w,z)$ is a doubly pointed triple Heegaard
diagram subordinate to a bouquet for a link $\L$ that defines $X_2$.
Then the map
\[
i \colon H_1(\Sigma) \longrightarrow H_1(Y_{\a,\b}) \oplus H_1(Y_{\a,\d}),
\]
induced by the inclusions $\Sigma \hookrightarrow Y_{\a,\b}$ and
$\Sigma \hookrightarrow Y_{\a, \d}$, is surjective.
\end{lem}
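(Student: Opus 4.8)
The plan is to relate the cokernel of $i$ to the homology of the cobordism $X$, and then use the hypothesis $H_1(X) = H_2(X) = 0$ (equivalently, that $X$ has the homology of $S^3 \times I$). First I would recall the standard fact for a single Heegaard diagram: the inclusion $\Sigma \hookrightarrow Y_{\a,\b}$ induces a surjection $H_1(\Sigma) \to H_1(Y_{\a,\b})$, with kernel spanned by the classes of the $\alpha$- and $\beta$-curves. The point of the lemma is the \emph{joint} surjectivity onto $H_1(Y_{\a,\b}) \oplus H_1(Y_{\a,\d})$, which can fail for a general triple diagram; here it holds because of the constraints coming from $X$.

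The key step is to set up a Mayer--Vietoris or long-exact-sequence argument, in the same spirit as the proof of Proposition~\ref{prop:triplyperiodic}. I would consider the handlebodies and cobordism pieces built from the triple diagram. Write $U_\a$, $U_\b$, $U_\d$ for the compression bodies (genus-$d$ handlebodies) attached along $\bolda$, $\boldb$, $\boldd$ respectively, so that $Y_{\a,\b} = U_\a \cup_\Sigma U_\b$, $Y_{\a,\d} = U_\a \cup_\Sigma U_\d$, and $X_\triangle$ is glued from $\Sigma \times \triangle$ and these handlebodies, with $X_2$ obtained from $X_\triangle$ by filling $Y_{\b,\d}$ with $\hat X = \natural_{i=1}^\ell (S^1 \times D^3)$. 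The cobordism $X$ is then $X_1 \cup X_2 \cup X_3$. The plan is to chase the diagram
\[
\xymatrix{
H_1(\Sigma) \ar[r] \ar[d] & H_1(U_\b) \oplus H_1(U_\d) \ar[d] \\
H_1(Y_{\a,\b}) \oplus H_1(Y_{\a,\d}) \ar[r] & \cdots
}
\]
and ultimately to show that an element in the cokernel of $i$ would produce a nonzero class detected in $H_1(X_2)$ or $H_2(X, \partial X)$, contradicting $H_1(X) = H_2(X) = 0$ together with the contractibility of the $X_1$ and $X_3$ pieces onto the knot complements. Concretely: $H_1(\Sigma) \to H_1(U_\b) \oplus H_1(U_\d)$ is surjective (each compression body retracts to a wedge of circles hit by $\Sigma$), and the cokernel of $i$ injects into $H_1(Y_{\a,\b} \sqcup Y_{\a,\d}) / (\text{image of } H_1(U_\b) \oplus H_1(U_\d))$; this last group maps into $H_1(X_2)$, which vanishes because $H_1(X) = 0$ and the complementary pieces $X_1$, $X_3$ contribute no extra $H_1$ by the argument in Lemma~\ref{lem:easyspinc} (where it was shown $H^k(W, M_i) = 0$, equivalently the relevant relative homology vanishes).

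The main obstacle I anticipate is bookkeeping: making the several handlebodies, the triangle cobordism $X_\triangle$, and the $\hat X$-filling interact correctly in the exact sequences, and in particular correctly identifying which classes in $H_1(Y_{\b,\d}) = H_1(\#^\ell(S^1 \times S^2))$ get killed when $\hat X$ is glued on, so that the surjectivity onto the $(\a,\b)$ and $(\a,\d)$ factors is not spoiled. I expect the cleanest route is to phrase everything via the pair $(X_2, Y_{\a,\b} \sqcup Y_{\a,\d})$ and use that $H_2(X) = H_1(X) = 0$ forces the connecting map to be an isomorphism, exactly mirroring the use of $\tilde\varphi$ in Proposition~\ref{prop:triplyperiodic}; the curve-by-curve verification that $\Sigma$ surjects onto each handlebody's $H_1$ is routine and I would not belabor it.
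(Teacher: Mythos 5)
Your guiding idea---that the cokernel of $i$ should be controlled by the vanishing of $H_1(X)$ and $H_2(X)$---is the right one, but the concrete chain of steps breaks exactly at the point you declare routine. The map $H_1(\Sigma) \to H_1(U_\b) \oplus H_1(U_\d)$ is \emph{not} surjective in general: it is $h \mapsto (h + \spn{\boldb},\, h + \spn{\boldd})$, whose cokernel is $H_1(\Sigma)/(\spn{\boldb} + \spn{\boldd}) \cong H_1(Y_{\b,\d}) \cong \Z^{d-\ell}$, which is nonzero whenever $\ell < d$ (recall that $\d_i$ is a small isotopic translate of $\b_i$ for $i > \ell$). Surjectivity onto each factor separately does not give joint surjectivity onto the direct sum---this is precisely the subtlety of the lemma itself, so this step begs the question. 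The next step is also misdirected: since $i$ factors through $H_1(U_\b) \oplus H_1(U_\d)$, the image of $i$ is \emph{contained in} the image of $H_1(U_\b) \oplus H_1(U_\d)$ in $H_1(Y_{\a,\b}) \oplus H_1(Y_{\a,\d})$, so the cokernel of $i$ only \emph{surjects onto} the quotient by that image, rather than injecting into it; and saying that this quotient ``maps into'' $H_1(X_2) = 0$ gives no information without injectivity. As written, the argument therefore establishes nothing about the cokernel of $i$.

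The paper's proof bypasses the handlebodies entirely. It uses the algebraic short exact sequence
\[
0 \to \frac{H_1(\Sigma)}{\spn{\bolda,\boldb} \cap \spn{\bolda,\boldd}}
\to \frac{H_1(\Sigma)}{\spn{\bolda,\boldb}} \oplus \frac{H_1(\Sigma)}{\spn{\bolda,\boldd}}
\to \frac{H_1(\Sigma)}{\spn{\bolda,\boldb,\boldd}} \to 0,
\]
whose middle term is $H_1(Y_{\a,\b}) \oplus H_1(Y_{\a,\d})$ and whose third term is $H_1(X_\triangle)$ by \cite[Proposition~8.2]{OSz}; since the image of $i$ is the image of the first map, the cokernel of $i$ is identified with $H_1(X_\triangle)$. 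It then proves $H_1(X_\triangle) = 0$ by the Mayer--Vietoris sequence for the same decomposition $X = X_\triangle \cup (X_1 \sqcup X_3 \sqcup \hat X)$ used in Proposition~\ref{prop:triplyperiodic}: because $H_1(X) = H_2(X) = 0$, one gets an isomorphism from $H_1(Y_{\a,\b}) \oplus H_1(Y_{\a,\d}) \oplus H_1(Y_{\b,\d})$ onto $H_1(X_\triangle) \oplus H_1(X_1) \oplus H_1(X_3) \oplus H_1(\hat X)$, and a rank count ($\Z^{c_1}$, $\Z^{c_3}$, $\Z^{d-c_2}$ on both sides) forces $H_1(X_\triangle) = 0$. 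Note that the factor $H_1(Y_{\b,\d}) \cong \Z^{d-\ell}$, which your plan would need to suppress, is exactly what is matched against $H_1(\hat X)$ in this count. If you prefer your framing via $X_2$, it can be made to work---gluing $\hat X$ along $Y_{\b,\d}$ induces an isomorphism $H_1(X_2) \cong H_1(X_\triangle)$---but you still need the identification of the cokernel of $i$ with this group and an actual vanishing argument, not merely a map to a vanishing group.
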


In other words, given any two classes in the first
homologies of $Y_{\a, \b}$ and $Y_{\a,\d}$, there is a
1-cycle in $\Sigma$ that represents both simultaneously.

\begin{proof}
Consider the following short exact sequence of abelian groups:
\[
0 \rightarrow \frac{H_1(\Sigma)}{\spn{\bolda, \boldb} \cap \spn{\bolda,\boldd}}
\rightarrow \frac{H_1(\Sigma)}{\spn{\bolda, \boldb}} \oplus \frac {H_1(\Sigma)}{\spn{\bolda, \boldd}}
\rightarrow \frac{H_1(\Sigma)}{\spn{\bolda, \boldb, \boldd}}
\rightarrow 0.
\]
The middle term is isomorphic to $H_1(Y_{\bolda, \boldb}) \oplus H_1(Y_{\bolda,\boldd})$,
and the last term is isomorphic to $H_1(X_\triangle)$, where $X_\triangle$ is the
4-manifold obtained by the triangle construction, cf.~\cite[Proposition~8.2]{OSz}.
The short exact sequence above can then be rewritten as
\[
0 \rightarrow \frac{H_1(\Sigma)}{\spn{\bolda, \boldb} \cap \spn{\bolda, \boldd}}
\xrightarrow{f} H_1(Y_{\bolda, \boldb}) \oplus H_1(Y_{\bolda,\boldd})
\xrightarrow{g} H_1(X_\triangle)
\rightarrow 0.
\]

If we prove that $H_1(X_\triangle) = 0$, then by exactness we have that the map $f$ is surjective.
So the map $i$ in the statement of the lemma is surjective too,
because it is obtained by composing the following two maps:
\[
H_1(\Sigma)
\rightarrow \frac{H_1(\Sigma)}{\spn{\bolda, \boldb} \cap \spn{\bolda, \boldd}}
\xrightarrow{f} H_1(Y_{\a, \b}) \oplus H_1(Y_{\a,\d}).
\]
Therefore, we only need to prove that $H_1(X_\triangle) = 0$.
For this purpose, consider the Mayer-Vietoris long exact sequence
associated to the decomposition $X = A \cup B$, where
$A = X_\triangle$ and $B = X_1 \sqcup X_3 \sqcup \hat X$.
A portion of the long exact sequence is
\[
H_2(X)
\rightarrow H_1(A \cap B)
\xrightarrow{\iota} H_1(A) \oplus H_1(B)
\rightarrow H_1(X).
\]
Since $X$ has trivial first and second homology groups,
by exactness the map $\iota$ gives an isomorphism
\begin{equation}
\label{eq:hisomorphism}
H_1(Y_{\a,\b}) \oplus H_1(Y_{\a,\d}) \oplus H_1(Y_{\b,\d})
\xrightarrow{\sim} H_1(X_\triangle) \oplus H_1(X_1) \oplus H_1(X_3) \oplus H_1(\hat X).
\end{equation}

If $c_k$ denotes the number of $k$-handles in the decomposition
of the cobordism $X$ and $d = |\bolda|$, then it is straightforward to check that
\[
\begin{split}
H_1(Y_{\a, \b}) &\cong H_1(X_1) \cong \Z^{c_1}, \\
H_1(Y_{\b, \d}) &\cong H_1(\hat X) \cong \Z^{d-c_2}, \text{ and} \\
H_1(Y_{\a, \d}) &\cong H_1(X_3) \cong \Z^{c_3}.
\end{split}
\]
It now follows from Equation~\eqref{eq:hisomorphism} that $H_1(X_\triangle) = 0$,
which concludes the proof.
\end{proof}

\subsection{Proof of Lemma~\ref{lem:2handles1}}



The cobordism $\W_2$ can be represented via surgery on a framed $\ell$-component link $\L$.
Let $\cT = (\S, \bolda, \boldb, \boldd, w,z)$ be a doubly pointed triple
Heegaard diagram subordinate to a bouquet for the framed link $\L$.
As in \cite[Section~6]{cob}, we suppose that
$d= |\bolda| = |\boldb| = |\boldd|$, and that the curve~$\delta_i$
is an isotopic translates of~$\beta_i$ for $i \in \{\ell+1,\ldots,d\}$.

Following the notation established in Section~\ref{sec:notation} and in
Figure~\ref{fig:notation}, let $Y_{\a,\b}$, $Y_{\a,\d}$, and $Y_{\b,\d}$
denote the closed manifolds associated to
the Heegaard diagrams $(\S, \bolda, \boldb)$, $(\S, \bolda,\boldd)$,
and $(\S, \boldb, \boldd)$, respectively. Each of these closed manifolds contains a knot,
defined by the basepoints~$w$ and~$z$. We denote the knot exteriors -- thought of
as sutured manifolds -- by $M_{\a,\b}$, $M_{\a,\d}$, and
$M_{\b,\d}$. We let~$\g$ denote the sutures of all three sutured
manifolds.

Let $\s$ be the unique $\spinc$ structure on $X$.
By definition, $\s|_{X_\triangle}$ is the unique $\spinc$ structure on
$X_\triangle$ that extends to the whole cobordism $X$.
Suppose that $\x \in \T_\a \cap \T_\b$ and $\y \in \T_\a \cap \T_\d$
are such that $\s(\x) = \s|_{Y_{\a,\b}}$ and $\s(\y) = \s|_{Y_{\a,\d}}$.
Let $\theta \in \T_\b \cap \T_\d$ denote the top-graded generator.
Consider a Whitney triangle $\psi \in \pi_2(\x,\theta,\y)$,
possibly crossing the basepoints~$z$ and~$w$, and let
\begin{equation}
\label{eq:c}
c = \A_{S_2}(\y) - \A_{S_1}(\x) + n_z(\psi) - n_w(\psi).
\end{equation}

Our aim is to show that $c = 0$.
First, we show that $c$ is independent of the triangle~$\psi \in \pi_2(\x,\theta,\y)$
for fixed~$\x$ and~$\y$. Indeed, let $\psi_1$, $\psi_2 \in \pi_2(\x,\theta,\y)$. The domain
$\P = \D(\psi_1) - \D(\psi_2)$ is triply periodic.
By Proposition~\ref{prop:triplyperiodic}, $\P$ can be expressed as the sum of three doubly
periodic domains $\P_{\a,\b}$, $\P_{\b,\d}$, and $\P_{\a,\d}$.



Since $(\S,\bolda,\boldb,\boldd)$ is subordinate to a bouquet for~$\L$,
each diagram $(\S,\bolda,\boldb,w,z)$, $(\S,\boldb,\boldd,w,z)$, and $(\S,\bolda,\boldd,w,z)$
defines a null-homologous knot in a connected sum of a number of copies of $S^1 \times S^2$.
Hence, by Proposition~\ref{prop:doublyperiodic},
$n_z(P) = n_w(P)$ for every $P \in \{\P_{\a,\b},\P_{\b,\d},\P_{\a,\d}\}$.
So $n_z(\P) = n_w(\P)$, and
\[
n_z(\psi_1) - n_w(\psi_1) = n_z(\psi_2) - n_w(\psi_2).
\]
Therefore, $c$ is independent of the triangle $\psi$ for fixed~$\x$ and~$\y$, cf.~Equation~\eqref{eq:c}.

To check that $c$ is independent of $\x$, consider another generator $\x'$
such that $\s(\x') = \s|_{Y_{\a,\b}} = \s(\x)$.
Since $\x$ and $\x'$ represent the same $\spinc$ structure,
there is a Whitney disk $\phi \in \pi_2(\x',\x)$ (that possibly
crosses the basepoints $w$ and $z$).
If $\psi \in \pi_2(\x,\theta,\y)$, then $\phi \# \psi \in \pi_2(\x',\theta,\y)$.
By Lemma~\ref{lem:Alexanderdomain}, the
number $c$ defined in Equation~\eqref{eq:c} is the same for $\psi$ and $\phi\#\psi$,
so $c$ does not depend on $\x$. A similar reasoning also proves that $c$ is
independent of $\y$.

What remains to prove is that $c = 0$. We do
this by constructing a Whitney triangle $\psi$ for which $c = 0$.

\begin{figure}[t]
\begin{center}
\includegraphics{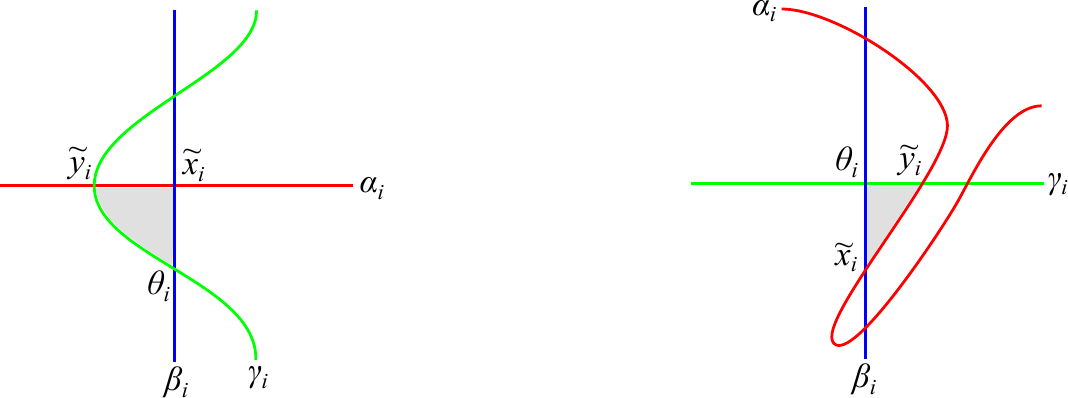}
\caption{This show the domain of the Whitney triangle $\td\psi$.
For $i \in \{\ell+1, \ldots, d\}$, the curves $\beta_i$ and $\delta_i$ are
small isotopic translates of each other, and -- after isotoping $\a_i$ -- we can find a ``small''
triangle bounded by $\a_i$, $\b_i$, and $\d_i$, shown shaded on the left.
For $i \in \{1, \ldots, \ell\}$, after applying finger moves to the $\a$-curves,
we can assume that there is a triangle, shown shaded on the right.
The sum of all these triangles is the domain of~$\td\psi$.}
\label{fig:tdpsi}
\end{center}
\end{figure}

By isotoping the $\a$-curves, we can create intersection points $\td\x \in \T_\a \cap \T_\b$
and $\td\y \in \T_\a \cap \T_\d$ such that there is a ``small'' triangle
$\td\psi \in \pi_2(\td\x,\theta,\td\y)$.
The domain of~$\td\psi$ is shown in Figure~\ref{fig:tdpsi}.
For each $i \in \{\ell+1,\ldots,d\}$, we isotope $\a_i$ -- pushing the other
$\a$-curves alongside -- until it intersects both $\delta_i$ and $\beta_i$ near $\theta_i$,
and consider the shaded triangle shown on the left-hand side of the figure.
For each $i \in \{1,\ldots,\ell\}$, after
some finger moves on the $\a$-curves -- again, pushing the other $\a$-curves
along -- we can assume that there is a
small triangle near each intersection point $\theta_i = \beta_i \cap \delta_i$,
as shown shaded on the right-hand side of the figure.
The sum of all these small triangles is the domain of the Whitney triangle~$\td\psi$.
We denote by $\td\x\in\T_{\a,\b}$ and $\td\y\in\T_{\a,\d}$
the generators connected by $\td\psi$; i.e., $\td\psi\in\pi_2(\td\x,\theta,\td\y)$.

The Whitney triangle $\td\psi$ satisfies $n_z(\td\psi) = n_w(\td\psi) = 0$,
but the constant $c$ is not necessarily defined for it, because $\s(\td\x)$
and $\s(\td\y)$ might not coincide with $\s|_{Y_{\a,\b}}$ and $\s|_{Y_{\a,\d}}$,
respectively, where~$\s \in \spinc(X)$ is the unique $\spinc$ structure,
cf.~Equation~\eqref{eq:c}.
The next lemma proves that we can replace $\td\psi$ with a Whitney triangle
$\psi$ for which the constant $c$ is defined.

\begin{lem}
\label{lem:replacepsi}
We can further isotope the $\a$-curves such that
there is a Whitney triangle $\psi \in \pi_2(\x, \theta, \y)$ satisfying
\begin{itemize}
\item{$n_z(\psi) = n_w(\psi) = 0$ and}
\item{$\s(\x) = \s|_{Y_{\a,\b}}$ and $\s(\y) = \s|_{Y_{\a,\d}}$.}
\end{itemize}
\end{lem}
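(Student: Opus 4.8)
The plan is to keep the ``small'' triangle $\td\psi$ but to further isotope the $\bolda$-curves so that the $\spinc$ structures of its two outer vertices are corrected to the required ones, without disturbing the vanishing of $n_w$ and $n_z$. First I would record the discrepancy. The top generator $\theta$ lies in the torsion $\spinc$ structure of $Y_{\b,\d}\cong\#_{i=1}^{\ell}(S^1\times S^2)$, which is also $\s|_{Y_{\b,\d}}$, so only the vertices $\td\x$ and $\td\y$ need adjusting; set $h_1:=\s|_{Y_{\a,\b}}-\s(\td\x)\in H^2(Y_{\a,\b})$ and $h_3:=\s|_{Y_{\a,\d}}-\s(\td\y)\in H^2(Y_{\a,\d})$.

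Next I would produce a curve along which to isotope. By Lemma~\ref{lem:H1Sigma} the map $H_1(\Sigma)\to H_1(Y_{\a,\b})\oplus H_1(Y_{\a,\d})$ is surjective, and since $H_1(\Sigma\setminus\{w,z\})\to H_1(\Sigma)$ is onto as well, I can choose an embedded multicurve $\zeta\subset\Sigma\setminus\{w,z\}$, transverse to $\bolda\cup\boldb\cup\boldd$, whose class maps to $(\PD\, h_1,\PD\, h_3)$.

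Then I would isotope the $\bolda$-curves by finger moves running along $\zeta$, pushing the other $\bolda$-curves aside where $\zeta$ meets them so that the collection stays embedded and pairwise disjoint, and keeping the fingers in a neighbourhood of $\zeta$ and hence away from $w$ and $z$. This produces an admissible doubly pointed triple diagram $(\Sigma,\bolda',\boldb,\boldd,w,z)$ subordinate to the same bouquet for $\L$ and inducing the same $2$-handle map under the canonical filtered identifications. The domain of $\psi$ is obtained from that of $\td\psi$ by appending thin strips that run along $\zeta$ and join the new vertices to the small-triangle part; it is still a positive domain with all local multiplicities in $\{0,1\}$, it lies in $\pi_2(\x,\theta,\y)$ for generators $\x\in\T_{\a'}\cap\T_\b$ and $\y\in\T_{\a'}\cap\T_\d$, and $n_w(\psi)=n_z(\psi)=0$ because $\zeta$ avoids $w$ and $z$. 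Since $\x$ and $\td\x$ agree away from the finger, their difference class $\epsilon(\x,\td\x)\in H_1(Y_{\a,\b})$ is represented by an arc of the finger closed up along a $\b$-curve, hence equals the image of $[\zeta]$; the relation $\s(\x)-\s(\td\x)=\PD[\epsilon(\x,\td\x)]$ then gives $\s(\x)=\s(\td\x)+h_1=\s|_{Y_{\a,\b}}$, and the identical argument on the $\boldd$-side gives $\s(\y)=\s|_{Y_{\a,\d}}$.

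The hard part will be the finger-move bookkeeping in the third step: arranging one coherent family of isotopies of the $\bolda$-curves that realizes the class $[\zeta]$ simultaneously in $H_1(Y_{\a,\b})$ and in $H_1(Y_{\a,\d})$ — that is, producing usable new intersection points with the correct $\b$- and $\d$-curves while respecting the matching permutations defining the generators — and checking that this can be arranged so as to preserve positivity and smallness of the triangle, the vanishing of $n_w$ and $n_z$, admissibility, and the subordinate-to-a-bouquet structure. Once such a diagram is in hand, the $\spinc$ computation is a routine application of the difference-class formula.
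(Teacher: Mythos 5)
Your proposal is correct and follows essentially the same route as the paper: record the discrepancy classes for $\td\x$ and $\td\y$, realize them simultaneously by a class in $H_1(\Sigma)$ represented by a curve avoiding $w$ and $z$ via Lemma~\ref{lem:H1Sigma}, finger-move the $\a$-curves along it, and conclude with the formula $\s(\x'')-\s(\x')=\PD(\e(\x',\x''))$. The bookkeeping you flag as the hard part is handled in the paper simply by choosing the representing curve $\lambda$ (taken with multiplicity $m$) to meet only the first small triangle $\td T_1$, to be disjoint from $\td T_2,\ldots,\td T_d$ and from the basepoints, so that only the first components of $\td\x$ and $\td\y$ change and the new Whitney triangle has domain $T_1\sqcup\td T_2\sqcup\cdots\sqcup\td T_d$ with $n_w=n_z=0$; no positivity of the domain is actually needed, since only a topological Whitney triangle class is required.
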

\begin{proof}
Given generators $\x' = (x'_1, \ldots, x'_d)$ and
$\x'' = (x''_1, \ldots, x''_d)$ in $\T_\a \cap \T_\b$,
Ozsv\'ath and Szab\'o associate to them \cite[Definition 2.11]{OSz}
a class $\e(\x',\x'') \in H_1(Y_{\a,\b})$.
Choose 1-chains $a \subset \bolda$ and $b \subset \boldb$
such that
\[\de a = \de b = x''_1 + \cdots + x''_d - x'_1 - \cdots - x'_d.\]
Then $a-b$ represents an element of $H_1(\Sigma)$ whose image
in $H_1(Y_{\a,\b})$ under the inclusion map is $\e(\x',\x'')$.
Ozsv\'ath and Szab\'o proved \cite[Lemma 2.19]{OSz} that
\begin{equation}
\label{eq:eps}
\s(\x'') - \s(\x') = \PD(\e(\x',\x'')).
\end{equation}

Consider the Whitney triangle $\td\psi \in \pi_2(\td\x,\theta,\td\y)$
defined above, and whose domain is shown in Figure~\ref{fig:tdpsi}.
Its domain is the disjoint union of $d$ triangles $\td T_1, \dots, \td T_d$.

We define the homology classes $h_1 \in H_1(Y_{\a,\b})$ and $h_2 \in H_1(Y_{\a,\d})$ as
\begin{subequations}
\label{eq:defh}
\begin{align}
h_1 &= \PD \left(\s|_{Y_{\a,\b}} - \s(\td\x)\right) \text{ and}\\
h_2 &= \PD \left(\s|_{Y_{\a,\d}} - \s(\td\y)\right),
\end{align}
\end{subequations}
where $\s$ is the unique $\spinc$ structure on $X$.
By Lemma~\ref{lem:H1Sigma}, there is a homology class $h \in H_1(\Sigma)$
such that $i(h) = (h_1, h_2)$; i.e., $h$ represents $h_1$ in $H_1(Y_{\a,\b})$
and $h_2$ in $H_1(Y_{\a,\d})$.
We can represent $h$ as~$m\lambda$, where $\lambda$ is a simple closed curve
on $\Sigma$ that satisfies the following conditions:
\begin{itemize}
\item{$\lambda$ intersects the triangle $\td T_1$ as on the left-hand side of Figure~\ref{fig:psi},}
\item{$\lambda$ is disjoint from all the triangles $\td T_2, \ldots, \td T_d$, and}
\item{$\lambda$ is disjoint from the basepoints $z$ and $w$.}
\end{itemize}
\begin{figure}[t]
\begin{center}
\includegraphics{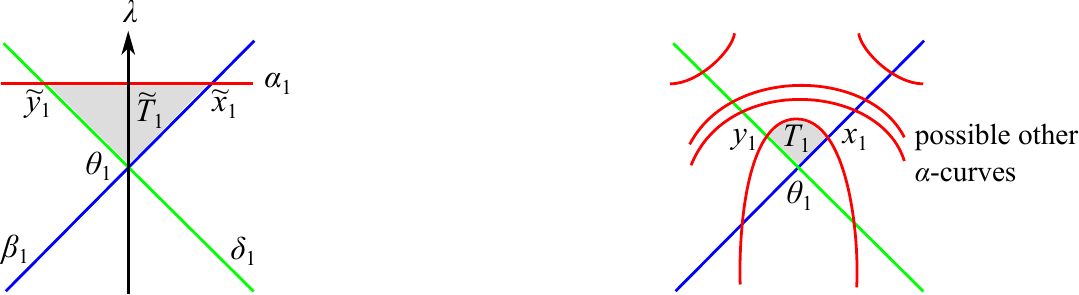}
\caption{The picture above shows how to modify the Whitney triangle
$\td\psi$ defined in Figure~\ref{fig:tdpsi} to obtain a Whitney triangle
$\psi$ satisfying the requirements of Lemma~\ref{lem:replacepsi}.
The picture on the left shows the loop $\lambda$ near the triangle $\td T_1$.
The picture on the right shows the new triangle $T_1$ in the triple Heegaard
diagram obtained after performing a finger move on the $\a$-curves
along $\lambda$.}
\label{fig:psi}
\end{center}
\end{figure}
If we perform a finger move on the $\a$-curves along the loop~$m\lambda$,
the result will look like the right-hand side of Figure~\ref{fig:psi}.
If $x_1$ and $y_1$ are as on the right-hand side of Figure~\ref{fig:psi},
we define $\x=(x_1,\td x_2, \ldots, \td x_d)$ and $\y=(y_1, \td y_2, \ldots, \td y_d)$.
Notice that, by construction,
\begin{equation}
\label{eq:epsh}
\e(\td\x, \x) = h_1 \text{ and } \e(\td\y, \y) = h_2.
\end{equation}
Let $\psi$ be a Whitney triangle with domain
$T_1 \sqcup \td T_2 \sqcup \cdots \sqcup \td T_d$, where $T_1$ is the
shaded triangle on the right-hand side of Figure~\ref{fig:psi}. By construction,
we have $n_z(\psi) = n_w(\psi) = 0$. Furthermore, due to Equations~\eqref{eq:eps}, \eqref{eq:epsh},
and~\eqref{eq:defh}, we have
\begin{align*}
\s(\x) &= \s(\td\x) + \PD(\e(\td\x,\x))\\
&= \s(\td\x) + \PD(h_1)\\
&= \s(\td\x) + (\s|_{Y_\ab} - \s(\td\x)) = \s|_{Y_{\a,\b}}.
\end{align*}
Analogously, we have $\s(\y) = \s|_{Y_{\a,\d}}$.
\end{proof}

Before showing that $c = 0$ for the triangle $\psi \in \pi_2(\x,\theta,\y)$ constructed above,
we prove that the relative $\spinc$ structure $\rs(\psi) \in \spinc(\W_2)$ extends
to a relative $\spinc$ structure on $\W$.

Recall that $Y_1 = Y_{\a,\b}$ is obtained from $Y_0$ by performing surgery along some
framed 0-spheres. The belt circles of the 1-handles involved give rise to
embedded 2-spheres $O_1, \dots, O_p \subset Y_1$. Similarly,
$Y_2 = Y_{\a,\d}$ is obtained from $Y_3$ by surgery along some framed 0-spheres,
giving rise to embedded spheres $O_1',\dots,O_s' \subset Y_2$.
In Lemma~\ref{lem:replacepsi}, we achieved that $\s(\x) = \s|_{Y_{\a,\b}}$
and $\s(\y) = \s|_{Y_{\a,\d}}$. This implies that $\s(\x)$ extends
to $X_1$, or equivalently, that $\langle\, c_1(\s(\x)), [O_i] \,\rangle = 0$
for every $i \in \{1,\dots,p\}$. Similarly, $\langle\, c_1(\s(\y)), [O_j'] \,\rangle = 0$
for every $j \in \{1,\dots,s\}$. However
\[
\langle\, c_1(\s(\x)), [O_i] \,\rangle = \langle\, c_1(\rs(\x)), [O_i] \,\rangle = \langle\, c_1(\rs(\x),t), [O_i] \,\rangle,
\]
as $\rs(\x)$ and $\s(\x)$ are represented by the same vector field on~$M_1 \subset Y_1$.
Since $M_0$ obtained from $M_1$ by compressing the 2-spheres $O_1,\dots,O_p$, the equality
\[
\langle\, c_1(\rs(\x),t), [O_i] \,\rangle = 0
\]
implies that $\rs(\x)$ extends to $\rs_1 \in \spinc(\W_1)$.
Similarly, $\rs(\y)$ extends to $\rs_3 \in \spinc(\W_3)$. The Mayer-Vietoris sequence now implies
that there is a $\spinc$ structure $\rs \in \spinc(\W)$ such that $\rs|_{\W_1} = \rs_1$,
$\rs|_{\W_2} = \rs(\psi)$, and $\rs|_{\W_3} = \rs_3$.

We are now ready to prove that, for the Whitney triangle $\psi$ constructed above, $c=0$.
Recall that, by definition,
\[
c = \A_{S_2}(\y) - \A_{S_1}(\x)  = \langle c_2(\rs(\y), t), [S_2] \rangle - \langle c_1(\rs(\x), t), [S_1] \rangle,
\]
cf.~Equation~\eqref{eq:c}, Definition~\ref{def:Alexandergrading}, and Remark~\ref{rem:Alexandergrading}.
Since $\psi$ is a Whitney triangle connecting $\x$, $\theta$, and $\y$, we have
that $\rs(\psi)|_{M_1} = \rs(\x)$ and $\rs(\psi)|_{M_2} = \rs(\y)$, and therefore
\[
c =  \langle c_1(\rs(\psi), t), [S_2] \rangle - \langle c_1(\rs(\psi), t), [S_1] \rangle.
\]
Notice that we can omit the restrictions of the (relative) $\spinc$ structures
by the naturality of Chern classes.

Now the relative $\spinc$ structure $\rs(\psi)$ extends to some relative $\spinc$
structure $\rs \in \spinc(\W)$ by the above. Then, by
Lemmas~\ref{lem:1handles} and~\ref{lem:3handles}, we have
\[
c = \langle c_1(\rs, t), [S_3] \rangle - \langle c_1(\rs, t), [S_0] \rangle.
\]
From Lemma \ref{lem:samespinc}, it finally follows that $c=0$.

We can now conclude the proof of Lemma~\ref{lem:2handles1}.
By Equation~\eqref{eq:c}, for any Whitney triangle $\psi \in \pi_2(\x,\theta,\y)$,
where $\x \in \T_\a \cap \T_\b$ and $\y \in \T_\a \cap \T_\d$ are such that $\s(\x)$ and
$\s(\y)$ extend to $X_1$ and $X_3$, respectively, we have
\[
\A_{S_2}(\y) - \A_{S_1}(\x) + n_z(\psi) - n_w(\psi) = 0.
\]
If $\psi$ contributes to the surgery map~$f_{\H^1, \L}(\x)$,
then $n_w(\psi) = 0$, and it has a pseudo-holomorphic representative, so $n_z(\psi) \ge 0$.
Consequently, $A_{S_2}(\y) \le A_{S_1}(\x)$, as desired.
\qed

\subsection{Naturality maps}
\label{sec:naturality}

Recall that~\cite{naturality}, given two admissible Heegaard diagrams $\H$ and $\H'$ for the same
3-manifold $Y$, there is a naturality map
\[
f_{\H,\H'} \colon \CFh(\H) \To \CFh(\H'),
\]
which is the composition of maps associated to isotopies of the attaching sets, handleslides,
(de)sta\-bi\-li\-sations, and diffeomorphisms of the Heegaard surface isotopic to the
identity in~$Y$. On the homology, it induces
an isomorphism
\[
F_{\H,\H'} \colon \HFh(\H) \to \HFh(\H')
\]
that is independent of the sequence of Heegaard moves.

In our case, $\H$ and $\H'$ are doubly pointed Heegaard diagrams, which
define the same decorated knot $(Y, K, P)$.
Together with Dylan Thurston, the first author proved \cite[Proposition~2.37]{naturality}
that $\H$ and $\H'$ can be connected by a sequence of Heegaard moves
\emph{that do not cross the basepoints $w$ and $z$}.
If we forget about the $z$ basepoint,
this sequence induces the naturality map $f_{\H,\H'} \colon \CFh(\H) \to \CFh(\H')$ above.
As we explained, the $z$ basepoints on $\H$ and $\H'$ induce
filtrations on $\CFh(\H)$ and $\CFh(\H')$. It follows from the work of Ozsv\'ath-Szab\'o~\cite{OSz3}
and Rasmussen~\cite{Ras} that, if $f_{\H,\H'}$ is the map
associated to either an isotopy, a handleslide, a (de)stabilization, or a diffeomorphism
of the Heegaard surface isotopic to the identity in~$Y$, then
it preserves the knot filtration.
If $f_{\H,\H'}$ is an isotopy map or a handleslide map, then the map induced
on the $E^1$ page is the corresponding naturality map $F_{\H, \H'}$ on $\HFKh$;
i.e., it is the map obtained by counting all holomorphic triangles that
do not cross~$z$. If $f_{\H,\H'}$ is a (de)stabilization or diffeomorphism map, then it is
an isomorphism of filtered complexes.

As the above result is only outlined
in the works of Ozsv\'ath and Szab\'o~\cite{OSz3} and Rasmussen~\cite{Ras},
we provide a bit more detail.
With the techniques of this paper, we can prove the following analogue
of Lemma~\ref{lem:2handles1}.

\begin{lem}
\label{lem:naturality}
Let $K$ be a null-homologous knot in
$Y = \#_{i=1}^p (S^1 \times S^2).$
Choose a Seifert surface $S$ for $K$.
Suppose that $\H$ and $\H'$ are admissible doubly pointed Heegaard diagrams for $(Y, K, P)$
that only differ by an isotopy or a handleslide.

Given an admissible doubly pointed triple diagram $(\S,\bolda,\boldb,\boldd,w,z)$
for the Heegaard move $\H \to \H'$, if $\x \in \T_\a \cap \T_\b$, then
for any $\y \in \T_\a \cap \T_\d$ that has non-trivial coefficient
in the expansion of $f_{\H, \H'}(\x)$, we have that
\[
\F_{S}(\y) \le \F_{S}(\x).
\]
Furthermore, if $\psi$ is a holomorphic triangle connecting $\x \in \T_\a \cap \T_\b$, $\theta \in \T_\b \cap \T_\d$
(the top-dimensional generator of $\CFh(\S,\boldb,\boldd,w,z)$),
and $\y \in \T_\a \cap \T_\d$ that does not cross $w$, then
\begin{equation} \label{eqn:filtnat}
\F_{S}(\y) = \F_{S}(\x) - n_z(\psi).
\end{equation}
\end{lem}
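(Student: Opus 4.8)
The plan is to run the proof of Lemma~\ref{lem:2handles1} again in the present setting, where the two decorated knots both coincide with $(Y,K,P)$, the $3$-manifolds $Y_{\a,\b}$ and $Y_{\a,\d}$ are both $(Y,K)$ and are canonically identified by the isotopy or handleslide, and $Y_{\b,\d}$ is a connected sum of copies of $S^1\times S^2$ containing the unknot. For a Whitney triangle $\psi\in\pi_2(\x,\theta,\y)$, with $\x\in\T_\a\cap\T_\b$, $\y\in\T_\a\cap\T_\d$, and $\theta$ the top generator of $\CFh(\S,\boldb,\boldd,w,z)$, set
\[
c = \A_S(\y)-\A_S(\x)+n_z(\psi)-n_w(\psi).
\]
Exactly as in Lemma~\ref{lem:2handles1}, both assertions follow once we know $c=0$ whenever $\s(\x)$ and $\s(\y)$ correspond under $Y_{\a,\b}\cong Y\cong Y_{\a,\d}$: a $\psi$ contributing to $f_{\H,\H'}(\x)$ has $n_w(\psi)=0$ and a holomorphic representative, so $n_z(\psi)\ge 0$ and $\A_S(\y)\le\A_S(\x)$, and any $\psi$ with $n_w(\psi)=0$ gives $\A_S(\y)=\A_S(\x)-n_z(\psi)$, which is Equation~\eqref{eqn:filtnat}.

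First I would show that $c$ does not depend on $\psi$ for fixed $\x$ and $\y$. The domains of two such triangles differ by a triply periodic domain, so it suffices to prove $n_z(\P)=n_w(\P)$ for every $\P\in\Pi_{\a,\b,\d}$. Proposition~\ref{prop:doublyperiodic}, applied to the unknot defined by $(\S,\boldb,\boldd,w,z)$, gives $n_z=n_w$ on $\Pi_{\b,\d}$; and, since $Y_{\b,\d}\cong\#^d(S^1\times S^2)$, every $\P\in\Pi_{\a,\b,\d}$ can be written as the sum of an element of $\Pi_{\b,\d}$ and a domain $\P'$ with $\de\P'\in\spn{\bolda\cup\boldb}$, so that $\P'\in\Pi_{\a,\b}$ and $n_z(\P)-n_w(\P)=n_z(\P')-n_w(\P')$. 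The latter vanishes by Proposition~\ref{prop:doublyperiodic} applied to the null-homologous knot $K\subset Y$ defined by $(\S,\bolda,\boldb,w,z)$, whence $n_z(\P)=n_w(\P)$. Independence of $c$ from $\x$ and $\y$ is then checked exactly as in the proof of Lemma~\ref{lem:2handles1}, by splicing a Whitney disk $\phi\in\pi_2(\x',\x)$ onto $\psi$ and using Lemma~\ref{lem:Alexanderdomain}.

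It remains to produce, in each $\spinc$ structure on $Y$, one triangle with $c=0$. As in the proof of Lemma~\ref{lem:2handles1}, isotoping the $\a$-curves yields a small triangle $\td\psi\in\pi_2(\td\x,\theta,\td\y)$ with $n_z(\td\psi)=n_w(\td\psi)=0$; and, by representing a prescribed class of $H_1(\S)$ by a simple closed curve disjoint from $w$ and $z$ and performing a finger move on the $\a$-curves along it (the analogue of Lemma~\ref{lem:replacepsi}, whose homological input is immediate because $\spn{\boldb}=\spn{\boldd}$ in $H_1(\S)$ makes the quotient maps $H_1(\S)\to H_1(Y_{\a,\b})$ and $H_1(\S)\to H_1(Y_{\a,\d})$ coincide), we may arrange $\s(\x)$ to be any prescribed $\spinc$ structure and $\s(\y)$ to be the corresponding one. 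For this triangle $c=\A_S(\y)-\A_S(\x)$, so it is enough to see $\langle c_1(\rs(\x),t),[S]\rangle=\langle c_1(\rs(\y),t),[S]\rangle$. This plays the role of Lemma~\ref{lem:samespinc}: the small triangle realizes the canonical diffeomorphism $M_{\a,\b}\cong M_{\a,\d}$ of knot exteriors, under which $\rs(\x)$ is carried to $\rs(\y)$, $t$ to $t$, and $[S]$ to $[S]$, so the two Chern numbers coincide and $c=0$.

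The step I expect to be the main obstacle is precisely this last identification $\A_S(\y)=\A_S(\x)$ for the small triangle: it is exactly the statement that a handleslide or isotopy preserves the Alexander grading, and, as in \cite{OSz3} and \cite{Ras} where it is only outlined, it has to be extracted from the relative $\spinc$ bookkeeping rather than quoted. By contrast, the triply periodic domain step is routine once one uses the periodic domains of $\#^d(S^1\times S^2)$, and the finger-move step is lighter than in Lemma~\ref{lem:replacepsi} because here there is no ambient $4$-manifold whose $\spinc$ structures need to be matched.
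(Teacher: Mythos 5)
Your overall skeleton — define $c = \A_S(\y)-\A_S(\x)+n_z(\psi)-n_w(\psi)$, kill triply periodic domains, deduce independence of $c$ from $\psi$, $\x$, $\y$ within a $\spinc$ structure, then show $c=0$ — is the same as the paper's. Your handling of the periodic domains is a legitimate, more hands-on alternative: the paper reruns Proposition~\ref{prop:triplyperiodic} with $X \cong Y \times I$ (the only input needed being the surjectivity of $\rho$ onto $H_2(X)$, cf.~Remark~\ref{rem:aftertp}), while you subtract $\b\d$-periodic domains to erase the $\d$-boundary and reduce to $\Pi_{\a,\b}$. That decomposition is correct, but the honest reason is not ``$Y_{\b,\d} \cong \#^d(S^1\times S^2)$''; it is that each $\d_i$ is an isotopic translate or handleslide of $\b$-curves, so there is an explicit $\b\d$-periodic domain (a thin annulus or pair of pants) whose boundary contains $\d_i$ with coefficient one, which is what lets you realize any integral $\d$-part. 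With that said, this step and the appeal to Proposition~\ref{prop:doublyperiodic} are fine.

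The genuine divergence, and the gap, is at $c=0$. The paper constructs no triangle at all: within each $\spinc$ structure the triangle map shifts the Alexander grading by the constant $c$, and since the naturality map is already known to preserve the graded Euler characteristic (equivalently, $\HFKh$ with its Alexander grading is diagram-independent), the shift is forced to vanish; this is also why the paper remarks that no isotopy of the $\a$-curves is needed, as the relevant $\spinc$ structures automatically extend over the product pieces. Your route instead needs, in every $\spinc$ structure, a small triangle $\td\psi$ with $\A_S(\td\y)=\A_S(\td\x)$, and your justification --- that ``the small triangle realizes the canonical diffeomorphism $M_{\a,\b}\cong M_{\a,\d}$, under which $\rs(\td\x)$ is carried to $\rs(\td\y)$'' --- is precisely the statement that an isotopy or handleslide preserves the Alexander grading, i.e.\ the essential content of the lemma you are proving (and exactly the point only outlined in \cite{OSz3,Ras}). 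As written this is asserted, not proven, so the argument is circular at its crux. It can be repaired: since $\s(\theta)$ is torsion, $\rs(\psi)$ extends over the product sutured cobordism obtained by filling $Y_{\b,\d}$, and one can then run the $c_1$-naturality argument of Lemma~\ref{lem:samespinc} there (the two copies of $S$ being homologous in the product) — note that Lemma~\ref{lem:samespinc} itself does not apply verbatim, as here $H_2(X)\neq 0$; or one can simply replace the whole step by the paper's Euler-characteristic argument, which also dispenses with the finger moves that your approach needs to reach the remaining $\spinc$ structures.
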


\begin{rem}
The (de)stabilization and diffeomorphism maps do not appear in Lemma~\ref{lem:naturality}
because they are not triangle maps.
They are already isomorphisms at the level of filtered chain complexes.
\end{rem}

\begin{idea}
As in the proof of Lemma~\ref{lem:2handles1}, we let
\[
c = \A_S(\y) - \A_S(\x) - n_w(\psi) + n_z(\psi),
\]
and prove that this is independent of $\psi$, $\x$, and $\y$.
The main differences from the proof of Lemma~\ref{lem:2handles1}
are the following:
\begin{itemize}
\item{\textbf{Triply periodic domains:} We closely follow the
proof of Proposition~\ref{prop:triplyperiodic}. In this case,
$X \cong Y \times I$, the boundary of the 4-manifold $X_\triangle$
consists of $Y \sqcup Y \sqcup Y_{\boldb,\boldd}$, and the cobordisms
$X_1$ and $X_3$ are replaced by identity cobordisms $Y \times I$.
Finally, the proof of the injectivity of $\varphi$ follows from the
surjectivity of the map
\[
\rho \colon H_2(Y \times I) \oplus H_2(Y \times I) \oplus H_2(\hat X) \To H_2(X),
\]
as noted in Remark~\ref{rem:aftertp}.
}
\item{\textbf{Doubly periodic domains:} One can use
Proposition~\ref{prop:doublyperiodic} for the two copies of $Y$
and for $Y_{\boldb,\boldd}$.}
\item{\textbf{Proving that $c=0$:} This is easier
than in the case of the $2$-handle maps, because we already know
that the naturality map preserves the graded Euler characteristic,
and this forces the grading shift $c$ to be $0$. Also, as $X_1$ and $X_3$
are products, $\spinc$ structures automatically extend to them, hence we do not
need to isotope the $\a$-curves.}
\qed
\end{itemize}
\end{idea}

\subsection{Proof of Theorem~\ref{thm:main1}}
We are now ready to prove Theorem~\ref{thm:main1}.
In the proof we use the notation introduced in Section~\ref{sec:notation},
and we assume that the gluing map is the identity map, as explained in
Section~\ref{sec:gluingmap}.

Suppose that $\x$ is a generator of $\CFh(\H^0)$ such that $f_\CC(\x) \neq 0$.
Let $\y$ be a generator of $\CFh(\H^2_{\mb S})$ that appears in the expression
of $f_\CC(\x)$ with non-zero coefficient. Then there exist generators $\x' \in \CFh(\H^0_\bP)$,
$\x'' \in \CFh(\H^1)$, $\y'' \in \CFh(\H^1_\L)$, and $\y' \in \CFh(\H^2)$ that appear with
non-zero coefficient in $f_{\H^0, \bP}(\x)$, $f_{\H^0_\bP, \H_1}(\x')$, $f_{\H^1, \L}(\x'')$, and
$f_{\H^1_\L, \H^2}(\y'')$, respectively, and such that
$\y$ appears with non-zero coefficient in $f_{\H^2, \bS}(\y')$.
Notice that, by construction, $\s(\x'')$ extends to~$X_1$ and $\s(\y'')$ extends to~$X_3$.

By Lemma~\ref{lem:naturality}, we know that the naturality maps preserve the knot filtration,
and by Corollaries~\ref{cor:1handles} and~\ref{cor:3handles} so do the
maps $f_{\H^0, \bP}$ and $f_{\H^2, \bS}$. Finally, Lemma~\ref{lem:2handles1}
proves that $\F_{S_2}(\y'') \leq \F_{S_1}(\x'')$. By putting all these together,
we obtain that
\begin{equation}
\label{eqn:preservefiltration}
\F_{S_3}(\y) = \F_{S_2}(\y') \leq \F_{S_2}(\y'') \leq \F_{S_1}(\x'') \leq \F_{S_1}(\x') = \F_{S_0}(\x).
\end{equation}
Thus $f_\CC$ is a map of filtered complexes and so, by Remark~\ref{rem:morphss},
it induces a morphism of spectral sequences.

Furthermore, each of the maps $f_{\H^0, \bP}$, $f_{\H^0_\bP, \H^1}$,
$f_{\H^1, \L}$, $f_{\H^1_\L, \H^2}$, and $f_{\H^2, \bS}$ is a map
of filtered complexes. The map induced by $f_\CC$ on the $E^1$ page
is the composition of the maps induced by each of the above maps on the $E^1$ page.

We now consider the case when the inequalities in
Equation~\eqref{eqn:preservefiltration} are all equalities.
Lemmas~\ref{lem:1handles} and~\ref{lem:3handles} imply that the maps
induced by $f_{\H^0, \bP}$ and $f_{\H^2, \bS}$ on the $E^1$ page
are the 1- and 3-handle maps for $\HFKh$. As for the 2-handle map $f_{\H^1, \L}$,
by Equation~\eqref{eq:shift} in Lemma~\ref{lem:2handles1}, we have that $\F(\y'') = \F(\x'')$
if and only if there is a pseudo-holomorphic triangle $\psi$ connecting
$\x''$, $\theta$, and $\y''$ such that $n_w(\psi) = n_z(\psi) = 0$,
and in this case all such holomorphic triangles satisfy this equality.
Hence, the map induced by $f_{\H^1, \L}$ on the $E^1$ page is the 2-handle
map for $\HFKh$.
Finally, it follows from the discussion in Section~\ref{sec:naturality}
that the maps induced on the $E^1$ page by the naturality maps for $\CFh$
are the naturality maps for $\HFKh$. Alternatively, one can use Equation~\eqref{eqn:filtnat}
in Lemma~\ref{lem:naturality} and argue in the same way as for the 2-handle
maps.

This immediately implies that the map induced by $f_\CC$ on the $E^1$ page is
obtained by counting (for the naturality maps and the 2-handle map)
the pseudo-holomorphic triangles that do not cross $w$ and $z$, and so it is $F_\CC$.

On the other hand, the map induced by $f_\CC$ on the total homology
is given by counting all holomorphic triangles that do not cross $w$
but might cross $z$. This is precisely the map~$\hat{F}_X \colon \HFh(S^3) \to \HFh(S^3)$
induced by the cobordism $X$.
Since $H_1(X) = H_2(X) = 0$, the map $\hat{F}_X = \Id_{\HFh(S^3)}$ by~\cite[Lemma~3.4]{OSz14}.
\qed


\section{Concordance maps preserve the homological grading} \label{sec:homol}

In this section, we show that concordance maps also behave well
with respect to another grading of $\CFh$, namely the homological
grading.

Let $\H$ be an admissible pointed Heegaard diagram for the closed, connected, oriented,
based 3-manifold~$(Y,w)$, together with a $\spinc$ structure $\s \in \spinc(Y)$
such that $c_1(\s) \in H^2(Y)$ is torsion.
Ozsv\'ath and Szab\'o~\cite[Section~4]{OSz} showed that
$\CFh(\H, \s)$ admits a relative $\Z$-grading.
For generators $\x$, $\y \in \CFh(\H, \s)$ and $\phi \in \pi_2(\x,\y)$, we have
\begin{equation}
\label{eq:Maslovgrading}
\gr(\x,\y) = \mu(\phi) - 2 n_w(\phi).
\end{equation}
They showed~\cite[Theorem~7.1]{OSz10} that this can be lifted to an absolute $\Q$-grading~$\agr$,
in the sense that $\gr(\x,\y) = \agr(\x) - \agr(\y)$.
Such a grading is called the \emph{Maslov grading} or \emph{homological grading}.

\begin{ex}
\label{ex:S3}
If $Y = S^3$ with its unique $\spinc$ structure~$\s_0$, and if $\H$ is a Heegaard diagram of~$Y$,
then on $\CFh(\H,\s_0)$ the absolute $\Q$-grading is actually an absolute $\Z$-grading.
The generator of $\HFh(S^3,\s_0) \cong \Z_2$ is homogeneous of grading zero.

More generally, if $Y = \#_{i=1}^k (S^1 \times S^2)$ with Heegaard diagram~$\H$,
and $\s_0 \in \spinc(Y)$ is such that $c_1(\s_0) = 0$,
then $\agr$ is an absolute $\Z$-grading on $\CFh(\H,\s_0)$.
\end{ex}

The main result of this section is the following.

\begin{thm}
\label{thm:homologicalgrading}
Let $\CC$ be a decorated concordance from $(S^3, K_0, P_0)$ to $(S^3, K_1, P_1)$,
and let $\H_i$ be an admissible doubly pointed diagram of $(S^3, K_i, P_i)$
for $i \in \{0,1\}$. Then, the chain map
\[
f_\CC \colon \CFh(\H_0) \To \CFh(\H_1)
\]
preserves the absolute homological grading; that is, if $x \in \CFh(\H_0)$ is $\agr$-ho\-mo\-ge\-neous,
so is $f_\CC(x)$, and if $f_\CC(x) \neq 0$, then
\[
\agr(f_\CC(x)) = \agr(x).
\]
\end{thm}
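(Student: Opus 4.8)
The plan is to run through the decomposition $f_\CC = f_{\H^2,\bS} \circ f_{\H^1_\L, \H^2} \circ f_{\H^1,\L} \circ f_{\H^0_{\bP},\H^1} \circ f_{\H^0, \bP}$ from Section~\ref{sec:notation} and track the absolute Maslov grading through each factor separately, exactly as we did for the Alexander filtration in the proof of Theorem~\ref{thm:main1}. Since every intermediate manifold is of the form $\#_k(S^1\times S^2)$ (or $S^3$) in the $\spinc$ structure with torsion $c_1$, Example~\ref{ex:S3} guarantees that $\agr$ is an honest absolute $\Z$-grading on each $\CFh(\H^j)$, so it makes sense to ask that each map be grading-preserving. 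Because $\agr$ is defined by a grading shift formula relative to a cobordism, the natural statement to aim for is: each of the five maps shifts the absolute grading by a constant computable from the grading shift formula of Ozsv\'ath--Szab\'o, and these constants sum to zero precisely because $H_1(X) = H_2(X) = 0$ and $F$ is an annulus, so that the total cobordism $X$ (after filling in the vertical boundary, as in Section~\ref{sec:gluingmap}) carries the trivial grading shift.

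Here is the order in which I would carry this out. First, I would recall the grading shift formula for the handle maps in Heegaard Floer homology: for the cobordism map $\hat F_{X,\s}$ associated to a $\spinc$ cobordism $(X,\s)$ between rational homology spheres (or, more generally, between connected sums of $S^1\times S^2$ with $c_1(\s)$ torsion), the homological degree shift is $(c_1(\s)^2 - 2\chi(X) - 3\sigma(X))/4$; this is \cite[Theorem~7.1]{OSz10} and the discussion of the absolute grading following it. The key inputs for our $X$ are $\chi(X) = 0$, $\sigma(X) = 0$, and $c_1(\s)^2 = 0$ (the unique $\spinc$ structure has torsion, indeed zero, first Chern class since $H^2(X)\cong H^2(X,\partial X)\cong 0$ by the homology computation preceding the statement), so $\hat F_X$ is grading-preserving. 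Second, I would argue that the $1$-handle map $f_{\H^0,\bP}$ and the $3$-handle map $f_{\H^2,\bS}$ preserve the absolute grading: this is the analogue of Corollaries~\ref{cor:1handles} and~\ref{cor:3handles}, and follows from the local nature of these maps together with the standard fact that $1$- and $3$-handle maps are grading-preserving (see \cite[Section~4.3]{OSz10}; $\chi = \sigma = 0$ and no $c_1^2$ contribution for such handles). Third, the naturality maps $f_{\H^0_\bP,\H^1}$ and $f_{\H^1_\L,\H^2}$ preserve $\agr$ because they are compositions of isotopy, handleslide, and (de)stabilization maps, each of which is grading-preserving on $\CFh$ by \cite{OSz}. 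Fourth, for the $2$-handle map $f_{\H^1,\L}$ I would combine the $2$-handle grading shift formula with the topological facts about $X_2$ (again $\chi$, $\sigma$, and $c_1^2$ all vanish because $H_1(X)=H_2(X)=0$ forces $X_2$ to have the appropriate intersection form, and the relevant $\spinc$ structure extends uniquely to $X$), concluding that $f_{\H^1,\L}$ shifts $\agr$ by zero as well. Finally, composing the five zero shifts gives $\agr(f_\CC(x)) = \agr(x)$.

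The main obstacle I anticipate is the $2$-handle step, for essentially the same reason it was the hard case for the Alexander filtration: controlling the Maslov grading through $f_{\H^1,\L}$ requires knowing the precise grading shift of the triangle-counting map, which in general depends on the $\spinc$ structure on $X_\triangle$ and its self-intersection, and one must verify that the relevant topological quantities vanish. The cleanest route is probably not to compute $c_1^2$, $\chi$, $\sigma$ of $X_2$ in isolation, but rather to use the composition law for cobordism maps: since $\hat F_X = \hat F_{X_3}\circ \hat F_{X_2}\circ \hat F_{X_1}$ and we already know $\hat F_X$, $\hat F_{X_1}$ (the $1$-handle part), and $\hat F_{X_3}$ (the $3$-handle part) are all grading-preserving, the grading shift of $\hat F_{X_2}$ must also be zero. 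This sidesteps any direct intersection-form bookkeeping on $X_\triangle$. An alternative, fully self-contained approach mirrors the ``Idea of the proof'' of Lemma~\ref{lem:2handles1}: fix a Whitney triangle $\psi$, write $\gr(\x) - \gr(\y)$ in terms of $\mu(\psi)$ and $n_w(\psi)$ via Equation~\eqref{eq:Maslovgrading} applied across the triangle, show this is independent of $\psi$, $\x$, $\y$ using that triply periodic domains split into doubly periodic ones (Proposition~\ref{prop:triplyperiodic}) on which the Maslov index vanishes, and then pin the constant to zero by exhibiting a small triangle as in Lemma~\ref{lem:replacepsi}; but the composition-law argument is shorter and I would present that, relegating the hands-on version to a remark.

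\begin{rem}
In fact the composition-law argument shows more: since every map in the decomposition of $f_\CC$ preserves $\agr$, and since $1 \in \HFh(S^3)$ sits in grading zero (Example~\ref{ex:S3}), the induced map $F_\CC$ on $\HFKh$ preserves the Maslov grading as claimed in Theorem~\ref{thm:splitting}, completing its proof.
\end{rem}
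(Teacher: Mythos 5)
Your overall skeleton (decompose $f_\CC$ into the 1-handle, naturality, 2-handle, naturality, 3-handle pieces and track $\agr$ through each) matches the paper, but the central claim your argument rests on is false: the individual handle maps do \emph{not} preserve the absolute grading. By the very shift formula you quote, $\bigl(c_1(\s)^2 - 2\chi - 3\sigma\bigr)/4$, the trace of the $1$-handle attachments has $\chi(X_1) = -c_1$ (not $0$), so $f_{\H^0,\bP}$ shifts $\agr$ by $c_1/2$; dually the $3$-handle map shifts by $c_3/2$, and consequently the $2$-handle map must shift by $-(c_1+c_3)/2$ so that the total over $X$ (where indeed $\chi = \sigma = c_1^2 = 0$) is zero. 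Concretely, the $1$-handle map is $\x \mapsto \x \otimes \theta$ with $\theta$ a generator of $\HFh(S^1\times S^2)$ sitting in degree $\pm 1/2$, not $0$. So your step two is wrong, your composition-law shortcut for the $2$-handle piece inherits the same wrong premise (it would force the $2$-handle shift to be $-(c_1+c_3)/2$, not $0$), and your fallback of "pinning the constant to zero by a small triangle as in Lemma~\ref{lem:replacepsi}" would be proving a false statement: the small triangle has $\mu = n_w = n_z = 0$, but the grading difference of its endpoints is exactly the (generally nonzero) shift, and there is no Maslov analogue of Lemma~\ref{lem:samespinc} to make it vanish. Only the \emph{sum} of the five shifts is zero, and nothing in your write-up establishes that sum independently of the false individual claims.

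The paper's proof is structured precisely to avoid this: for each piece it only shows the \emph{relative} grading $\gr$ is preserved (for the $2$-handle map this uses Proposition~\ref{prop:triplyperiodic} to reduce triply periodic domains to doubly periodic ones, together with $\mu(\P) = \langle c_1(\s(\bz)), H(\P)\rangle + 2n_w(\P)$ and the vanishing of $c_1$ on the belt spheres, giving $\mu(\P) = 2n_w(\P)$), so that $f_\CC$ shifts $\agr$ by some constant $e$; the constant is then pinned to $e = 0$ not by the shift formula but by the fact, already established in the proof of Theorem~\ref{thm:main1}, that $f_\CC$ induces $\Id_{\HFh(S^3)}$ on total homology, which is homogeneous of degree zero. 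That final step is entirely missing from your proposal, and it is the step that actually closes the argument. (A secondary issue: invoking the Ozsv\'ath--Szab\'o shift formula for these maps also presupposes an identification of the sutured-theoretic handle maps with the closed-manifold cobordism maps, which the paper sidesteps by arguing directly with domains via Equation~\eqref{eq:Maslovgrading}.) If you want to salvage your route, replace "each piece preserves $\agr$" by "each piece preserves $\gr$" and then add the total-homology argument to kill the overall constant.
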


\begin{rem}
Notice that the statement of Theorem~\ref{thm:homologicalgrading}
is stronger than the fact that $f_\CC$ preserves the Maslov filtration.
We actually claim that the Maslov grading is not decreased by $f_\CC$.
\end{rem}

\begin{idea}
We proceed similarly to the proof of Theorem~\ref{thm:main1},
and use the notation from Section~\ref{sec:notation} and Figure~\ref{fig:notation}.
As the diffeomorphism $D$ constructed in Section~\ref{sec:gluingmap}
induces a homomorphism $D_*$ that preserves the homological grading,
we can assume the gluing map is trivial and we are dealing with a special cobordism.

First, we prove that, in the right $\spinc$ structure, each map $f_{\H^0, \bP}$, $f_{\H^0_\bP, \H^1}$,
$f_{\H^1, \L}$, $f_{\H^1_\L, \H^2}$, and $f_{\H^2, \bS}$
preserves the relative Maslov grading~$\gr$.
This is only implicit in the work of Ozsv\'ath and Szab\'o~\cite{OSz10},
so we provide more detail.
Then we show that the absolute grading shift of $f_\CC$,
which is the composition of all the above maps, is zero.

For the 1- and 3-handle maps $f_{\H^0, \bP}$ and $f_{\H^2, \bS}$,
it is straightforward to check that the relative Maslov
grading is preserved using Equation~\eqref{eq:Maslovgrading} above.

Now consider the 2-handle map $f_{\H^1, \L}$.
Let $(\S,\bolda,\boldb,\boldd,w,z)$ be an admissible triple Heegaard diagram subordinate
to a bouquet for $\L$.
For generators $\x \in \T_\a \cap \T_\b$ and $\y \in \T_\a \cap \T_\d$
such that $\s(\x) = \s|_{Y_{\a,\b}}$ and $\s(\y)=\s|_{Y_{\a,\d}}$, where $\s$
denotes the unique $\spinc$ structure on $X$, and
for every Whitney triangle $\psi \in \pi_2(\x, \theta, \y)$,
we let
\[
d = \agr(\y) - \agr(\x) + \mu(\psi) - 2 n_w(\psi).
\]
We show that $d$ is independent of $\psi$, $\x$, and $\y$.
Since the triangles $\psi$ contributing to $f_{\H^1, \L}$ have
$\mu(\psi)=0$ and $n_w(\psi)=0$, it follows that the absolute grading
is shifted by $d$, so the relative grading is preserved.

We already know from the work of Ozsv\'ath and Szab\'o~\cite{OSz} that
the naturality maps $f_{\H^0_\bP, \H^1}$ and $f_{\H^1_\L, \H^2}$
preserve the relative homological grading~$\gr$.
Alternatively, this can also be shown using the techniques
of Section~\ref{sec:naturality}.

Finally, $f_\CC$, which is the composition of
all the above maps, preserves the relative homological grading,
or equivalently, it shifts the absolute homological grading by
some constant~$e$. This implies that, for every $r \in \NN$, the map $E^r(f_\CC)$ shifts the homological
grading by the same constant $e$ independent of~$r$. Since we know that
the map in total homology is $\Id_{\HFh(S^3)}$ and preserves the absolute
grading by \cite[Lemma~3.4]{OSz14}, it immediately follows that $e=0$.
\qed
\end{idea}

The rest of this section is devoted to filling in the details of the above outline.

\subsection{$\spinc$ structures}
\label{sec:hgetridspinc}

Let $\s$ be the unique $\spinc$ structure on $X$. Then
\[
f_\CC = f_{\CC, \s} = f_{\H^2, \bS, \s} \circ \cdots \circ f_{\H^0, \bP, \s},
\]
where the restrictions of $\s$ are omitted for the sake of clarity.

So it suffices to consider the above maps in the $\spinc$ structure~$\s$. In the rest of the
section, we will focus on the maps $f_{\H^2, \bS, \s}, \ldots, f_{\H^0, \bP, \s}$,
and for simplicity, we will denote the restrictions of~$\s$ by the same letter.

\subsection{1- and 3-handles}

The 1-handle map $f_{\H^0, \bP,\s}$ satisfies the following.

\begin{lem}
\label{lem:h1handles}
Let $\x''$, $\td\x'' \in \CFh(\H^0, \s)$ be generators. Then
\[
\gr(\x'',\td\x'') = \gr(f_{\H^0, \bP,\s}(\x''), f_{\H^0, \bP,\s}(\td\x''));
\]
i.e., the relative homological grading is preserved under the 1-handle map.
\end{lem}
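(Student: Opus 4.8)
The plan is to unwind the definition of the $1$-handle map from \cite[Definition~7.5]{cob} and reduce the statement to the invariance of the Maslov index and of the basepoint multiplicity under the connected sum that produces $\H^0_\bP$ from $\H^0$. Recall that if the $1$-handles are attached along $p$ framed pairs of points in $\Int(M_0)$, then $\H^0_\bP$ is obtained from $\H^0 = (\S^0,\bolda^0,\boldb^0,w^0,z^0)$ by forming the connected sum, at $p$ points of $\S^0$ lying far from the basepoints and from $\bolda^0 \cup \boldb^0$, with $p$ copies of a fixed genus-one diagram $(E_i,\alpha_i^0,\beta_i^0)$ in which $\alpha_i^0 \cap \beta_i^0 = \{\theta_i^+,\theta_i^-\}$; on generators, $f_{\H^0,\bP}(\x) = \x \times \{\theta_1^+,\dots,\theta_p^+\}$. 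In particular, the basepoint $w = w^0$ still lies in the region of the new Heegaard surface identified with $\S^0$, away from the connected-sum necks.

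First I would fix generators $\x'',\td\x'' \in \CFh(\H^0,\s)$ and choose a Whitney disk $\phi \in \pi_2(\x'',\td\x'')$. Because the images $f_{\H^0,\bP,\s}(\x'')$ and $f_{\H^0,\bP,\s}(\td\x'')$ carry the same intersection point $\theta_i^+$ in each piece $E_i$, the class $\phi$ lifts to a canonical class $\phi' \in \pi_2(f_{\H^0,\bP,\s}(\x''),f_{\H^0,\bP,\s}(\td\x''))$ whose domain is supported away from the $E_i$ and coincides with $\D(\phi)$ under the identification of $\S^0$ with the complement of the necks. Hence $n_w(\phi') = n_w(\phi)$, and also $\mu(\phi') = \mu(\phi)$ by the standard additivity of the Maslov index under this connected sum: in the cylindrical picture, writing $\mu$ in terms of the Euler measure and the point measures of the domain, the Euler measure of $\D(\phi') = \D(\phi)$ is unchanged while the point measures at $\theta_i^+$ vanish at both ends of $\phi'$, since $\D(\phi')$ has trivial multiplicities in the $E_i$; alternatively one cites the connected-sum computation underlying the $1$-handle map in \cite{OSz10} and \cite{cob}. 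Applying Equation~\eqref{eq:Maslovgrading} to $\phi$ and to $\phi'$ then yields
\[
\gr(f_{\H^0,\bP,\s}(\x''),f_{\H^0,\bP,\s}(\td\x'')) = \mu(\phi') - 2n_w(\phi') = \mu(\phi) - 2n_w(\phi) = \gr(\x'',\td\x''),
\]
which is the assertion of the lemma.

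I expect the only delicate point to be the equality $\mu(\phi') = \mu(\phi)$: a priori one must check that attaching the standard genus-one piece, and carrying the generator $\theta_i^+$ at both ends of $\phi'$, does not change the expected dimension of the relevant moduli space. This is precisely the grading bookkeeping that accompanies the original $1$-handle map of Ozsv\'ath and Szab\'o, and with the connected sum arranged so that the pertinent domains avoid the necks it reduces to the domain formula quoted above; I regard this as routine rather than substantive. Finally, the dual statement for the $3$-handle map $f_{\H^2,\bS,\s}$ follows from the same argument applied to the turned-around cobordism, exactly as Lemma~\ref{lem:3handles} was deduced from Lemma~\ref{lem:1handles}.
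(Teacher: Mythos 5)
Your argument is correct and is essentially the paper's proof: the paper also takes a Whitney disk $\phi \in \pi_2(\x'',\td\x'')$, observes that its domain also represents a Whitney disk between the images in $\H^0_\bP$, and applies Equation~\eqref{eq:Maslovgrading}. You simply spell out the bookkeeping (trivial multiplicities near the connected-sum regions, vanishing point measures at the $\theta_i^+$) that the paper leaves implicit.
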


\begin{proof}
Let $\phi \in \pi_2(\x'',\td\x'')$. Then the domain of $\phi$ also represents
a Whitney disk between $f_{\H^0, \bP}(\x'')$ and
$f_{\H^0, \bP}(\td\x'')$ in the Heegaard diagram $\H^0_\bP$ that we also denote by~$\phi$.
By Equation~\eqref{eq:Maslovgrading}, we have
\[
\gr(\x'',\td\x'') = \mu(\phi) - 2 n_w(\phi) = \gr(f_{\H^0, \bP,\s}(\x''),f_{\H^0, \bP,\s}(\td\x'')).
\qedhere
\]
\end{proof}

A dual argument gives the following result for the 3-handle map $f_{\H^2, \bS,\s}$.

\begin{lem}
\label{lem:h3handles}
Let $\y'$, $\td\y' \in \CFh(\H^2, \s)$ be generators such that $f_{\H^2, \bS,\s}(\y') \neq 0$
and $f_{\H^2, \bS,\s}(\td\y') \neq 0$. Then
\[
\gr(\y',\td\y') = \gr(f_{\H^2, \bS,\s}(\y'),f_{\H^2, \bS,\s}(\td\y'));
\]
i.e., the relative homological grading is preserved under the 3-handle map.
\end{lem}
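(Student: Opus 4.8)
The plan is to mirror the proof of Lemma~\ref{lem:h1handles}, but dualized, exactly as the statement suggests. First I would observe that the 3-handle map $f_{\H^2,\bS}$ is, by construction in~\cite[Definition~7.8]{cob}, dual to a 1-handle map: attaching a 4-dimensional 3-handle along a framed sphere $\bS \subset M_2$ corresponds, after turning the cobordism upside down, to attaching a 1-handle along a framed pair of points. Concretely, on the level of Heegaard diagrams, passing from $\H^2$ to $\H^2_\bS$ deletes a pair of isotopic (stabilization-type) curves together with a canonical intersection point, and a generator $\x$ of $\CFh(\H^2,\s)$ either maps to the corresponding generator $\x^\flat$ of $\CFh(\H^2_\bS,\s)$ (obtained by forgetting that intersection point) or to zero, depending on which of the two intersection points of the deleted pair of curves it contains.

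Next I would run the same domain-counting argument as in Lemma~\ref{lem:h1handles}. Suppose $\y',\td\y' \in \CFh(\H^2,\s)$ satisfy $f_{\H^2,\bS,\s}(\y') \neq 0$ and $f_{\H^2,\bS,\s}(\td\y') \neq 0$, so that $f_{\H^2,\bS,\s}(\y') = (\y')^\flat$ and $f_{\H^2,\bS,\s}(\td\y') = (\td\y')^\flat$. Pick $\phi \in \pi_2(\y',\td\y')$. The key point is that the domain of $\phi$ in $\S^2$ descends to a domain $\phi^\flat$ in the reduced Heegaard surface $\S^2_\bS$, representing a class in $\pi_2((\y')^\flat,(\td\y')^\flat)$, with $n_w(\phi^\flat) = n_w(\phi)$ (the basepoint $w$ is away from the region affected by the destabilization) and $\mu(\phi^\flat) = \mu(\phi)$ (the Maslov index is unchanged because the disks differ only by a trivial product region near the destabilized curves, contributing $0$ to the index). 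Then by Equation~\eqref{eq:Maslovgrading},
\[
\gr(\y',\td\y') = \mu(\phi) - 2n_w(\phi) = \mu(\phi^\flat) - 2n_w(\phi^\flat) = \gr\big(f_{\H^2,\bS,\s}(\y'),f_{\H^2,\bS,\s}(\td\y')\big),
\]
as desired.

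I expect the main obstacle to be purely bookkeeping: making precise the claim that $\mu(\phi^\flat) = \mu(\phi)$, i.e.\ that the Maslov index is unaffected by the destabilization, and that $n_w$ is genuinely unchanged. Both of these follow from the standard behavior of the Maslov index under (de)stabilization in Heegaard Floer theory (cf.~\cite{OSz10}), together with the fact that the framed sphere $\bS$ lies in $\Int(M_2)$, so the destabilization region is disjoint from a neighborhood of $w$; there is no genuine analytic difficulty, only the need to invoke the correct index-additivity statement. The condition $f_{\H^2,\bS,\s}(\y') \neq 0$ is exactly what guarantees that $\y'$ survives to a generator of the reduced diagram, so that the correspondence $\y' \mapsto (\y')^\flat$ is defined, and this is the only place the hypothesis is used. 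Since this is the formal dual of Lemma~\ref{lem:h1handles}, it would be reasonable simply to write ``A dual argument gives the following'' and leave the short verification to the reader, as the authors have done elsewhere in the paper.
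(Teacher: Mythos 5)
Your proposal is correct and is essentially the paper's own argument: the paper disposes of the $3$-handle case by saying ``a dual argument gives'' Lemma~\ref{lem:h3handles}, i.e.\ one transfers a single Whitney disk between $\H^2$ and $\H^2_{\bS}$ and applies Equation~\eqref{eq:Maslovgrading}, exactly as you do, with the hypothesis $f_{\H^2,\bS,\s}(\y')\neq 0$ used only to ensure that $\y'$ and $\td\y'$ contain the surviving intersection point of the two deleted curves. The one substantive difference is the direction of the transfer: the literal dual of the proof of Lemma~\ref{lem:h1handles} takes a disk in the destabilized diagram $\H^2_{\bS}$ and extends its domain to $\H^2$ by a constant-multiplicity region on the genus-one summand, whereas you restrict a disk $\phi$ in $\H^2$ down to $\H^2_{\bS}$. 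Your restriction does work, but the parenthetical justification ``the disks differ only by a trivial product region, contributing $0$ to the index'' is not accurate as written: the part of $\D(\phi)$ lying on the genus-one $S^1\times S^2$ summand can be nontrivial (it may contain multiples of the two bigons and of the surface class). What rescues the step is that, since both generators contain the same point $\theta$, this torus part is a class in $\pi_2(\theta,\theta)$ of the standard genus-one diagram, whose Maslov index is twice its multiplicity $m$ at the connected-sum region (here one uses $\langle c_1(\s),[\bS]\rangle=0$, automatic in the torsion $\spinc$ structure at hand); combined with additivity of the Maslov index under connected sum of domains, $\mu(\phi)=\mu(\phi^\flat)+2m-2m=\mu(\phi^\flat)$, while $n_w$ is visibly unchanged. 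Extending a disk from the small diagram instead, and using that the relative grading in a torsion $\spinc$ structure may be computed from any single disk, avoids this bookkeeping entirely, which is presumably why the authors felt free to leave the verification to the reader.
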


\subsection{2-handles}

For 2-handles, we have the following.

\begin{lem}
\label{lem:h2handles}
Let $\x$, $\td\x \in \CFh(\H^1)$ be generators such that $\s(\x) = \s(\td\x) = \s$.
Then $f_{\H^1, \L,\s}(\x)$ and $f_{\H^1, \L,\s}(\td\x)$ are $\agr$-homogeneous, and if they are
non-zero, then
\[
\gr(\x,\td\x) = \gr(f_{\H^1, \L,\s}(\x), f_{\H^1, \L,\s}(\td\x)).
\]
\end{lem}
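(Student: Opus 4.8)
The plan is to mimic the structure of the argument for Lemma~\ref{lem:2handles1}, replacing the Alexander grading by the absolute Maslov grading throughout. Fix an admissible triple Heegaard diagram $(\S,\bolda,\boldb,\boldd,w,z)$ subordinate to a bouquet for the framed link~$\L$ defining $\W_2$, with $d = |\bolda| = |\boldb| = |\boldd|$ and $\d_i$ an isotopic translate of $\b_i$ for $i > \ell$. Let $\theta \in \T_\b \cap \T_\d$ be the top-graded generator. For generators $\x \in \T_\a \cap \T_\b$ and $\y \in \T_\a \cap \T_\d$ with $\s(\x) = \s|_{Y_{\a,\b}}$ and $\s(\y) = \s|_{Y_{\a,\d}}$, and a Whitney triangle $\psi \in \pi_2(\x,\theta,\y)$, set
\[
d = \agr(\y) - \agr(\x) + \mu(\psi) - 2 n_w(\psi).
\]
The goal is to show $d = 0$; since the triangles contributing to $f_{\H^1,\L,\s}$ have $\mu(\psi) = 0$ and $n_w(\psi) = 0$, this will give $\agr(\y) = \agr(\x)$ for any $\y$ appearing in $f_{\H^1,\L,\s}(\x)$, hence the claimed preservation of both the absolute grading (so $f_{\H^1,\L,\s}$ is $\agr$-homogeneous) and the relative grading.

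First I would show $d$ is independent of the choice of $\psi$ for fixed $\x$, $\y$. Two triangles $\psi_1, \psi_2 \in \pi_2(\x,\theta,\y)$ differ by a triply periodic domain $\P$, so $\mu(\psi_1) - \mu(\psi_2)$ and $n_w(\psi_1) - n_w(\psi_2)$ are governed by $\P$; by Proposition~\ref{prop:triplyperiodic}, $\P$ is a sum of doubly periodic domains, each supported in a diagram for a null-homologous knot in $\#^k(S^1 \times S^2)$, and the Maslov index of the corresponding periodic class is twice its $n_w$-multiplicity (this is the standard $\mu(\P) = 2 n_w(\P)$ for periodic domains in a torsion $\spinc$ structure, from \cite[Section~4]{OSz}; here $c_1(\s) = 0$ as in Example~\ref{ex:S3}). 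Hence $\mu(\psi_1) - 2 n_w(\psi_1) = \mu(\psi_2) - 2 n_w(\psi_2)$, and $d$ is well-defined. Independence of $\x$ and $\y$ follows exactly as in Lemma~\ref{lem:2handles1}: a Whitney disk $\phi \in \pi_2(\x',\x)$ can be pre-concatenated, and $\agr(\x') - \agr(\x) = \mu(\phi) - 2 n_w(\phi)$ by Equation~\eqref{eq:Maslovgrading}, which cancels the change in the triangle term.

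The remaining, and main, step is to prove $d = 0$. Here I expect the cleanest route is the one indicated in the idea of the proof: $d$ is the absolute grading shift of the 2-handle map in the $\spinc$ structure $\s$, and Ozsv\'ath and Szab\'o's grading shift formula \cite[Theorem~7.1]{OSz10} computes it in terms of $c_1(\s|_{X_\triangle})^2$ and the signature and Euler characteristic of $X_\triangle$. Since $H_1(X) = H_2(X) = 0$ and $\s$ is the unique $\spinc$ structure on $X$, one checks that this topological shift vanishes; alternatively, construct an explicit ``small'' triangle $\psi$ as in Figure~\ref{fig:tdpsi} and Lemma~\ref{lem:replacepsi} — after the finger-move correction achieving $\s(\x) = \s|_{Y_{\a,\b}}$, $\s(\y) = \s|_{Y_{\a,\d}}$ — and compute $\mu(\psi)$ and $n_w(\psi)$ directly, comparing $\agr(\x)$ and $\agr(\y)$ via the explicit local pictures. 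The subtlety is that Lemma~\ref{lem:replacepsi} only controls $n_z, n_w$ and the $\spinc$ structures, not $\mu$; so I would instead invoke the global grading shift formula, which sidesteps any delicate index computation. Finally, combining Lemmas~\ref{lem:h1handles}, \ref{lem:h3handles}, \ref{lem:h2handles} with the fact that the naturality maps preserve $\gr$ shows $f_\CC$ shifts $\agr$ by a constant $e$; as $f_\CC$ induces $\Id_{\HFh(S^3)}$ on total homology, which preserves $\agr$ by \cite[Lemma~3.4]{OSz14}, we get $e = 0$, completing Theorem~\ref{thm:homologicalgrading}.
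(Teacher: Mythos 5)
Your first two steps are sound and are exactly the paper's argument: using Proposition~\ref{prop:triplyperiodic} to reduce to doubly periodic domains, the identity $\mu(\P) = \langle c_1(\s(\bz)), H(\P)\rangle + 2n_w(\P)$ together with the vanishing of the Chern pairing (the relevant $\spinc$ structures are torsion on connected sums of $S^1\times S^2$, as in Example~\ref{ex:S3}), and then splicing Whitney disks and Equation~\eqref{eq:Maslovgrading} to see that $d$ is independent of $\psi$, $\x$, and $\y$. Note that this already proves the lemma: once $d$ is a constant, every $\y$ occurring in $f_{\H^1,\L,\s}(\x)$ satisfies $\agr(\y) = \agr(\x) + d$ (the contributing triangles have $\mu = n_w = 0$), so the image is $\agr$-homogeneous and the relative grading $\gr$ is preserved. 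The statement claims nothing about the absolute grading.

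The genuine gap is your ``main step'': the claim that $d = 0$, i.e.\ that the 2-handle map preserves the absolute grading, is false in general, and the proposed justification does not work. The shift of the 2-handle piece is $\bigl(c_1(\s)^2 - 2\chi - 3\sigma\bigr)/4$ computed on $X_2$ (or $X_\triangle$), not on $X$, and $H_1(X)=H_2(X)=0$ does not make it vanish: $\chi(X_2) = \ell$, and only the sum of the shifts of the 1-, 2-, and 3-handle pieces over all of $X$ cancels. A cancelling 1-/2-handle pair presenting $S^3 \times I$ already gives a counterexample: the 1-handle map $\HFh(S^3) \to \HFh(S^1\times S^2)$ sends the degree-$0$ generator to a homogeneous generator of degree $\pm 1/2$, while the composite is the product cobordism map $\Id_{\HFh(S^3)}$ of degree $0$, so the 2-handle map must shift $\agr$ by $\mp 1/2 \neq 0$; for the same reason no computation with the explicit small triangle of Lemma~\ref{lem:replacepsi} can yield $d = 0$. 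This is precisely why the paper proves only constancy of $d$ here and defers the absolute statement to the composite: in the proof of Theorem~\ref{thm:homologicalgrading}, $f_\CC$ shifts $\agr$ by a constant $e$, and $e = 0$ because $f_\CC$ induces $\Id_{\HFh(S^3)}$ on total homology, which is degree-preserving by \cite[Lemma~3.4]{OSz14}. So you should drop the claim $d=0$ (it is both unnecessary for the lemma and wrong for the individual 2-handle map); what remains of your argument is correct and coincides with the paper's proof.
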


\begin{proof}
For $\x \in \CFh(\H^1)$, $\y \in \CFh(\H^1_\L)$, and $\psi \in \pi_2(\x, \theta, \y)$
such that $\s(\psi) = \s$, let
\begin{equation}
\label{eq:defd}
d = \agr(\y) - \agr(\x) + \mu(\psi) - 2 n_w(\psi).
\end{equation}

First, we check that $d$ is independent of $\psi$. As in the proof of
Lemma~\ref{lem:2handles1}, it suffices to show that, for every triply periodic domain $\mc P$,
\begin{equation}
\label{eq:minw}
\mu(\mc P) = 2 n_w(\mc P).
\end{equation}
Since every triply periodic domain is the sum of doubly periodic domains
by Proposition~\ref{prop:triplyperiodic}, it is sufficient to prove
Equation~\eqref{eq:minw} in the case of doubly periodic domains in Heegaard
diagrams of $Y_{\a,\b}$, $Y_{\a,\d}$, and $Y_{\b,\d}$.

Consider, for example, $Y_{\a,\b}$ and $\bz \in \T_\a \cap \T_\b$,
with a periodic domain $\P \in \Pi_{\a,\b}$ based at $\bz$.
As $\s(\bz)$ extends to the cobordism $X_1$, we see that $c_1(\s(\bz))$
vanishes on the belt spheres of the 1-handles.
Furthermore, since $H(\P) \in H_2(Y)$ is a linear combination of the belt spheres,
we obtain that
\[
\langle\, c_1(\s(\bz)), H(\P) \,\rangle = 0.
\]
By the work of Ozsv\'ath and Szab\'o \cite[Theorem~4.9]{OSz} and Lipshitz~\cite[Lemma~4.10]{Lipshitz},
\[
\mu(\P) = \langle\, c_1(\s(\bz)), H(\P) \,\rangle + 2n_w(\P),
\]
and the result follows.
This proves that $d$ is independent of $\psi$.

Next, we check that $d$ is independent of $\x$ and $\y$.
Let $\td\x$ be another generator of $\CFh(\H^1)$ such that
$\s(\td\x) = \s$. Then there is a Whitney disk $\phi \in \pi_2(\td\x,\x)$,
hence $\phi \# \psi \in \pi_2(\td\x,\theta,\y)$. Then, by Equation~\eqref{eq:Maslovgrading},
\[
\begin{split}
d & = \agr(\y) - \agr(\x) + \mu(\psi) - 2 n_w(\psi) \\
& = \agr(\y) - \agr(\x) + \mu(\psi) - 2 n_w(\psi) + (\agr(\x) - \agr(\td\x) + \mu(\phi) - 2 n_w(\phi)) \\
& = \agr(\y) - \agr(\td\x) + \mu(\phi \# \psi) - 2 n_w(\phi \# \psi).
\end{split}
\]
Thus, $d$ is independent of $\x$. An analogous argument shows independence of $\y$.

Finally, all the holomorphic triangles that appear in the definition
of the map $f_{\H^1, \L,\s}$ satisfy
$\mu(\psi) = 0$ and $n_w(\psi) = 0$.
Then, it follows from Equation~\eqref{eq:defd} that $f_{\H^1, \L,\s}$
increases the absolute grading $\agr$ by $d$. In particular, it preserves the
relative grading $\gr$.
\end{proof}

\subsection{Naturality maps}

We already know from the work of Ozsv\'ath and Szab\'o~\cite{OSz}
that the naturality maps preserve the Maslov grading.
Alternatively, one can prove that the handleslide and isotopy maps
preserve the Maslov grading using the techniques of Lemma~\ref{lem:h2handles}.
The (de)stabilization maps are already isomorphisms on the chain level.

\subsection{Proof of Theorem~\ref{thm:homologicalgrading}}

As explained in Section~\ref{sec:hgetridspinc},
\[
f_\CC = f_{\H^2, \bS, \s} \circ f_{\H^1_\L, \H^2, \s} \circ f_{\H^1, \L, \s} \circ f_{\H^0_\bP, \H^1, \s} \circ f_{\H^0, \bP, \s}.
\]
All the above maps preserve the relative Maslov grading by Lemmas~\ref{lem:h1handles},
\ref{lem:h3handles} and~\ref{lem:h2handles}, so $f_\CC$ shifts the absolute Maslov
grading by some constant~$e$.
It follows that the maps induced between the spectral sequences $E^r(f_\CC)$
shift the absolute Maslov grading by the same constant~$e$. On the other hand,
the map in total homology is $\Id_{\HFh(S^3)}$, which is homogeneous of degree $0$,
so we obtain that $e=0$.
\qed

\bibliographystyle{amsplain}
\bibliography{topology}
\end{document}